\documentclass[12pt]{article}
\usepackage[mathscr]{eucal}
\usepackage{amssymb}
\usepackage{latexsym}
\usepackage{amsthm}
\usepackage{amsmath}
\usepackage[pdftex]{graphicx} %submit to arxiv
\usepackage{psfrag}
\usepackage{a4wide}

\newcommand{\R}{\mathbb{R}}

\newcommand{\A}{\boldsymbol{A}}

\newcommand{\E}{\boldsymbol{E}}
\newcommand{\J}{\boldsymbol{J}}
\newcommand{\I}{\boldsymbol{I}}
\newcommand{\D}{\boldsymbol{D}}
\newcommand{\F}{\boldsymbol{F}}
\newcommand{\M}{\boldsymbol{M}}
\newcommand{\Om}{\boldsymbol{O}}
\newcommand{\Pm}{\boldsymbol{P}}

\newcommand{\X}{\boldsymbol{X}}
\newcommand{\x}{\boldsymbol{x}}
\newcommand{\y}{\boldsymbol{y}}
\newcommand{\av}{\mathfrak{a}}
\newcommand{\vf}{\mathfrak{v}}
\newcommand{\ev}{\boldsymbol{e}}
\newcommand{\jv}{\boldsymbol{j}}
\newcommand{\vv}{\boldsymbol{v}}
\newcommand{\uv}{\boldsymbol{u}}

\newcommand{\el}{\boldsymbol{\ell}}

\newcommand{\la}{\langle}
\newcommand{\ra}{\rangle}

\newtheorem{theorem}{Theorem}[section]

\newtheorem{lemma}[theorem]{Lemma}

\theoremstyle{definition}

\newtheorem{remark}[theorem]{Remark}

\makeatletter
\@addtoreset{equation}{section}

\makeatother

\title{Embedding dimensions of matrices whose entries are indefinite distances in the pseudo-Euclidean space
% --- Analogous theory for $s$-distance sets --- 
}
\author{
Hiroshi Nozaki\thanks{Department of Mathematics Education, 
	Aichi University of Education, 
	1 Hirosawa, Igaya-cho, 
	Kariya, Aichi 448-8542, 
	Japan. {\tt hnozaki@auecc.aichi-edu.ac.jp}}
\and 
Masashi Shinohara\thanks{Faculty of Education, 
	Shiga University,
	2-5-1 Hiratsu, Otsu, Shiga 520-0862, 
	Japan.  
	{\tt shino@edu.shiga-u.ac.jp}}
\and
Sho Suda\thanks{Department of Mathematics, National Defense Academy of Japan, Yokosuka, Kanagawa 239-8686, Japan.
{\tt ssuda@nda.ac.jp}}
}

\begin{document}
\maketitle

\abstract{
A finite set of the Euclidean space is called an $s$-distance set provided the number of Euclidean distances in the set is $s$. Determining the largest possible $s$-distance set for the Euclidean space of a given dimension is challenging. This problem was solved only when dealing with small values of $s$ and dimensions. Lison\v{e}k (1997) achieved 
the classification of the largest 2-distance sets for dimensions up to $7$, using computer assistance and graph representation theory. 
In this study, we consider a theory analogous to these results of Lison\v{e}k for the pseudo-Euclidean space $\mathbb{R}^{p,q}$. We consider an $s$-indefinite-distance set in a pseudo-Euclidean space that uses the value
\[
|| \x-\y ||=(x_1-y_1)^2 +\cdots +(x_p -y_p)^2-(x_{p+1}-y_{p+1})^2-\cdots -(x_{p+q}-y_{p+q})^2
\] instead of the Euclidean distance. 
We develop a representation theory for symmetric matrices in the context of $s$-indefinite-distance sets, which includes or improves the results of Euclidean $s$-distance sets with large $s$ values. Moreover, we classify the largest possible $2$-indefinite-distance sets for small dimensions. 
 }
\vspace{0.5cm}

\noindent
\textbf{Key words}: $s$-distance set, graph representation, embedding dimension, pseudo-Euclidean space

\section{Introduction}
A finite subset $X$ of the Euclidean space $\mathbb{R}^d$ is called an {\it $s$-distance set} if the number of the distances between two distinct points in $X$ is $s$. 
The cardinality of an $s$-distance set in $\mathbb{R}^d$ or the $(d-1)$-dimensional sphere, denoted as $S^{d-1}$, has a natural upper bound, and determining the largest possible $s$-distance set for given $d$ and $s$ values is a major challenge. 
Several results on the largest $s$-distance sets have been obtained \cite{DGS77, GY18, MN11, NS21, Spre, SO20}. The largest $s$-distance sets are closely related to good combinatorial structures like designs or codes \cite{BB09, BS81,CK07,D73,DGS77,DL98}. 

For the classification of the largest $s$-distance sets for small dimensions, 
 considering the representations of graphs with $s$ relations is useful. 
For disjoint symmetric binary relations $R_0=\{(x,x) \mid x \in S\},R_1,\ldots,R_s$ on a finite set $S$ with $S\times S=\bigcup_{i=0}^s R_i$, we consider a map $f$ from $S$ to $\mathbb{R}^d$ satisfying that there exist positive real numbers 
$a_1,\ldots ,a_s$ such that the distance of $f(x)$ and $f(y)$ is $a_i$ if $(x,y) \in R_i$. 
The image $f(S)$ is  a {\it representation of $(S,\{R_i\}_{i=0}^s)$} as an $s$-distance set in $\mathbb{R}^d$. 
Finding representations in minimal dimensions for a given size of $S$ and relations is challenging in general. 
For $2$-distance sets, only $2$ minimal-dimensional representations exist for a given graph \cite{ES66}, and Lison\v{e}k \cite{L97} classified the largest 2-distance sets in $\mathbb{R}^d$ for $d\leq 7$ using computer support. 
Moreover, for $2$-distance sets, the minimal dimensions of representations are explicitly obtained from the spectral information of the adjacency matrices of a graph \cite{R10}. 
Moreover, it is possible to determine whether the representation is on a sphere \cite{NS12}. In this study, we consider extensions of such results for the pseudo-Euclidean space $\R^{p,q}$.  
The central problem addressed in this study was investigated in previous works \cite{BT,L97}.

Let $\mathbb{R}^{p,q}$ be the $(p+q)$-dimensional linear space over $\mathbb{R}$ with the bilinear form 
\[
\langle \langle \x, \y \rangle \rangle=x_1 y_1 +\cdots +x_p y_p-x_{p+1}y_{p+1}-\cdots -x_{p+q}y_{p+q}
\] 
for $\x=(x_1,\ldots,x_{p+q})^\top, \y=(y_1,\ldots,y_{p+q})^\top \in \mathbb{R}^{p+q}$. 
For a finite subset $X$ of $\R^{p,q}$, we define
\[
A(X)=\{|| \x-\y||\colon\, \x, \y  \in X, \x \ne \y \},
\]
where $||\x||=\la\la\x,\x \ra\ra $. 
A finite subset $X$ of $\R^{p,q}$ is called an {\it $s$-indefinite-distance set} if  $|A(X)|=s$. It is noteworthy that $||\x-\y||$ is not a distance function except for the Euclidean case $\R^p=\R^{p,0}$, and it may take on negative values. The $s$-indefinite-distance set in $\R^{p,0}$ is the $s$-distance set.  
Two subsets $X$ and $Y$ of $\R^{p,q}$ are {\it isomorphic} as $s$-indefinite-distance
sets if one can be transformed to the other using a function defined by $f(\boldsymbol{x})=R \boldsymbol{x}+\boldsymbol{b}$, where $R$ is an element of the pseudo-orthogonal group $ O(p,q)$ and $\boldsymbol{b}\in \R^{p,q}$.
The $s$-indefinite-distance sets in $\R^{p,q}$ are generally considered up to isometry. An $s$-indefinite-distance set in $\R^{p,q}$ is said to be {\it proper} if it is not in $\R^{k,l}$ for $(k,l)\ne (p,q)$ with $k \leq p$ and $l \leq q$. 
Bannai, Bannai, and Stanton \cite{BBS83} gave an upper bound $|X| \leq  \binom{p+s}{s}$ for a Euclidean $s$-distance set $X$ in $\R^{p,0}$. Petrov and Pohoata \cite{PP21} also proved this bound using the Croot--Lev--Pach lemma \cite{CLP17}. 
The upper bound for $s$-indefinite-distance sets in $\R^{p,q}$ is analogously obtained using the method given in \cite{PP21} provided $0 \not\in A(X)$. 
We can construct infinite $s$-indefinite-distance sets with $0 \in A(X)$ for all dimensions, except for $(p,q)= (1,1)$. An $s$-indefinite-distance set in $\R^{1,1}$ with $0\in A(X)$ is finite. 
The problem we address in this study is determining the largest possible $s$-indefinite-distance sets for $0 \not \in A(X)$. 

For this purpose, 
we would like to give the minimal-dimensions of  representations of $(S,\{R_i\}_{i=0}^s)$ in $\R^{p,q}$ using the spectral information of the relation matrices. 
For the given relations $\{R_i\}_{i=0}^s$ on $S$, 
a relation matrix $\A_i$ with respect to $R_i$ is defined as a symmetric matrix. 
The rows and columns of this matrix are indexed by $S$, and the entries in the matrix are $(\A_i)_{uv}=1$ when $(u,v)\in R_i$, and 0 otherwise. A matrix expressed by $\M=\sum_{i=0}^s c_i \A_i$ with $c_i \in \mathbb{R}$ is called a {\it dissimilarity matrix} of $(S,\{R_i\})$. 
Let $\Pm=\I-(1/|S|) \J$, where $\I$ is the identity matrix and $\J$ is the all-one matrix. When the signature (numbers of positive and negative eigenvalues) of $-\Pm \M \Pm$ is $(p,q)$, it indicates the presence of a subset $X$ of $\R^{p,q}$ whose indefinite-distance matrix $(||\x-\y||)_{\x,\y \in X}$ is $\M$ \cite{G85}. The set $X \subset \R^{p,q}$ is called a {\it representation of dissimilarity matrix $\M$}. 
Moreover, there is no subset $X$ of $\R^{p_1,q_1}$ fulfilling $(||\x-\y||)_{\x,\y \in X} =\M$ for either $p_1 < p$ or $q_1 < q$~\cite{G85}.
Roy \cite{R10} determined the exact minimal dimension $p$ from the spectral information of $\M$ for $s=2$ and $q=0$. We generalize his result to the case of any $s$, $p$, and $q$ (Theorem~\ref{thm:dim_PMP}). This result can be used to determine when the representation is on a sphere $S_{p,q}(r)=\{\x \in \mathbb{R}^{p,q} \mid \la \la \x,\x \ra \ra =r\}$. Moreover, from these results, we can extend the Lison\v{e}k algorithm to classify the largest proper (spherical)  $2$-indefinite-distance sets in $\mathbb{R}^{p,q}$ for $p+q \leq 7$. 

The remainder of this paper is organized as follows. In Section \ref{sec:2}, we prove upper bounds on the cardinality of an $s$-indefinite-distance set in $\R^{p,q}$ with $0 \not \in A(X)$, and consider the case $0 \in A(X)$. In Section \ref{sec:3}, we consider the minimal dimensions $p,q$, for which a representation of a given dissimilarity matrix $\M=\sum c_i \A_i$ is feasible. In Section \ref{sec:4}, we classify the largest proper $2$-indefinite-distance sets in $\R^{p,q}$ for $p+q\leq 7$ using an extension of the Lison\v{e}k algorithm. In Section \ref{sec:5}, we provide a necessary and sufficient condition for a given indefinite-distance matrix $\M$ to be spherical. 
Moreover, we classify the largest proper spherical $2$-indefinite-distance sets in $\R^{p,q}$ for $p+q\leq 7$. 

\section{Upper bounds for $s$-indefinite-distance sets in $\mathbb{R}^{p,q}$} \label{sec:2}

Petrov and Pohoata \cite{PP21} provided a simple proof of the upper bounds $|X| \leq \binom{d+s}{s}$ for Euclidean $s$-distance sets in $\mathbb{R}^d$ using the Croot--Lev--Pach lemma~\cite{CLP17}. 
The upper bound for $s$-indefinite-distance sets in $\mathbb{R}^{p,q}$ can be obtained using the method described in \cite{PP21}.  
Theorem~\ref{thm:key} is a key theorem presented in \cite{PP21}. 
The pair $(r,s)$ is called the {\it signature}  of symmetric matrix $\M$, where $r$ ({\it resp}.\ $s$) is the number of positive ({\it resp}.\ negative) eigenvalues of $\M$ counting multiplicities.  
Let ${\rm sign}(\M)$ denote the signature of $\M$. 
\begin{theorem}[{\cite{PP21}}]\label{thm:key}
Let $V$ be a finite-dimensional vector space over a field $\mathbb{F}$, and $X$ be a finite set in $V$. Let $s$ be a nonnegative integer. Let $F(\x,\y)$ be a $2\cdot \dim V$-variate polynomial with coefficients in $\mathbb{F}$ and a degree no greater than $2s+1$. 
Let the matrix $\M=(F(\x,\y))_{\x,\y \in X}$, and $(r_+,r_-)={\rm sign} (\M)$. Let $\dim_s(X)$ be the dimension of the space of polynomials of degree at most $s$ considered as functions on $X$. Then the following hold. 
\begin{enumerate}
\item ${\rm rank}(\M) \leq 2 \dim_s(X)$.   
\item If $\mathbb{F}=\mathbb{R}$, then $\max\{r_+,r_-\} \leq \dim_s(X)$. 
\end{enumerate} 
\end{theorem}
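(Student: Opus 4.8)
The plan is to exploit the single constraint that $F$ has degree at most $2s+1$: in any monomial $\x^\alpha\y^\beta$ appearing in $F$ one has $|\alpha|+|\beta|\le 2s+1$, so at least one of $|\alpha|\le s$ or $|\beta|\le s$ must hold (otherwise $|\alpha|+|\beta|\ge 2s+2$). First I would partition the monomials of $F$ accordingly and write
\[
F(\x,\y)=\sum_{|\alpha|\le s}\x^\alpha G_\alpha(\y)+\sum_{|\beta|\le s}H_\beta(\x)\y^\beta,
\]
where each monomial with $|\alpha|\le s$ is absorbed into the first sum and every remaining monomial (necessarily with $|\beta|\le s$) into the second. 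Evaluating on $X$ splits the matrix as $\M=\M_1+\M_2$, where $(\M_1)_{\x\y}=\sum_{|\alpha|\le s}\x^\alpha G_\alpha(\y)$ and $(\M_2)_{\x\y}=\sum_{|\beta|\le s}H_\beta(\x)\y^\beta$.

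For $(1)$ I would let $U\subseteq\mathbb{F}^X$ be the span of the evaluation vectors $(\x^\alpha)_{\x\in X}$ with $|\alpha|\le s$; by definition $\dim U=\dim_s(X)$. Every column of $\M_1$ is an $\mathbb{F}$-combination of the vectors $(\x^\alpha)_{\x}$, so the column space of $\M_1$ lies in $U$ and $\mathrm{rank}(\M_1)\le\dim_s(X)$. Symmetrically, every row of $\M_2$ is a combination of the vectors $(\y^\beta)_{\y}$ with $|\beta|\le s$, so $\mathrm{rank}(\M_2)\le\dim_s(X)$. Subadditivity of rank then gives $\mathrm{rank}(\M)\le 2\dim_s(X)$, and this part works over any field.

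For $(2)$, over $\mathbb{R}$, the key point is to upgrade this to a bound on each inertia index separately, and the crux is the following isotropy claim: the quadratic form $\wv\mapsto\wv^\top\M\wv$ vanishes identically on $U^\perp$. Indeed, for $\wv\in U^\perp$ the vector $\M_1\wv$ lies in the column space of $\M_1$, hence in $U$, so $\wv^\top\M_1\wv=0$; likewise the row-space bound on $\M_2$ gives $\M_2^\top\wv\in U$, so $\wv^\top\M_2\wv=(\M_2^\top\wv)^\top\wv=0$, and adding these yields $\wv^\top\M\wv=0$. Now if $Z$ is any subspace on which $\M$ is positive definite, then $Z\cap U^\perp=\{0\}$ by the isotropy claim, whence $\dim Z\le\dim U=\dim_s(X)$ via the dimension inequality $\dim(Z\cap U^\perp)\ge\dim Z+\dim U^\perp-|X|$. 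Taking $Z$ of maximal dimension yields $r_+\le\dim_s(X)$, and applying the same argument to $-\M$ gives $r_-\le\dim_s(X)$.

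I expect the main obstacle to be organizing the splitting so that $\M_1$ is controlled through its \emph{columns} and $\M_2$ through its \emph{rows}; this asymmetry is exactly what makes $U^\perp$ totally isotropic, and is the mechanism that turns the rank bound $2\dim_s(X)$ of $(1)$ into the sharper inertia bound $\dim_s(X)$ of $(2)$. A secondary point worth stating carefully is that $\dim_s(X)$ really equals $\dim U$, i.e.\ that the evaluation map identifies degree-$\le s$ polynomials modulo those vanishing on $X$ with the span of the monomial evaluation vectors.
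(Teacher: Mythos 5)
Your proposal is correct, and it is essentially the proof of the cited source: the paper itself states Theorem~\ref{thm:key} without proof (quoting it from \cite{PP21}), and your argument --- splitting each monomial $\x^\alpha\y^\beta$ of degree at most $2s+1$ according to whether $|\alpha|\le s$ or $|\beta|\le s$, bounding $\M_1$ by columns and $\M_2$ by rows for the rank bound, and then observing that $U^\perp$ is totally isotropic for the quadratic form of $\M$ to get the inertia bound --- is exactly the Croot--Lev--Pach decomposition \cite{CLP17} together with the refinement of Petrov and Pohoata \cite{PP21}.
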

\begin{theorem}\label{thm:absolute_bound}
Let $X$ be an $s$-indefinite-distance set in $\mathbb{R}^{p,q}$. 
If $0 \not\in A(X)$ holds, then 
\[
|X| \leq \binom{p+q+s}{s}. 
\]
\end{theorem}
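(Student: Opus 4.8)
The plan is to mimic the Petrov--Pohoata argument from the Euclidean case, but to exploit part (2) of Theorem~\ref{thm:key} rather than part (1), so as to avoid losing a factor of two. Write $A(X)=\{a_1,\ldots,a_s\}$ for the $s$ distinct indefinite-distance values; the hypothesis $0\not\in A(X)$ guarantees that every $a_i$ is nonzero, and this will be the crucial point.

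First I would introduce the polynomial
\[
F(\x,\y)=\prod_{i=1}^s\bigl(||\x-\y||-a_i\bigr)
\]
in the $2(p+q)$ coordinates of $\x$ and $\y$. Since $||\x-\y||=\la\la\x-\y,\x-\y\ra\ra$ is a quadratic form in these coordinates, $F$ has degree $2s\leq 2s+1$, so Theorem~\ref{thm:key} applies with $V=\R^{p+q}$ and $\dim V=p+q$.

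Next I would examine the matrix $\M=(F(\x,\y))_{\x,\y\in X}$. For distinct $\x,\y$ the value $||\x-\y||$ equals some $a_i$, so the corresponding factor vanishes and $F(\x,\y)=0$; hence every off-diagonal entry is zero. On the diagonal, $||\x-\x||=0$, so $F(\x,\x)=\prod_{i=1}^s(-a_i)=(-1)^s\prod_{i=1}^s a_i$, which is nonzero precisely because $0\not\in A(X)$. Thus $\M=(-1)^s\bigl(\prod_i a_i\bigr)\I$ is a nonzero scalar multiple of the identity. In particular $\M$ is definite, so its signature $(r_+,r_-)$ is either $(|X|,0)$ or $(0,|X|)$, and $\max\{r_+,r_-\}=|X|$. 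Applying part (2) of Theorem~\ref{thm:key} then yields $|X|=\max\{r_+,r_-\}\leq\dim_s(X)$, and since $\dim_s(X)$ cannot exceed the dimension $\binom{p+q+s}{s}$ of the full space of polynomials of degree at most $s$ in $p+q$ variables, the claimed bound follows.

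The only real subtlety is the observation that $\M$ is definite rather than merely of large rank: invoking part (1) would give only $\mathrm{rank}(\M)\leq 2\dim_s(X)$ and hence the weaker estimate $|X|\leq 2\binom{p+q+s}{s}$. It is the nonvanishing of the diagonal entries --- equivalently the assumption $0\not\in A(X)$, which makes $\M$ a \emph{definite} scalar matrix --- that lets the sharper bound go through. I expect this recognition, together with checking that the degree of $F$ stays within the admissible range $2s+1$, to be the main thing to get right; the remaining polynomial-dimension count is routine.
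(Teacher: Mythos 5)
Your proof is correct and takes essentially the same approach as the paper: the paper uses the very same product polynomial, merely normalized by the factors $1/\alpha$ so that $\M=(F(\x,\y))_{\x,\y\in X}$ is exactly the identity matrix, and then applies part (2) of Theorem~\ref{thm:key} together with $\dim_s(X)\leq\binom{p+q+s}{s}$ just as you do. Your variant, leaving $\M$ as a nonzero scalar multiple of the identity and noting $\max\{r_+,r_-\}=|X|$, is an immaterial difference.
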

\begin{proof}
Define the polynomial $F$ of degree $2s$ as   
\[
F(\x,\y)=\prod_{\alpha \in A(X)}(\alpha-||\x-\y||)/\alpha  
\]
for $0 \not\in A(X)$. 
This polynomial satisfies $F(\x,\x)=1$ and $F(\x,\y)=0$ for distinct $\x , \y \in X $. 
%The polynomial $F$ is properly defined as $0 \not\in A(X)$. 
Here, $\M=(F(\x,\y))_{\x,\y \in X}$ is the identity matrix. Therefore,  Theorem~\ref{thm:key} implies that
\[
|X| = r_+ \leq \dim_s(X) \leq \dim_s(\mathbb{R}^{p+q})=\binom{p+q+s}{s}. \qedhere
\]
\end{proof}

If $0 \in A(X)$ and $(p,q)\ne (1,1)$, then 
there exists a proper $s$-indefinite-distance set $X$ in $\mathbb{R}^{p,q}$ with $|X|=\infty$ . This can be expressed as follows.
We can assume that $p\geq 2$ and $q\geq 1$ because the constructions are similar for $p=1$ and $q\geq 2$. 
Let $L_s$ be a Euclidean $s$-distance set in $\mathbb{R}^1$. For instance, $L_s$ may be 
a set of $s + 1$ points, wherein two consecutive points maintain an equal interval. 
We define $L_0$ to be an empty set.  Since $L_s$ is Euclidean, $0 \not\in A(L_s)$. 
Let $V={\rm Span}_\mathbb{R}\{\boldsymbol{e}_2+\boldsymbol{e}_{p+1}\}$, where $\boldsymbol{e}_i$ is the vector with $i$-th entry 1, and 0 otherwise. 
It is noteworthy that $A(V)=\{0\}$. Let $L_{s}'=\{(\ell,0,\ldots,0) \in \mathbb{R}^{p,q} \mid \ell \in L_{s} \}$ and \[
X=\{ \el+\vv \in \mathbb{R}^{p,q} \mid \el \in  L_{s-1}' , \vv \in V\}.
\] 
Therefore, for any $\x=\el+\vv, \x'=\el'+\vv' \in X$, 
\[
||\x-\x'||=||\el-\el'||+||\vv-\vv'|| \in A(L_{s-1}) \cup \{0\},
\]
and $|A(X)|=s$. This set $X$ is an infinite $s$-indefinite-distance set in $\mathbb{R}^{p,q}$ with distance $0$. 

The set $\{(x,y) \in \mathbb{R}^{1,1} \mid x=y \}$ is an infinite $1$-indefinite-distance set in $\R^{1,1}$ with $A(X)=\{0\}$.  
For $s\geq 2$, the cardinality of an $s$-indefinite-distance set in $\mathbb{R}^{1,1}$ is finite even if $0 \in A(X)$. 
This is substantiated below.  
For $\x=(x_1,x_2) \in \mathbb{R}^{1,1}$ and  $a\in \mathbb{R}$, we define 
\[D_{\x}(a)=\{\x'\in \mathbb{R}^{1,1}\colon\, ||\x-\x'||=a\}.\]  
%If $a=0$, then $D_{\x}(a)$ is the union of two lines $\ell ^{\pm}$ with the  intersection $(x_1,x_2)$, and whose slopes are $\pm 1$. 
%If $a\ne 0$, then $D_{\x}(a)$ is hyperbolic whose asymptotes are $\ell ^{\pm}$. 
If $a=0$, then $D_{\x}(a)$ is the union of two lines $\ell ^{\pm}$ with slopes $\pm 1$ and intersecting at $(x_1,x_2)$. 
If $a\ne 0$, $D_{\x}(a)$ is hyperbolic and its asymptotes are $\ell ^{\pm}$.
It should be noted that $D_{\x}(a)\cap D_{\x'}(a')$ is infinite 
for distinct $\x, \x'\in \mathbb{R}^{1,1}$ 
if and only if $a=a'=0$ and $||\x-\x'||=0$. 
%Let $X\subset \mathbb{R}^{1,1}$ be an $s$-indefinite-distance set for some $s<\infty$. 
Suppose any $\x, \x'\in X$ satisfy $||\x-\x'||\ne 0$. 
Then $|X|$ is finite since 
$$X\setminus \{\x, \x'\}\subset 
\bigcup _{a, a'\in A(X)} (D_{\x}(a) \cap D_{\x'}(a')).$$
Therefore, an $s$-indefinite-distance set $X\subset \mathbb{R}^{1,1}$ for $s\ge 2$
is finite even for $0\in A(X)$.

\section{Embedding dimensions of dissimilarity matrices} \label{sec:3}
Let $V$ be a finite set, with $|V|=n$. 
We call $\mathscr{R}=\{R_0,R_1,\ldots, R_s\}$ {\it  relations} on $V$ if $\mathscr{R}$ is a partition of $V\times V$, $R_0=\{(u,u) \mid u \in V \}$, and $R_i$ is a nonempty symmetric relation  for each $i$. 
In this paper, we call $(V,\mathscr{R})$ an {\it $s$-relation graph}. 
A matrix $\A_i$ is a {\it relation matrix} with respect to $R_i$ if $\A_i$ is a symmetric $(0,1)$-matrix indexed by $V$ with $(u,v)$-entries
\[
\A_i(u,v)=\begin{cases}
1 \text{ if $(u,v) \in R_i$},\\
0 \text{ otherwise}.
\end{cases}
\]
 A matrix $\D_{\mathscr{R}}(\av)= \D(\av)=\sum_{i=1}^s a_i \A_i$ is called a {\it dissimilarity matrix on $(V,\mathscr{R})$} for  $\av=(a_1,\ldots, a_s)^\top \in \R^s$.  

For $X \subset \R^{p,q}$, the matrix $\D(X)=(||\x-\y||)_{\x,\y \in X}$ is called an {\it indefinite-distance matrix of $X$}. 
A dissimilarity matrix $\D_{\mathscr{R}}(\av)$ on $(V,\mathscr{R})$ is {\it representable} in $\R^{p,q}$ if there exists $X\subset \R^{p,q}$ such that  $\D_{\mathscr{R}}(\av)=\D(X)$. This set $X$ is called a {\it representation of $\D_{\mathscr{R}}(\av)$}. 
An $s$-relation graph $(V,\mathscr{R})$ is {\it representable in $\R^{p,q}$} if there exists $\av \in \mathbb{R}^s$ such that $\D_{\mathscr{R}}(\av)=\D(X)$ for some $X \subset \R^{p,q}$. 
Moreover, the set $X$ is called a {\it representation of $(V,\mathscr{R})$ in $\R^{p,q}$}. 
Let $\X$ be an $n\times (p+q)$ matrix whose rows
 are the coordinates of the $n$ points of $X\subset \R^{p,q}$. 
The {\it dimensionality} of $\D_{\mathscr{R}}(\av)$ is the least ${\rm rank}(\X)$ in all representations $X$ of  $\D_{\mathscr{R}}(\av)$. 

Let $\jv$ be the all-one vector. For a symmetric matrix $\M$ and a vector $\el$ with $\el^\top \jv=1$, let $\F_{\M}(\el)$ be the symmetric matrix defined as 
\[
\F_{\M}(\el)=- (\I-\jv \el^{\top})\M (\I-\el \jv^{\top}),
\]
where $\I$ is the identity matrix. 
Note that the matrix $\I-\el \jv^{\top}$ is a projection matrix onto 
$\jv^\perp$. From $(\I-\el \jv^{\top})\el=0$, the vector $\el$ is an eigenvector of $\F_{\M}(\el)$ with an eigenvalue of 0.
Direct calculations show that \[
\F_{\D(X)}(\el)=
(-2\la\la \x-\uv,\y-\uv\ra \ra )_{\x,\y \in X},\]
 where $\uv=
\sum_{\x \in X} \el_{\x}\x$ and $\el_{\x}$ is the $\x$-th entry of $\el$. 
From the diagonalization of a symmetric matrix $\M$ with signature $(r,s)$,  we obtain the coordinates of the points of $X\subset \R^{p,q}$ such that $\M=(\la\la \x,\y \ra\ra)_{\x,\y \in X}$.  
Gower \cite{G85} showed that the signature of 
$\F_{\D_\mathscr{R}(\av)}(\el)$ is the same for all 
$\el$ with $\el^\top \jv=1$ and obtained the following theorem: 
\begin{theorem}[Theorems 8 and 9 in \cite{G85}] \label{thm:G}
Let $(p,q)$ be the signature of $\F_{\D_\mathscr{R}(\av)}(\el)$, which does not depend on the choice of $\el$.  Then the dimensionality of $\D_\mathscr{R}(\av)$ is $p+q$.  Moreover, 
there is no representation of $\D_\mathscr{R}(\av)$ in $\R^{s,t}$ that satisfies $s< p$ or $t<q$.  
%the Gram matrix of any representation of $\D_\mathscr{R}(\av)$ is equal to $\F_{\D_\mathscr{R}(\av)}(\el)$ with some $\el$ after some isometric transformation.  
\end{theorem}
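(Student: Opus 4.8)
The plan is to exploit the correspondence between representations of $\M=\D_{\mathscr{R}}(\av)$ and Gram matrices of centred point configurations. Fix $\el$ with $\el^\top\jv=1$ and write $\Pm_\el=\I-\el\jv^\top$, so that $\F_\M(\el)=-\Pm_\el^\top\M\Pm_\el$. The displayed identity says that for any $X\subset\R^{p,q}$ with $\D(X)=\M$, the matrix $\F_\M(\el)$ is a fixed positive scalar multiple of the Gram matrix $\bigl(\la\la\x-\uv,\y-\uv\ra\ra\bigr)_{\x,\y\in X}$ of the points centred at $\uv=\sum_\x\el_\x\x$; in particular the two matrices share a signature. I would prove the theorem in three steps: (i) the signature of $\F_\M(\el)$ is independent of $\el$; (ii) a representation in $\R^{p,q}$ whose coordinate matrix has rank $p+q$ exists, giving the upper bound on the dimensionality; and (iii) every representation, in any $\R^{s,t}$, forces $s\ge p$ and $t\ge q$, which simultaneously yields the lower bound on the dimensionality and the final ``moreover'' assertion.

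For (i), given two admissible vectors $\el,\el'$, set $\Sm=\I+(\el-\el')\jv^\top$. Because $\jv^\top(\el-\el')=1-1=0$, the matrix determinant lemma gives $\det\Sm=1$, so $\Sm$ is invertible, and the same identity $\jv^\top(\el-\el')=0$ yields $\Pm_\el\Sm=\Pm_{\el'}$ after a one-line computation. Consequently
\[
\F_\M(\el')=-\Pm_{\el'}^\top\M\Pm_{\el'}=\Sm^\top\bigl(-\Pm_\el^\top\M\Pm_\el\bigr)\Sm=\Sm^\top\F_\M(\el)\Sm,
\]
and Sylvester's law of inertia forces $\F_\M(\el)$ and $\F_\M(\el')$ to have the same signature $(p,q)$. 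Taking $\el=(1/n)\jv$ identifies $(p,q)$ with the signature of $-\M$ restricted to $\jv^\perp$.

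For (ii), diagonalise the symmetric matrix $\tfrac12\F_\M(\el)$, which has signature $(p,q)$, as $\X\I_{p,q}\X^\top$ with $\I_{p,q}=\mathrm{diag}(\I_p,-\I_q)$ and $\X$ of full column rank $p+q$; reading the rows of $\X$ as points of $\R^{p,q}$ gives a configuration whose Gram matrix (with respect to the indefinite form) is $\tfrac12\F_\M(\el)$. Since $\Pm_\el\el=0$, the vector $\el$ lies in the kernel of $\F_\M(\el)$, and this forces the centroid $\uv=\sum_\x\el_\x\x$ of the configuration to be the origin; hence its centred Gram matrix is again $\tfrac12\F_\M(\el)$, so $\F_{\D(X)}(\el)=\F_\M(\el)$. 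Then $\Pm_\el^\top(\D(X)-\M)\Pm_\el=0$, meaning $\D(X)-\M$ vanishes on $\jv^\perp\times\jv^\perp$ and is therefore of the form $\jv\boldsymbol{c}^\top+\boldsymbol{c}\jv^\top$; comparing diagonals and using that both $\D(X)$ and $\M$ are hollow gives $\boldsymbol{c}=0$, so $\D(X)=\M$. This representation has coordinate rank $p+q$, whence the dimensionality is at most $p+q$.

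For (iii), let $X\subset\R^{s,t}$ be an arbitrary representation, centre it, and form the Gram matrix $G=\X\I_{s,t}\X^\top$ of its centred coordinates, which equals $\tfrac12\F_\M(\el)$ and so has signature $(p,q)$. The crux is the inertia inequality: for $G=\X\I_{s,t}\X^\top$ the positive index of $G$ is at most the positive index $s$ of $\I_{s,t}$, and likewise the negative index of $G$ is at most $t$. This follows by taking a maximal subspace $U$ on which $G$ is positive definite and noting that $\X^\top$ maps $U$ injectively onto a subspace of $\R^{s,t}$ on which the indefinite form is positive definite, so $\dim U\le s$; the negative case is symmetric. Hence $s\ge p$ and $t\ge q$, which is exactly the ``moreover'' statement, and $\mathrm{rank}(G)=p+q$ bounds the affine dimension, and therefore the coordinate rank, of $X$ from below by $p+q$. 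Combined with (ii), this shows the dimensionality equals $p+q$. I expect this inertia inequality to be the main obstacle, since it is precisely what separates the minimal signature $(p,q)$ from the minimal total dimension $p+q$; the remaining steps are the indefinite-form analogue of classical multidimensional scaling.
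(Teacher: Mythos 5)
The paper never proves this theorem: it is imported wholesale from Gower \cite{G85} (Theorems 8 and 9), so there is no internal proof to compare against, only the cited source. Your proposal is a correct, self-contained proof, and in substance it is the standard multidimensional-scaling argument underlying Gower's result, transported to indefinite signature: (i) the congruence $\F_{\M}(\el')=\Sm^\top\F_{\M}(\el)\Sm$ with $\Sm=\I+(\el-\el')\jv^\top$ and $\Pm_{\el'}=\Pm_\el\Sm$, plus Sylvester's law, correctly gives independence of $\el$; (ii) the factorization $\tfrac12\F_{\M}(\el)=\X\I_{p,q}\X^\top$, the observation that $\el\in\ker\F_{\M}(\el)$ forces $\X^\top\el=0$, and the hollowness argument ($\D(X)-\M=\jv\boldsymbol{c}^\top+\boldsymbol{c}\jv^\top$ with zero diagonal, hence $\boldsymbol{c}=0$) correctly produce a representation of rank $p+q$; (iii) the inertia inequality (positive index of $\X\I_{s,t}\X^\top$ at most $s$, negative index at most $t$) correctly yields both the lower bound on the dimensionality and the ``moreover'' clause. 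Two minor remarks. First, you silently correct a sign error in the paper: the displayed identity should read $\F_{\D(X)}(\el)=\bigl(2\la\la \x-\uv,\y-\uv\ra\ra\bigr)_{\x,\y\in X}$ with a plus sign, as your ``positive scalar multiple'' requires and as the paper's own convention (signature of $-\Pm\M\Pm$ equal to $(p,q)$ corresponds to $X\subset\R^{p,q}$) confirms; with the printed minus sign the theorem itself would be false as stated. Second, in step (ii) the rows of $\X$ need not be pairwise distinct when $\M$ has zero off-diagonal entries, so the constructed representation could degenerate to a multiset; this ambiguity is inherent in the statement rather than a flaw in your argument (the paper acknowledges exactly this issue in its concluding remarks), and it disappears under the standing assumption $0\notin A(X)$ used throughout the paper.
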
  
The smallest dimension given in Theorem~\ref{thm:G} as ${\rm sign}(\F_{\D_\mathscr{R}(\av)}(\el))=(p,q)$ is called the {\it embedding dimension} of a dissimilarity matrix $\D_\mathscr{R}(\av)$.  
The signature of $\F_{\D_\mathscr{R}(\av)}(\el)$ can be calculated
using the eigenvalues and main angles of  $\D_\mathscr{R}(\av)$ from Theorem~\ref{thm:dim_PMP}.  
Let $\M$ be a real symmetric matrix of order $n$. Let $\tau_i$ be an eigenvalue of $\M$ with eigenspace $E_i$ and an orthogonal projection matrix $\E_i$ onto $E_i$ for $i=1,\ldots, r$. 
The {\it main angle} of $\tau_i$ (or $E_i$) is defined as 
\[
\beta_i=\frac{1}{\sqrt{n}}\sqrt{ (\E_i \jv)^\top (\E_i \jv)}. 
\]
It is noteworthy that $0\leq \beta_i \leq 1$ and $\sum_{i=1}^r \beta_i^2=1$ hold, and $\beta_i=0$ if and only if $E_i \subset \jv^\perp$. 
An eigenvalue $\tau_i$ is {\it main} if $\beta_i \ne 0$. 
For a main eigenvalue $\tau_i$,  we deduce that
\[
n\beta_i^2=\max_{\vv \in E_i: \vv^\top \vv=1} (\vv^\top \jv)^2=
\max_{\vv \in E_i}\frac{(\vv^\top \jv)^2}{\vv^\top \vv} 
\]
and 
\begin{equation} \label{eq:mainangle}
\frac{1}{n\beta_i^2}= \min_{\vv \in E_i} \frac{\vv^\top \vv}{(\vv^\top \jv)^2}=
\min_{\vv \in E_i:\, \vv^\top \jv=1} \vv^\top \vv.  
\end{equation}

The main theorem of this section is as follows:  
\begin{theorem} \label{thm:dim_PMP}
Let $\M$ be a real symmetric matrix with distinct main eigenvalues
$\lambda_1, \ldots,\lambda_r$.
Let $\beta_i$ be the main angle of $\lambda_i$ for $i \in \{1,\ldots, r\}$. 
Let $(p,q)$ be the signature of $\M$.
Let $E_0$ be the eigenspace associated with eigenvalue $0$ of $\M$, and $E_0=\emptyset$ if $0$ is not an eigenvalue of $\M$.   Then the signature of $\F_{\M}(\el)$ is given as follows. 
\begin{enumerate}
\item If $E_0 \subset \jv^\perp$ and $\sum_{i=1}^r \beta_i^2/\lambda_i=0$, 
then the signature of $\F_{\M}(\el)$ is $(q-1,p-1)$. 
\item If $E_0 \subset \jv^\perp$ and $\sum_{i=1}^r \beta_i^2/\lambda_i>0$, 
then the signature of $\F_{\M}(\el)$ is $(q,p-1)$. 
\item If $E_0 \subset \jv^\perp$ and $\sum_{i=1}^r \beta_i^2/\lambda_i<0$, 
then the signature of $\F_{\M}(\el)$ is $(q-1,p)$. 
\item If $E_0 \not\subset \jv^\perp$, then the signature of $\F_{\M}(\el)$ is $(q,p)$.
\end{enumerate}

\end{theorem}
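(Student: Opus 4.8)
The plan is to exploit the fact, due to Gower and recalled before Theorem~\ref{thm:G}, that $\mathrm{sign}(\F_{\M}(\el))$ is independent of the choice of $\el$ with $\el^\top\jv=1$; I would therefore specialize to $\el=\jv/n$, for which $\I-\jv\el^\top=\I-\el\jv^\top=\Pm:=\I-(1/n)\J$ is the orthogonal projection onto $\jv^\perp$, so that $\F_{\M}(\el)=-\Pm\M\Pm$. Since $\Pm\jv=0$, the vector $\jv$ lies in the kernel of $\F_\M(\el)$, and on the invariant hyperplane $W:=\jv^\perp$ the matrix $-\Pm\M\Pm$ represents the bilinear form $-\M|_W$. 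Hence if the restriction $\M|_W$ has inertia $(p',q',n_0')$ (numbers of positive, negative, and zero eigenvalues), then $\mathrm{sign}(\F_\M(\el))=(q',p')$, the coordinate swap coming from the overall minus sign. The whole problem is thus reduced to computing the inertia of the compression of $\M$ to the hyperplane $\jv^\perp$.

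To compute $\mathrm{In}(\M|_W)$ I would introduce the bordered matrix $\B=\begin{pmatrix}\M & \jv\\ \jv^\top & 0\end{pmatrix}$ together with two standard inertia identities. First, since $\jv\neq 0$, the border lemma gives $\mathrm{In}(\B)=\mathrm{In}(\M|_W)+(1,1,0)$. Second, the Haynsworth inertia additivity formula, in its Moore--Penrose form, evaluates $\mathrm{In}(\B)$ through the Schur complement of $\M$: whenever $\jv\in\mathrm{range}(\M)$ one has $\mathrm{In}(\B)=\mathrm{In}(\M)+\mathrm{In}(-\jv^\top\M^+\jv)$. The bridge to the hypotheses of the theorem is the spectral identity $\jv^\top\M^+\jv=\sum_{\tau_i\neq 0}\tau_i^{-1}\|\E_i\jv\|^2=n\sum_{i=1}^r\beta_i^2/\lambda_i$, which follows from $\M^+=\sum_{\tau_i\neq0}\tau_i^{-1}\E_i$ together with $n\beta_i^2=\|\E_i\jv\|^2$ and the vanishing of $\E_i\jv$ at non-main eigenvalues; in particular $\mathrm{sign}(\jv^\top\M^+\jv)=\mathrm{sign}(\sum_{i=1}^r\beta_i^2/\lambda_i)$.

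With these tools the case analysis becomes immediate. When $E_0\subset\jv^\perp$ we have $\jv\in\mathrm{range}(\M)$, and inserting the sign of $\jv^\top\M^+\jv$ into the two identities yields $\mathrm{In}(\M|_W)$ equal to $(p-1,q-1,n_0+1)$, $(p-1,q,n_0)$, or $(p,q-1,n_0)$ according as $\sum_{i}\beta_i^2/\lambda_i$ is zero, positive, or negative; swapping the first two coordinates reproduces cases (1)--(3). When $E_0\not\subset\jv^\perp$ the range condition fails, and I would instead compute $\mathrm{In}(\B)$ directly: writing $\jv=\jv_r+\wv$ with $\wv\in E_0\setminus\{0\}$ and diagonalizing $\M$, a shear clearing the off-diagonal coupling produces a single hyperbolic pair from $\wv$, giving $\mathrm{In}(\B)=(p+1,q+1,n_0-1)$ and hence $\mathrm{In}(\M|_W)=(p,q,n_0-1)$, which yields case (4).

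The main obstacle is the singular case. The classical Haynsworth additivity is usually stated for invertible $\M$, so I must justify its pseudo-inverse version under the exact hypothesis $\jv\in\mathrm{range}(\M)$ (equivalently $E_0\subset\jv^\perp$), and separately dispose of $\jv\notin\mathrm{range}(\M)$ by the explicit congruence above. Establishing the border lemma $\mathrm{In}(\B)=\mathrm{In}(\M|_W)+(1,1,0)$ cleanly, valid irrespective of the rank of $\M$, is the other point requiring care; it is most safely proved by choosing a basis of $\R^n$ whose first $n-1$ vectors span $W$ and reading off the two extra hyperbolic directions contributed by the border.
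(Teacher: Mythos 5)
Your proposal is correct, but it takes a genuinely different route from the paper's. The paper does not pass through the compression of $\M$ to $\jv^\perp$ at all: it fixes $\el=\vv_1$, the vector along $\E_1\jv$ normalized by $\jv^\top\vv_1=1$ (with $\lambda_1$ the smallest, negative, main eigenvalue), and computes the spectrum of $-\F_{\M}(\vv_1)$ directly. Non-main eigenvectors and the parts of main eigenspaces inside $\jv^\perp$ are shown to survive unchanged; the remaining $r-1$ eigenvalues are the roots of a secular polynomial $g(\mu)$ that strictly interlace $\lambda_2<\cdots<\lambda_r$, and the one sign left undetermined by interlacing is extracted from $g(0)g(\lambda_k)$, which is proportional to $\sum_i\beta_i^2/\lambda_i$; case (4) is then dispatched by rerunning the same computation when $0$ is main. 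Your argument instead specializes to $\el=\jv/n$, reduces everything to the inertia of $\M|_{\jv^\perp}$, and computes that inertia by bordering: the border lemma $\mathrm{In}(\B)=\mathrm{In}(\M|_{\jv^\perp})+(1,1,0)$ with $\B=\bigl(\begin{smallmatrix}\M & \jv\\ \jv^\top & 0\end{smallmatrix}\bigr)$, the Moore--Penrose form of Haynsworth additivity $\mathrm{In}(\B)=\mathrm{In}(\M)+\mathrm{In}(-\jv^\top\M^{+}\jv)$ valid exactly when $\jv\in\mathrm{range}(\M)$, i.e.\ $E_0\subset\jv^\perp$, and the bridge identity $\jv^\top\M^{+}\jv=n\sum_i\beta_i^2/\lambda_i$; the non-range case is settled by the explicit congruence that turns the kernel component of $\jv$ into one hyperbolic pair, giving $\mathrm{In}(\B)=(p+1,q+1,n_0-1)$. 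I checked the case bookkeeping and it reproduces (1)--(4) exactly. Note that both proofs lean equally on Gower's fact that $\mathrm{sign}(\F_{\M}(\el))$ is independent of $\el$, since each works with one convenient choice of $\el$. What the paper's computation buys is finer spectral information --- the explicit interlacing $\mu_1<\lambda_2<\mu_2<\cdots<\mu_{r-1}<\lambda_r$ of the eigenvalues of $-\F_{\M}(\vv_1)$ --- at the price of a delicate sign chase; your route is more conceptual, handles all four cases through the single scalar $\jv^\top\M^{+}\jv$, and avoids the secular equation entirely, but it outsources work to the two inertia identities, which (as you rightly flag) must be proved in the singular setting --- though each amounts to a few lines of explicit congruence (take $u$ with $\M u=\jv$, $u\perp\ker\M$, for the Haynsworth step, and your basis-plus-shear argument for the border lemma), so the overall length is comparable.
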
 
\begin{proof}
First, we consider the case $E_0 \subset \jv^\perp$, namely, $0$ is not a main eigenvalue of $\M$.  
%The signature of $-\M$ is $(q,p)$, and compare it with $\F_{\M}(\el)$. 
We may suppose $\lambda_1<\lambda_2< \cdots <\lambda_r$ and $\lambda_1<0$; 
if needed, we consider $-\M$. 
Let $\E_i$ be the orthogonal projection matrix onto the eigenspace $E_i$ associated with $\lambda_i$. 
Let $\vv_i$ be the vector satisfying $\jv^\top \vv_i=1$ in the same direction as $\E_i\jv$. Note that $\vv_i^\top \vv_i=1/n \beta_i^2$ from \eqref{eq:mainangle}.
 We consider
the signature of $-\F_{\M}(\vv_1)$. 
A non-main eigenvalue of $\M$ is an eigenvalue of $ - \F_{\M}(\vv_1) $, and the corresponding eigenvectors are the same.  
Indeed, for an eigenvector $\uv \in \jv^\perp$ of a non-main eigenvalue $\lambda$ of $\M$, 
\begin{align*}
    -\F_{\M}(\vv_1) \uv &= (\I-\jv \vv_1^\top)\M( \I -\vv_1\jv^\top) \uv\\
        &=(\I-\jv \vv_1^\top)\M\uv\\
        &=\lambda (\I-\jv \vv_1^\top)\uv\\
        &=\lambda \uv, 
\end{align*}
since
$\vv_1^\top\uv=0$ and $\I -\vv_1\jv^\top$ is a projection matrix onto $\jv^\perp$. 
A main eigenvalue of $\M$ is also an eigenvalue of $ - \F_{\M}(\vv_1) $ if we use the eigenvectors in $\jv^\perp \cap E_i$. 
 Clearly, $\vv_1$ is an eigenvector corresponding to eigenvalue 0 of $-\F_{\M}(\vv_1)$.  
Therefore, we must determine the signs of the eigenvalues of $- \F_{\M}(\vv_1)$ 
on ${\rm Span}_{\mathbb{R}}\{\vv_2,\ldots, \vv_r \}$. 

For $\x=\sum_{i=2}^r a_i \vv_i$ with $\jv^\top \x=1$,
 namely $\sum_{i=2}^r a_i=1$, 
one has
\begin{align}
-\F_{\M}(\vv_1)\x
&=(\I-\jv \vv_1^\top) (\sum_{i=2}^r a_i \lambda_i \vv_i-\lambda_1 \vv_1) \nonumber\\
&=\sum_{i=2}^r a_i \lambda_i \vv_i-\lambda_1\vv_1 + \frac{\lambda_1}{n \beta_1^2}\jv \nonumber  \\
&=\sum_{i=2}^r a_i \lambda_i \vv_i-\lambda_1\vv_1 + \frac{\lambda_1}{n \beta_1^2}\sum_{i=1}^rn\beta_i^2 \vv_i \label{eq:j=v_i} \\
&=\sum_{i=2}^r(\lambda_i a_i+\lambda_1 \frac{\beta_i^2}{\beta_1^2})\vv_i.  \nonumber 
\end{align}
In \eqref{eq:j=v_i}, the equality $\jv=\sum_{i=1}^rn\beta_i^2 \vv_i$ is applied. 
This equality is proven by the fact that $\jv$ is a linear combination of $\{\vv_1,\ldots, \vv_r\}$ and $(n\beta_i^2) \jv^\top \vv_i=n\beta_i^2$ is the coefficient of $\vv_i$. 
From the equation $-\F_{\M}(\vv_1)\x=\mu \x$
for variable $\mu$ and the linear independence of $\vv_i$, we obtain
\[
a_i=\frac{\lambda_1}{\beta_1^2}\cdot \frac{\beta_i^2}{\mu-\lambda_i}
\] 
for $i\in \{2,\ldots,r\}$. 
By $\sum_{i=2}^r a_i=1$, we have
\begin{equation} \label{eq:mu}
\frac{\lambda_1}{\beta_1^2}\sum_{i=2}^r \frac{\beta_i^2}{\mu-\lambda_i}=1. 
\end{equation}
Multiplying both sides of  
\eqref{eq:mu} by $f(\mu)=\prod_{i=2}^r (\mu-\lambda_i)$ , 
\[
\frac{\lambda_1}{\beta_1^2}\sum_{i=2}^r \frac{\beta_i^2}{\mu-\lambda_i}f(\mu)=f(\mu). 
\]
Moreover, the following polynomial equation for $\mu$ of degree $r-1$ is obtained:  
\begin{equation} \label{eq:mu2}
g(\mu)=\frac{\beta_1^2}{\lambda_1}f(\mu)-\sum_{i=2}^r \frac{\beta_i^2}{\mu-\lambda_i}f(\mu)=0. 
\end{equation}
The solutions of \eqref{eq:mu2} are the eigenvalues of $-\F_{\M}(\vv_1)$ on ${\rm Span}_{\mathbb{R}}\{\vv_2,\ldots, \vv_r \}$. 

The signs of the solutions were also determined.  
Since we have 
\begin{align*}
    g&(\lambda_i)g(\lambda_{i+1})
    =\left(-\beta_i^2\prod_{j\ne 1,i} (\lambda_i-\lambda_j)\right)\left(-\beta_{i+1}^2\prod_{j\ne 1,i+1} (\lambda_{i+1}-\lambda_j)\right)\\
    &=- \beta_i^2 \beta_{i+1}^2 \left(\prod_{1<j<i}(\lambda_i-\lambda_j)(\lambda_{i+1}-\lambda_j) \right) (\lambda_i-\lambda_{i+1})^2 \left(\prod_{j>i+1}(\lambda_i-\lambda_j)(\lambda_{i+1}-\lambda_j) \right) \\
    &<0
\end{align*}
for each $i \in \{2,\ldots, r-1\}$, 
there exists a solution of $g(\mu)$ between $\lambda_i$ and $\lambda_{i+1}$. 
The remaining solution is smaller than $\lambda_2$ because the leading coefficient ${\beta_1^2}/{\lambda_1}<0$ and $\lim_{t\rightarrow -\infty} g(t)g(\lambda_2)<0$. 
Therefore, the roots $\mu_1,\ldots,\mu_{r-1}$ of $g(\mu)$ are simple and it follows that 
%$\mu_i<\lambda_{i+1}<\mu_{i+1}$ for each $i \in \{1,\ldots, r-2\}$ and $\mu_{r-1}<\lambda_{r}$. 
\[
\mu_1<\lambda_2<\mu_2<\lambda_3<\cdots<\lambda_{r-1}<\mu_{r-1}<\lambda_r. 
\]

Let $\lambda_{k}$ be the smallest positive value of $\{\lambda_2,\ldots, \lambda_r\}$. 
The sign of $\mu_{k-1}$ determines the signature of $-\F_{\M}(\vv_1)$. Thus, we consider the sign of $g(0)g(\lambda_{k})$.  
We can calculate
 \[
g(\lambda_{k})g(0)=g(\lambda_k)f(0)\sum_{i=1}^r \frac{\beta_i^2}{\lambda_i} 
\]
and $g(\lambda_k)f(0)>0$. 
If $\sum_{i=1}^r {\beta_i^2}/{\lambda_i}=0$, then 
the signature of $-\F_{\M}(\vv_1)$ is $(p-1,q-1)$.  
If $\sum_{i=1}^r {\beta_i^2}/{\lambda_i}>0$, then 
the signature of $-\F_{\M}(\vv_1)$ is $(p-1,q)$. 
If $\sum_{i=1}^r {\beta_i^2}/{\lambda_i}<0$, then 
the signature of $-\F_{\M}(\vv_1)$ is $(p,q-1)$. 
Therefore, the signature of  $\F_{\M}(\vv_1)$ is  the assertion value for $E_0 \subset \jv^\perp$. 

Suppose $E_0 \not\subset \jv^\perp$, namely, $0$ is a main eigenvalue of $\M$. 
Similar to the case $E_0 \subset \jv^\perp$, we  obtain the polynomial $g(\mu)$ and its roots $\mu_1,\ldots, \mu_{r-1}$. 
Since 0 is a main eigenvalue, $\mu_{i}$ and $\lambda_{i+1}$ have the same sign for $i \in \{1,\ldots, r-1\}$ except for $\lambda_k=0$. The sign of $\mu_{k-1}$ is negative.  
Therefore, the signature of $-\F_{\M}(\vv_1)$ is $(p,q)$, and the assertion holds for $E_0 \not\subset \jv^\perp$. 
\end{proof}

For a given $(V,\mathscr{R})$ with $|V|=n$, we  consider choosing $\av$ such that  the dimensionality of $\D_\mathscr{R}(\av)$ is minimal. 
For $\x \in \mathbb{R}^n$, we define the map $\mathfrak{v}: \mathbb{R}^n \rightarrow \mathbb{R}^s$ as follows: 
 \[
\mathfrak{v}(\x)=-(\x^\top \A_1\x, \x^\top \A_2 \x, \ldots, \x^\top \A_s \x)^\top. 
\]
This vector satisfies that
\[
\mathfrak{v}(\x)^{\top} \av=-\x^\top \D_{\mathscr{R}}(\av)\x.
\]
Let $\mathfrak{V}_\mathscr{R}$ denote the following set, 
\[
\mathfrak{V}_\mathscr{R}=\{\vf(\x) \in \mathbb{R}^s \mid \x \in \jv^{\perp} \subset \mathbb{R}^n, \x^{\top} \x=1 \}.
\] 
For $\jv \in \R^s$ and each $\vf(\x) \in \mathfrak{V}_\mathscr{R}$, 
it follows that 
\begin{align*}
    \jv^\top \vf(\x)=-\x^\top (\J- \I) \x = \x^\top \x=1.  
\end{align*}
because  $\x \in \jv^{\perp}$. Thus $\mathfrak{V}_\mathscr{R}$ is on the hyperplane $\jv^\perp\subset \R^s$. 

For $\el=\hat{\jv}:=(1/n)\jv$, we consider the signature of $\F_{\D_{\mathscr{R}}(\mathfrak{a})}(\hat{\jv})$ to determine the dimensionality of $\D_{\mathscr{R}}(\mathfrak{a})$. 
For $\Pm=\I-(1/n)\J=\I-\jv \el^\top=\I-\el \jv^\top$ and $\mathfrak{a}=(a_1,\ldots,a_s)^\top$, one has
\[
\F_{\D_{\mathscr{R}}(\mathfrak{a})}(\hat{\jv})=
-\Pm \D_{\mathscr{R}}(\mathfrak{a}) \Pm 
=- \sum_{i=1}^s a_i \Pm \A_i \Pm. 
\]
%Let $\Pm=(\I-(1/n)\J)$, which is the orthogonal projection matrix onto $\jv^\perp$.  
It is difficult to determine the eigenspace of $\F_{\D_{\mathscr{R}}(\mathfrak{a})}(\hat{\jv})$
based on the eigenspaces of $\Pm \A_i \Pm$. 
Thus, we suppose $s$-relation graphs $(V,\mathscr{R})$ 
satisfy that $\Pm\A_1\Pm,\ldots, \Pm\A_s\Pm$ are mutually commutative. If symmetric matrices are mutually commutative, then they have common eigenspaces. 
The class of such $s$-relation graphs $(V,\mathscr{R})$ contains symmetric association schemes \cite{BIb} and the minimal dimensionality of the representations of $(V,\mathscr{R})$ can be verified from the common eigenspace of $\Pm\A_i\Pm$. 
%9/19 ここまでやった（page 12）
\begin{theorem}\label{thm:dimrep}
Let $\A_1,\ldots, \A_s$ be the relation matrices of an $s$-relation graph  $(V,\mathscr{R})$. 
Suppose $\Pm\A_1\Pm,\ldots, \Pm\A_s\Pm$ are mutually commutative. Let $E_1,\ldots, E_r$ be the common eigenspaces of $\Pm\A_i\Pm$. 
Let $\x_i \in E_i$ and $m_i$ the dimension of $E_i$. Let $\mathfrak{a} \in \mathbb{R}^s$ and define the two sets
\[
\mathfrak{V}_{\mathfrak{a}}^+=\{ \mathfrak{v} \in \mathfrak{V}_{\mathscr{R}} \mid \mathfrak{a}^\top \mathfrak{v}>0\},\qquad 
\mathfrak{V}_{\mathfrak{a}}^-=\{ \mathfrak{v} \in \mathfrak{V}_{\mathscr{R}} \mid \mathfrak{a}^\top \mathfrak{v}<0\}.\]
%\mathfrak{V}_{\mathfrak{a}}^0=&\{ \mathfrak{v} \in \mathfrak{V}_{\mathscr{R}} \mid \mathfrak{a}^\top \mathfrak{v}=0\}. 
Then $\D_{\mathscr{R}}(\mathfrak{a})$ is representable in $\mathbb{R}^{p,q}$, where  
\[
p=\sum_{i:\, \vf (\x_i) \in \mathfrak{V}_{\mathfrak{a}}^+}m_i, \qquad 
q=\sum_{i:\, \vf (\x_i) \in \mathfrak{V}_{\mathfrak{a}}^-}m_i.
%p=\sum_{i:\vf (x_i) \in \mathfrak{V}_{\mathfrak{a}}^+}m_i
\]
Moreover, the dimensionality of $\D_{\mathscr{R}}(\mathfrak{a})$ is $p+q$. 
\end{theorem}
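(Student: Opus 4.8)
The plan is to reduce the claim to a signature computation for a single symmetric matrix and then read the signs off the sets $\mathfrak{V}_{\mathfrak{a}}^{\pm}$. By Theorem~\ref{thm:G}, the dimensionality of $\D_{\mathscr{R}}(\mathfrak{a})$ equals $p+q$, where $(p,q)={\rm sign}(\F_{\D_{\mathscr{R}}(\mathfrak{a})}(\el))$ for any $\el$ with $\el^\top\jv=1$, and moreover a representation exists in $\R^{p,q}$ but in no $\R^{s,t}$ with $s<p$ or $t<q$. I would take $\el=\hat{\jv}=(1/n)\jv$ and start from the identity already recorded in the text,
\[
\F_{\D_{\mathscr{R}}(\mathfrak{a})}(\hat{\jv})=-\Pm\D_{\mathscr{R}}(\mathfrak{a})\Pm=-\sum_{i=1}^s a_i\,\Pm\A_i\Pm,
\]
so that the entire statement is governed by the signature of $-\sum_{i=1}^s a_i\,\Pm\A_i\Pm$.

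Next I would use the commutativity hypothesis. Since $\Pm$ is the orthogonal projection onto $\jv^\perp$, each $\Pm\A_i\Pm$ maps $\jv^\perp$ into itself and annihilates $\jv$; restricted to $\jv^\perp$ the operators $\Pm\A_1\Pm,\ldots,\Pm\A_s\Pm$ are symmetric and mutually commuting, hence simultaneously orthogonally diagonalizable, so $\jv^\perp$ decomposes into their common eigenspaces $E_1,\ldots,E_r$. The $\jv$-direction contributes a zero eigenvalue to $\F_{\D_{\mathscr{R}}(\mathfrak{a})}(\hat{\jv})$ that does not affect the signature. On $E_j$ the matrix $\Pm\A_i\Pm$ acts as a scalar $\mu_{ij}$, so $\F_{\D_{\mathscr{R}}(\mathfrak{a})}(\hat{\jv})$ acts on $E_j$ as the single scalar $-\sum_{i=1}^s a_i\mu_{ij}$ with multiplicity $m_j=\dim E_j$; the signature is thus completely determined by the signs of these $r$ scalars.

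The crux is to recognize each of these eigenvalues as $\mathfrak{a}^\top\vf(\x_j)$. For a unit vector $\x_j\in E_j\subset\jv^\perp$ one has $\Pm\x_j=\x_j$, so $\x_j^\top\A_i\x_j=\x_j^\top\Pm\A_i\Pm\x_j=\mu_{ij}$; in particular this value depends only on $E_j$ and not on the chosen representative, which is what makes $\vf(\x_j)$ a well-defined point of $\mathfrak{V}_{\mathscr{R}}$ attached to $E_j$. Consequently
\[
\mathfrak{a}^\top\vf(\x_j)=-\sum_{i=1}^s a_i\,\x_j^\top\A_i\x_j=-\sum_{i=1}^s a_i\mu_{ij},
\]
which is exactly the eigenvalue of $\F_{\D_{\mathscr{R}}(\mathfrak{a})}(\hat{\jv})$ on $E_j$. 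Therefore $E_j$ contributes $m_j$ to the positive part of the signature precisely when $\mathfrak{a}^\top\vf(\x_j)>0$, i.e.\ $\vf(\x_j)\in\mathfrak{V}_{\mathfrak{a}}^+$, and $m_j$ to the negative part precisely when $\vf(\x_j)\in\mathfrak{V}_{\mathfrak{a}}^-$; the case $\mathfrak{a}^\top\vf(\x_j)=0$ lands in the kernel and is counted in neither. Summing the $m_j$ over the two classes yields ${\rm sign}(\F_{\D_{\mathscr{R}}(\mathfrak{a})}(\hat{\jv}))=(p,q)$ with the asserted $p$ and $q$, and Theorem~\ref{thm:G} then gives both representability in $\R^{p,q}$ and dimensionality $p+q$.

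The only genuine obstacle is the bookkeeping of the zero eigenvalues: I must verify that the $\jv$-direction, together with any common eigenspace on which all $\mu_{ij}$ vanish, is correctly excluded from both $p$ and $q$ (for such an eigenspace $\vf(\x_j)=0$ lies in neither $\mathfrak{V}_{\mathfrak{a}}^+$ nor $\mathfrak{V}_{\mathfrak{a}}^-$), and that the points $\vf(\x_1),\ldots,\vf(\x_r)$ are exactly the images in $\mathfrak{V}_{\mathscr{R}}$ of the common eigenspaces, so that each sign test for $\mathfrak{a}^\top\vf(\x_j)$ is unambiguous. Everything else is the routine simultaneous diagonalization justified by the commutativity hypothesis.
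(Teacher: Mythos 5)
Your proof is correct and takes essentially the same route as the paper's: reduce via Gower's theorem (Theorem~\ref{thm:G}) to computing the signature of $\F_{\D_{\mathscr{R}}(\mathfrak{a})}(\hat{\jv})=-\Pm\D_{\mathscr{R}}(\mathfrak{a})\Pm$, simultaneously diagonalize the commuting matrices $\Pm\A_i\Pm$ on $\jv^\perp$, and identify the eigenvalue on each common eigenspace $E_j$ with $\mathfrak{a}^\top\vf(\x_j)$. If anything, your write-up is slightly more careful than the paper's about the sign convention built into $\vf$ and about excluding the $\jv$-direction and the eigenspaces with $\mathfrak{a}^\top\vf(\x_j)=0$ from both parts of the signature.
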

\begin{proof}
%We evaluate the signature $(p,q)$ of $\F_{\D_{\mathscr{R}}(\mathfrak{a})}(\hat{\jv})$. 
Let $\lambda_{ij}$ be the eigenvalue of $\Pm \A_i \Pm$ on $E_j$ and $\E_j$ be the orthogonal projection matrix onto $E_j$.
Then, we have
\[
\F_{\D_{\mathscr{R}}(\mathfrak{a})}(\hat{\jv})
=- \sum_{i=1}^s a_i \Pm \A_i \Pm=- \sum_{i=1}^s  a_i  \sum_{j=1}^r\lambda_{ij}\E_j=- \sum_{j=1}^r\left(\sum_{i=1}^s  a_i  \lambda_{ij}\right)\E_j
\]
Note that $\x^\top \F_{\D_{\mathscr{R}}(\mathfrak{a})}(\hat{\jv})\x=\mathfrak{a}^\top \mathfrak{v}(\x)$ for $\x \in \jv^\perp$. 
Since $\vf (\x)=(\lambda_{1j},\ldots, \lambda_{sj})$ for each $\x \in E_j$ with $\x^\top \x=1$,  the eigenvalue of 
$\F_{\D_{\mathscr{R}}(\mathfrak{a})}(\hat{\jv})$ on $E_j$
is expressed as $\sum_{i=1}^s  a_i  \lambda_{ij}=\mathfrak{a}^\top \mathfrak{v}(\x)$. 
This implies that signature $(p,q)$ of $\F_{\D_{\mathscr{R}}(\mathfrak{a})}(\hat{\jv})$ is the desired signature in the theorem.
%%For each $\x \in \bigcup_{i:\, \vf (\x_i) \in \mathfrak{V}_{\mathfrak{a}}^+} E_i$,  we have $\x^\top \F_{\D_{\mathscr{R}}(\mathfrak{a})}(\hat{\jv})\x=\mathfrak{a}^\top \mathfrak{v}(\x)>0$. 
%%This implies $p\geq \sum_{i:\, \vf (\x_i) \in \mathfrak{V}_{\mathfrak{a}}^+}m_i$. Similarly,  $q\geq \sum_{i:\, \vf (\x_i) \in \mathfrak{V}_{\mathfrak{a}}^-}m_i$. 
%%For $\vf(\x_i) \in (\mathfrak{V}_{\mathfrak{a}}^+ \oplus \mathfrak{V}_{\mathfrak{a}}^-)^\perp \subset  \mathfrak{V}_{\mathscr{R}} $, 
%%
\end{proof}
\begin{remark}
    Let $(V,\mathscr{R})$ be a symmetric association scheme of class $s$. 
Let $\E_i$ be the orthogonal projection matrix onto eigenspace $E_i$. The matrix $\E_i$ can be identified with the Gram matrix of a representation of  $(V,\mathscr{R})$ in $\mathbb{R}^{m_i}$. From 
Theorem~\ref{thm:dimrep},  a non-Euclidean representation of a minimal dimensionality has a Gram matrix $a\E_i- b \E_j$ for $a,b>0$, where the signature of the matrix is $(m_i,m_j)$.  
\end{remark}
\begin{remark} \label{rem:2-dis}
For $s=2$, $\Pm\A_1\Pm$ and $\Pm\A_2 \Pm$ are always commutative and $\mathfrak{V}_\mathscr{R}$ is a segment on the line $\jv^\perp\subset \R^2$. From Theorems~\ref{thm:dim_PMP} and \ref{thm:dimrep}, we can confirm Theorem~2.4 in \cite{NS12} (see also \cite{R10}) on the minimal Euclidean dimension of a graph with eigenvalues and main angles. 
\end{remark}

\section{Algorithm for the classification of largest $2$-indefinite-distance sets} 
\label{sec:4}
The algorithm for 
classifying the largest proper  $2$-indefinite-distance sets in $\mathbb{R}^{p,q}$ is based on the algorithm of Lison\v{e}k for classifying the largest $2$-distance sets in $\mathbb{R}^d$.  
To find the largest 2-indefinite-distance sets, we suppose that distances $a,b$ are not 0.
We prove several lemmas to describe the proposed algorithm. 

%---------rewrite with the assumption $n\geq p+q+2$--------------
%\begin{lemma} \label{lem:ratio_dis}
%Let $G$ be a simple graph $(V,\{R_0,R_1,R_2\})$ of order $n$. 
%f $G$ is representable in $\mathbb{R}^{p,q}$ as a %2-indefinite-distance set with distances $a\ne 0$, $b \ne 0$, and $n \geq p+q+3$, then the ratio of two distances $a$, $b$ in the representation are uniquely determined. 
%\end{lemma}
%\begin{proof}
%Let $\D=a \A_1+b\A_2$ be an indefinite-distance matrix of a %representation in $\mathbb{R}^{p,q}$. Let $(r,s)$ be the signature of the matrix $\F_{\D}(\el)$, which is the embedding dimension of $\D$. By Theorem~\ref{thm:dim_PMP}, 
%the signature $(p_1,q_1)$ of $\D$ satisfies $p_1+q_1 \leq (r+1)+(s+1)\leq p+q+2 <n$. 
%This implies $\D$ has zero eigenvalue with an eigenvector $\x$  perpendicular to $\jv$. 
%Indeed, if $E_0 \not\subset \j^\perp$, then ${\rm sign}(\F_{\D}(\el))=(s,r)=(p,q)$ by Theorem~\ref{thm:dim_PMP}, and the multiplicity of zero eigenvalue of $\D$ is $3$.  
%Note that the eigenspaces of  $\Pm \D \Pm$ and $\Pm \A_i \Pm$ are the same. Let $\lambda$ be the eigenvalue of $\Pm \A_1 \Pm$ (or $\A_1$) from the eigenvector $\x$. 
%Then we can express
%\[
%a\lambda=b(1+\lambda),\]
%and $\lambda \ne 0 -1 $ is
 %uniquely determined by Theorem~\ref{thm:dimrep} so that ${\rm sign}(\F_{\D}(\el))=(p,q)$.  (Note that $\mathfrak{V}_{\mathcal{R}}$ is a segment for a simple graph, and such $\lambda$ is easily determined. )
%\end{proof}

\begin{lemma} \label{lem:subsign}
Let $\M$ be a real symmetric matrix of signature $(r,s)$. 
Then there exists a principal $(r+s) \times (r+s)$ submatrix of signature $(r,s)$. 
\end{lemma}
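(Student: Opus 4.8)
The plan is to reduce the claim to two standard facts about real symmetric matrices and then combine them. Write $(r,s)={\rm sign}(\M)$, so that $\M$ (of order $n$, say) has $r$ positive, $s$ negative, and $z:=n-r-s$ zero eigenvalues; in particular ${\rm rank}(\M)=r+s$. The target submatrix thus has exactly the size of the rank, so the first task is to produce \emph{any} nonsingular principal $(r+s)\times(r+s)$ submatrix, and the second is to check that its signature is forced to be $(r,s)$.

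For the first task I would establish the structural statement that a real symmetric matrix of rank $k$ always possesses a nonsingular principal $k\times k$ submatrix. Choose $I\subseteq\{1,\dots,n\}$ with $|I|=k={\rm rank}(\M)$ indexing a maximal linearly independent set of rows of $\M$; by symmetry the columns indexed by $I$ are then also linearly independent, since a set of columns is independent exactly when the transposed set of rows is. Let $N=\M[I,I]$ be the corresponding principal block. To see $N$ is nonsingular, suppose $Nc=0$ for some nonzero $c\in\R^{I}$ and extend $c$ by zeros to $\hat c\in\R^{n}$. One checks directly that $(\M\hat c)_i=0$ for $i\in I$, and for $i\notin I$ this persists because row $i$ of $\M$ is a linear combination of the rows indexed by $I$. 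Hence $\M\hat c=0$, which exhibits a nontrivial dependence among the columns of $\M$ indexed by $I$, contradicting their independence; thus $N$ is nonsingular.

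With $N$ in hand, the signature is pinned down by Cauchy's interlacing theorem. Writing the eigenvalues of $\M$ as $\lambda_1\ge\cdots\ge\lambda_n$ and those of $N$ as $\mu_1\ge\cdots\ge\mu_{r+s}$, interlacing gives $\lambda_i\ge\mu_i\ge\lambda_{i+z}$ for $1\le i\le r+s$. For $i\le r$ the right inequality forces $\mu_i\ge\lambda_{i+z}\ge 0$, and nonsingularity of $N$ upgrades this to $\mu_i>0$; for $i>r$ the left inequality forces $\mu_i\le\lambda_i\le 0$, hence $\mu_i<0$. This yields at least $r$ positive and at least $s$ negative eigenvalues of $N$, and since $N$ has exactly $r+s$ eigenvalues, its signature is precisely $(r,s)$.

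The main obstacle is the existence step, not the bookkeeping: a maximal independent set of rows and a maximal independent set of columns need not be supported on a common index set, so a \emph{general} matrix of rank $k$ has no reason to possess a nonsingular principal $k\times k$ block. It is exactly the symmetry of $\M$ — equating the row and column dependencies — that lets one align them into a principal submatrix, and this is where care is required. Once that block is produced, the interlacing argument, together with the fact that a nonsingular matrix has no zero eigenvalues, closes the signature computation routinely.
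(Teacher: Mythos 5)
Your proof is correct, but it takes a genuinely different route from the paper's. The paper diagonalizes: it writes $\M=\Pm\begin{pmatrix}\D & \Om\\ \Om & \Om\end{pmatrix}\Pm^\top$ with $\D$ the $(r+s)\times(r+s)$ diagonal block of nonzero eigenvalues, permutes indices so that the leading principal $(r+s)\times(r+s)$ submatrix of $\M$ becomes $\Pm_{11}\D\Pm_{11}^\top$, and concludes by Sylvester's law of inertia that this congruence of $\D$ has signature $(r,s)$. You instead work directly on $\M$: you extract a nonsingular principal block of size equal to the rank via the symmetry-based row/column dependence argument, and then let Cauchy interlacing together with nonsingularity force the signature. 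The existence steps in the two proofs are really the same combinatorial fact in different clothing --- the paper's ``suitable permutations'' amount to selecting $r+s$ linearly independent rows of the $n\times(r+s)$ eigenvector block so that $\Pm_{11}$ is invertible, which is precisely your independent-row selection transported to the eigenvector matrix --- but your write-up makes this step explicit, whereas the paper leaves the nonsingularity of $\Pm_{11}$, on which its congruence argument hinges, unstated. Your finish also buys a slightly stronger statement: \emph{every} nonsingular principal $(r+s)\times(r+s)$ submatrix of $\M$ has signature exactly $(r,s)$, not just the one constructed. The trade-off is that you invoke Cauchy interlacing, while the paper needs only Sylvester's law of inertia; both are standard, so the difference is one of taste rather than depth.
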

\begin{proof}
Applying suitable permutations to the rows and columns of $\M$, an orthogonal matrix $\Pm$ can be generated such that $\Pm^{\top} \M \Pm$ is a diagonal matrix 
\[
\begin{pmatrix}
\D & \Om \\
\Om & \Om 
\end{pmatrix},
\]
where $\D$ is an $(r+s)\times (r+s)$ diagonal matrix with $r$ positive and $s$ negative entries. 
Then 
\begin{align*}
\M=\Pm\begin{pmatrix}
\D & \Om \\
\Om & \Om 
\end{pmatrix}\Pm^\top= 
\begin{pmatrix}
\Pm_{11} & \Pm_{12} \\
\Pm_{21} & \Pm_{22} 
\end{pmatrix}\begin{pmatrix}
\D & \Om \\
\Om & \Om 
\end{pmatrix}\begin{pmatrix}
\Pm_{11}^\top & \Pm_{21}^\top \\
\Pm_{12}^\top & \Pm_{22}^\top 
\end{pmatrix}
=\begin{pmatrix}
\Pm_{11}\D \Pm_{11}^\top &\Pm_{11}\D \Pm_{21}^\top \\
\Pm_{21}\D \Pm_{11}^\top & \Pm_{21}\D \Pm_{21}^\top 
\end{pmatrix},
\end{align*}
and $\Pm_{11}\D \Pm_{11}^\top$ is an $(r+s) \times (r+s)$ submatrix with signature $(r,s)$. 
\end{proof}

\begin{lemma} \label{lem:eigen_rep_graph}
Let $G$ be a simple graph $(V,\{R_0,R_1,R_2\})$ of order $n$ with relation matrices $\A_i$. Let $p,q$ be non-negative integers such that $n\geq p+q+3$. 
 Let $\Pm=\I-(1/n)\J$.   
Then, the following are equivalent: 
\begin{enumerate}
\item A dissimilarity matrix $\D=\A_1+b \A_2$ ({\it resp}. $-\A_1  +b \A_2$) is representable in $\mathbb{R}^{p,q}$ for some $b\in \mathbb{R}$ with $|b|<1$. 
\item There exists a unique eigenvalue $\lambda >-1/2$ of $\Pm \A_1\Pm$ on $\jv^\perp$ such that the sum of the multiplicities of the eigenvalues on $\jv^\perp$ less than $\lambda$ is at most $p$ ({\it resp}. $q$), and the sum of the multiplicities of the eigenvalues greater than $\lambda$ is at most $q$ ({\it resp}. $p$). 
\end{enumerate}
Moreover, if $(1)$ holds, then $b=\lambda/(1+\lambda)$ ({\it resp}. $b=-\lambda/(1+\lambda)$) with eigenvalue $\lambda$ obtained in $(2)$. 
\end{lemma}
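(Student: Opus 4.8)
The plan is to reduce everything to the spectrum of $\Pm\A_1\Pm$ on $\jv^\perp$ and then invoke Gower's Theorem~\ref{thm:G}. Since $G$ has three relations, $\A_0=\I$ and $\A_1+\A_2=\J-\I$, so $\A_2=\J-\I-\A_1$ and hence $\D=\A_1+b\A_2=(1-b)\A_1+b\J-b\I$. Using $\Pm\J=\J\Pm=\Om$ and $\Pm^2=\Pm$, I would first compute
\[
\F_{\D}(\hat{\jv})=-\Pm\D\Pm=-(1-b)\Pm\A_1\Pm+b\Pm .
\]
Because $\Pm$ acts as the identity on $\jv^\perp$ and annihilates $\jv$, this matrix shares its eigenvectors with $\Pm\A_1\Pm$: the vector $\jv$ contributes eigenvalue $0$, while each eigenvalue $\mu$ of $\Pm\A_1\Pm$ on $\jv^\perp$ produces the eigenvalue $b-(1-b)\mu$ of $\F_{\D}(\hat{\jv})$.

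Next I would read off the signature. Assuming $|b|<1$, so that $1-b>0$, the eigenvalue $b-(1-b)\mu$ is positive exactly when $\mu<b/(1-b)$ and negative exactly when $\mu>b/(1-b)$. Writing $\lambda=b/(1-b)$, equivalently $b=\lambda/(1+\lambda)$, a short monotonicity check shows the map $b\mapsto\lambda$ is a bijection carrying $|b|<1$ onto $\lambda>-1/2$. Thus ${\rm sign}(\F_{\D}(\hat{\jv}))=(p_0,q_0)$, where $p_0$ (resp.\ $q_0$) is the sum of the multiplicities of the eigenvalues of $\Pm\A_1\Pm$ on $\jv^\perp$ that are less than (resp.\ greater than) $\lambda$. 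By Theorem~\ref{thm:G}, $\D$ is representable in $\R^{p,q}$ if and only if $p_0\le p$ and $q_0\le q$, which is the substance of condition $(2)$ once I argue that $\lambda$ is genuinely an eigenvalue.

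The main point, where the hypothesis $n\ge p+q+3$ enters, is precisely this last issue together with uniqueness. If $\D$ is representable in $\R^{p,q}$, then the multiplicity of $\lambda$ as an eigenvalue of $\Pm\A_1\Pm$ on $\jv^\perp$ is $(n-1)-p_0-q_0\ge (n-1)-p-q\ge 2$, so $\lambda$ is indeed an eigenvalue (of multiplicity at least $2$), giving $(1)\Rightarrow(2)$ with the stated value of $b$. For uniqueness I would order the distinct eigenvalues $\nu_1<\cdots<\nu_k$ on $\jv^\perp$ with multiplicities $m_i$ and set $S_t=\sum_{i\le t}m_i$; the condition on $\lambda=\nu_t$ reads $S_{t-1}\le p$ and $S_t\ge (n-1)-q$. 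Since $(n-1)-q\ge p+2>p$, two distinct indices cannot both satisfy these inequalities, forcing uniqueness. Conversely, given such a unique $\lambda>-1/2$ in $(2)$, I would put $b=\lambda/(1+\lambda)$ (so $|b|<1$) and run the computation backwards: ${\rm sign}(\F_{\D}(\hat{\jv}))=(p_0,q_0)$ with $p_0\le p$ and $q_0\le q$, so Theorem~\ref{thm:G} produces a representation in $\R^{p,q}$, proving $(2)\Rightarrow(1)$.

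Finally, the parenthetical ``resp.'' statement for $\D=-\A_1+b\A_2$ is handled by the same computation with a single sign change: here $\F_{\D}(\hat{\jv})=(1+b)\Pm\A_1\Pm+b\Pm$, the relevant threshold becomes $\lambda=-b/(1+b)$, i.e.\ $b=-\lambda/(1+\lambda)$, and the sign of $(1+b)\mu+b$ interchanges the counts of eigenvalues above and below $\lambda$, thereby swapping the roles of $p$ and $q$ exactly as stated. I expect the genuinely delicate step to be the existence-and-uniqueness argument for $\lambda$ as an eigenvalue, since it is the only place the size hypothesis $n\ge p+q+3$ is used and it is what makes condition $(2)$ well posed.
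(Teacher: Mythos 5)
Your proposal is correct and follows essentially the same route as the paper: the identity $-\Pm\D\Pm=(b-1)\Pm\A_1\Pm+b\Pm$, the correspondence $b=\lambda/(1+\lambda)$ with $|b|<1\iff\lambda>-1/2$, the signature count relative to the threshold $\lambda$, Gower's Theorem~\ref{thm:G} for (non-)representability, and the hypothesis $n\ge p+q+3$ to force a kernel vector in $\jv^\perp$ (so $\lambda$ is a genuine eigenvalue) and uniqueness. Your explicit partial-sum argument for uniqueness and the multiplicity bound $(n-1)-p_0-q_0\ge 2$ merely spell out what the paper compresses into ``we can remove only one eigenspace by choosing $\lambda$.''
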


\begin{proof}
We only prove this lemma for $\D=\A_1+b \A_2$. 
The proof of $-\A_1+b \A_2$ is similar. 
First, we prove $(1)\Rightarrow (2)$. 
Let $(r,s)$ be the embedding dimension of $\D$. 
Then, $(r,s)={\rm sign}(-\Pm \D \Pm)$. It follows that $p\geq r$ and $q \geq s$. 
From $n\geq p+q+3\geq r+s+3$, there exists a zero eigenvalue of $-\Pm \D \Pm $ with an eigenvector $\x$ perpendicular to $\jv$. 
Since $\Pm \A_1 \Pm$ and $\Pm \A_2 \Pm$ are commutative, 
the eigenspaces of $-\Pm \D \Pm $ and $\Pm \A_i \Pm$ are identical. From $\A_2=\J-\I-\A_1$,  
we establish the following equation: 
\[
-\Pm (\A_1+b \A_2) \Pm=(b-1) \Pm \A_1 \Pm +b \Pm. 
\]
Moreover, 
the eigenvalue $\lambda$ of $\Pm \A_1 \Pm$, derived from the eigenvector $\x$, satisfies $(b-1)\lambda + b=0$, which simplifies to $b=\lambda/(1+\lambda)$. 
It is noteworthy that $|b|<1$ if and only if $\lambda>-1/2$.  
From Theorem~\ref{thm:dim_PMP} and Remark~\ref{rem:2-dis}, the sum of the multiplicities of the eigenvalues on $\jv^\perp$ less ({\it resp.} greater) than $\lambda$ is equal to $r (\leq p)$ ({\it resp.} $s (\leq q)$). Moreover, the eigenvalue $\lambda$ is unique because we can remove only one eigenspace by choosing $\lambda$. 
%the sums of such multiplicities are at most $p$ or $q$. 

Considering $b=\lambda/(1+\lambda)$ for a given $\lambda$ in $(2)$, we can clearly show $(2)\Rightarrow (1)$.  
\end{proof}

Now, we determine an algorithm to classify representable graphs as 2-indefinite-distance sets in $\R^{p,q}$. All calculations were performed using Magma \cite{BCP97}. 
The algorithm is applied to two cases: $\A_1+b \A_2$ and $-\A_1+ b \A_2$ (In the case when $p=q$, we may apply  it to $\A_1+b \A_2$); however, we will outline its implementation only for $\A_1+b \A_2$. 
For Euclidean representations, a distance matrix (or a graph) is representable in $\mathbb{R}^d$ if and only if any $(d+3)$-point principal submatrix (or subgraph) of the matrix  
is representable in $\R^d$. 
This is a key argument in the Lison\v{e}k algorithm.  
 This type of result is not known for $\mathbb{R}^{p,q}$; however, if a subgraph of order $p+q+3$ is representable in $\R^{p,q}$, then the distances are uniquely determined by Theorem~\ref{thm:dim_PMP} or Lemma \ref{lem:eigen_rep_graph}. 
Moreover, any indefinite-distance matrix of $\R^{p,q}$ has a principal submatrix of size $p+q+3$ whose signature is $(p,q)$ by Lemma~\ref{lem:subsign}. 
In our algorithm, we collect all graphs whose $(p+q+3)$-vertex subgraphs are representable in $\R^{p,q}$ in a manner similar to Lison\v{e}k, and we check whether 
their largest graph is representable in $\R^{p,q}$.  
A graph is said to be {\it quasi-representable} in $\R^{p,q}$ if any $(p+q+3)$-vertex subgraph is representable in $\R^{p,q}$

First, we fixed the dimensions $(p,q)$ and collected all graphs with $p+q+3$ vertices by using Macky's algorithm \cite{M98}. 
Magma has a database of all graphs with a maximum of 10 vertices.  
For each graph with $p+q+3$ vertices, 
we calculate the characteristic polynomial $f(x)$ of $\Pm \A \Pm$ in $\mathbb{Q}[x]$. 
We obtain the prime factorization of $f(x)=g_1(x)^{m_1} \cdots g_r(x)^{m_r}$ over $\mathbb{Q}$. 
A root of $g_i(x)$ can be expressed as a sufficiently small segment $[s,t]$ with $s,t\in \mathbb{Q}$  containing the root. 
Since the roots of the irreducible polynomial $g_i(x)$ are simple, the exponent $m_i$ is the multiplicity of the root of $f(x)$. 
We collect all graphs that have an eigenvalue $\lambda$ satisfying Lemma \ref{lem:eigen_rep_graph} (2). 
Let $L[n,\lambda]$ be the set of all graphs of order $n$ that are quasi-representable in $\mathbb{R}^{p,q}$ and have an eigenvalue $\lambda\ne 0$ satisfying the condition of Lemma~\ref{lem:eigen_rep_graph} (2). 
Let $L'[n,\lambda]$ be the subset of $L[n,\lambda]$ consisting of graphs that are quasi-representable in $\mathbb{R}^{p,q}$ but not in $\mathbb{R}^{p,q-1}$ and $\mathbb{R}^{p-1,q}$.  
We can construct $L[p+q+3,\lambda]$ and $L'[p+q+3,\lambda]$ by checking for the existence of $\lambda$ that satisfies the condition of  Lemma~\ref{lem:eigen_rep_graph} (2) for each graph of order $p+q+3$. 
Here, the corresponding distance $b$ is $\lambda/(\lambda+1)$ for $\A_1+b \A_2$ and $-\lambda/(\lambda+1)$ for $-\A_1+b \A_2$. 
If $L'[p+q+3,\lambda]$ is empty for any $\lambda$, 
then there does not exist a 2-indefinite-distance set in $\mathbb{R}^{p,q}$ of size at least $p+q+3$ by Lemma~\ref{lem:subsign}.

For each graph $G$ in $L[p+q+3, \lambda]$, we add a new vertex such that the new graph is quasi-representable in $\mathbb{R}^{p,q}$. The number of options for adding a new vertex is $2^{p+q+3}$ and 
is determined by the vectors in $\{0,1\}^{p+q+3}$, where 0 in position $i$ means that the new vertex has no edge to the current vertex $i$ in the graph and 1 means that there is such an edge. 
We added a new vector to the last row and column of $\A_1$. 
For the adjacency matrix $\A'$ of the new graph after adding a new vertex, let $\A_i'$ be the principal submatrix of order $p+q+3$ by removing the $i$th row and column. 
If $\A_i'$ is in $L[p+q+3, \lambda]$ (here, we use the Magma function to check the isomorphism of the graphs) for each $i \in \{1,\ldots, p+q+3 \}$, then $\A'$ is quasi-representable in $\mathbb{R}^{p,q}$. 
By collecting all graphs $\A'$ that are quasi-representable in $\mathbb{R}^{p,q}$ in this manner, we obtain $L[p+q+4,\lambda]$. If a graph $G\in L[p+q+3,\lambda]$ is also in $L'[p+q+3,\lambda]$, then a new quasi-representable graph after adding a vertex is also added to $L'[p+q+4,\lambda]$.  
Similarly, we can construct $L[n,\lambda]$ and $L'[n,\lambda]$ for $n\geq p+q+4$ until $L'[n+1,\lambda]$ becomes empty.  
It is sufficient to check whether $\A_i'$ is in  $L[n, \lambda]$ for each $i \in \{1,\ldots, p+q+3 \}$ to determine the representability of $\A'$ (see Lison\v{e}k's argument in \cite[p.\ 336]{L97}). When $L'[n+1,\lambda]$ is empty, 
 we collect graphs in $L'[n,\lambda]$ that are representable in $\mathbb{R}^{p,q}$, which are the classification of largest proper $2$-indefinite-distance sets in $\mathbb{R}^{p,q}$. Fortunately,  all graphs in $L'[n,\lambda]$ are representable in $\R^{p,q}$ for any $p+q+3 \leq 10$. 
This algorithm is based on the Lison\v{e}k algorithm \cite{L97}, and its computational cost is almost the same as Lison\v{e}k's.

\begin{table}
    \centering
    \begin{tabular}{c|ccccccc}
         $q$ $\backslash$ $p$&1 & 2& 3& 4&  5& 6 &7\\
         \hline 
          0 &$3_1$& $5_1$ & $6_6$ & $10_1$ & $16_1$ & $27_1$ & $29_1$ \\ 
          1 & $3_{\infty}$ &$5_8$ &$7_3$&$10_2$&$13_3$& $22_1$& \\
          2 & &$7_1$ &$8_3$ & $10_3$ &$13_1$& &\\
          3 & & & $9_{14}$ &$12_1$ & & &
    \end{tabular}
    \caption{The cardinality of a largest proper 2-indefinite-distance set in $\mathbb{R}^{p,q}$}
    \label{tab:my_label}
\end{table}

Table~\ref{tab:my_label} lists the cardinality of a largest proper 2-indefinite-distance set in $\mathbb{R}^{p,q}$. 
The subscript indicates the number of the largest proper 2-indefinite-distance sets up to isomorphism. In Appendix \ref{sec:A}, we present the largest proper 2-indefinite-distance sets and their properties.

\section{Largest spherical 2-indefinite-distance sets} \label{sec:5}
We define a sphere $S_{p,q}(r)$ of radius $r>0$ in $\mathbb{R}^{p,q}$ as follows:  
\[
S_{p,q}(r)=\{ \x \in \mathbb{R}^{p,q} \mid \langle\langle \x, \x \rangle \rangle=r \}. 
\]
The sphere $S_{p,q}(1)=S_{p,q}$ is called the {\it unit sphere}. 
For $X \subset S_{p,q}$, $X$ is called a {\it proper spherical set} if $X$ is not included in $S_{p-1,q}$ or $S_{p,q-1}$. 
If an $s$-indefinite-distance set $X$ in $S_{p,q}$ satisfies $\la \la \x,\y \ra \ra \ne 1$ for any distinct $\x,\y \in X$, then we can prove an upper bound \[
|X|\leq \binom{p+q+s-1}{s}+\binom{p+q+s-2}{s-1}
\]
in the same manner as in the proof of Theorem \ref{thm:absolute_bound} using a polynomial 
\[
F(\x, \y)=\prod_{\alpha \in B(X)} \frac{\la \la \x , \y \ra \ra - \alpha}{1-\alpha},
\]
where $B(X)=\{\la\la \x, \y \ra \ra \mid \x, \y \in X, \x, \ne \y \}$. 
Improved upper bounds are known for $q=1$ \cite{BT}. 
\begin{theorem}\label{thm:ab_bound_sphere}
Let $X$ be an $s$-indefinite-distance set in $S_{p,1}$. 
If $\langle\langle \x, \y \rangle \rangle\ne 1$ for any two distinct points $\x,\y \in X$, 
then 
\[
|X| \leq \binom{p+s}{s}. 
\]
\end{theorem}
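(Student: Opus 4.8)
The plan is to run the polynomial method exactly as in Theorem~\ref{thm:absolute_bound} and the spherical bound preceding the statement, but to exploit that for $q=1$ the ``timelike'' part of the Gram matrix is a single rank-one term, which is what lets one discard the extra summand $\binom{p+s-1}{s-1}$. Write $\x=(\bar\x,u_\x)$ with $\bar\x\in\R^p$ the spatial part and $u_\x\in\R$ the last coordinate, so that $\la\la \x,\y\ra\ra=\la \bar\x,\bar\y\ra-u_\x u_\y$ and, since $X\subset S_{p,1}$, one has $|\bar\x|^2=1+u_\x^2$. Let $G(t)=\prod_{\alpha\in B(X)}(t-\alpha)/(1-\alpha)$, a polynomial of degree $s$ with $G(1)=1$ and $G(\alpha)=0$ for every $\alpha\in B(X)$; this is well defined because the hypothesis $\la\la \x,\y\ra\ra\ne 1$ guarantees $1\notin B(X)$. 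Then $\M:=(G(\la\la \x,\y\ra\ra))_{\x,\y\in X}=\I$, so the number $r_+$ of positive eigenvalues of $\M$ equals $|X|$, and it suffices to bound $r_+$ by $\binom{p+s}{s}$.

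Next I would record the structural splitting of the indefinite Gram matrix. Let $\Sm=(\la \bar\x,\bar\y\ra)_{\x,\y\in X}$ be the Euclidean Gram matrix of the spatial parts and $\uv=(u_\x)_{\x\in X}$; then $(\la\la \x,\y\ra\ra)=\Sm-\uv\uv^\top$ with $\Sm\succeq 0$ and $\mathrm{rank}\,\Sm\le p$. Expanding $G(\la\la \x,\y\ra\ra)=G(\la \bar\x,\bar\y\ra-u_\x u_\y)$ by Taylor's formula in the product $u_\x u_\y$ and replacing each even power through the sphere relation $u_\x^{2l}=(|\bar\x|^2-1)^l$, I obtain a decomposition
\[
\M=\B-\mathrm{diag}(\uv)\,\tilde\Sm\,\mathrm{diag}(\uv),
\]
where $\B=(K(\bar\x,\bar\y))$ is a symmetric kernel matrix with $K$ of degree at most $s$ in each of $\bar\x,\bar\y\in\R^p$, while $\tilde\Sm=(\tilde K(\bar\x,\bar\y))$ has $\tilde K$ of degree at most $s-1$. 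Realizing these kernels through the monomial feature map $\bar\x\mapsto(\bar\x^{\mu})_{|\mu|\le s}$ gives $\mathrm{rank}\,\B\le\binom{p+s}{s}$ (and $\mathrm{rank}\,\tilde\Sm\le\binom{p+s-1}{s-1}$). If the timelike correction $\mathrm{diag}(\uv)\,\tilde\Sm\,\mathrm{diag}(\uv)$ is positive semidefinite, then $\M\preceq\B$ in the Loewner order; since subtracting a positive semidefinite matrix cannot increase the number of positive eigenvalues, this yields $|X|=r_+(\M)\le r_+(\B)\le\mathrm{rank}\,\B\le\binom{p+s}{s}$.

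The heart of the argument, and the step I expect to be the main obstacle, is the positivity $\tilde\Sm\succeq 0$. This is precisely where $q=1$ is essential: the negative direction contributes the single factor $\mathrm{diag}(\uv)$, so the correction is a congruence of $\tilde\Sm$, and $\tilde\Sm$ is assembled from the odd-order derivatives $G^{(2l+1)}$ evaluated entrywise on $\Sm$. By the Schur product theorem, $\Sm\succeq 0$ forces every Hadamard power $\Sm^{\circ k}\succeq 0$, so each matrix $(G^{(2l+1)}(\la \bar\x,\bar\y\ra))_{\x,\y}$—and hence $\tilde\Sm$—is positive semidefinite as soon as the coefficients of $G$ are nonnegative. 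The remaining difficulty is that the interpolation polynomial $G$ need not have nonnegative coefficients; I would handle this either by a case analysis on the inner-product values in $B(X)$ (in the degenerate sign regimes $\B$ itself loses positive eigenvalues, so that the robust estimate $r_+(\M)\le r_+(\B)+r_+(\mathrm{diag}(\uv)\,\tilde\Sm\,\mathrm{diag}(\uv))$ still closes to $\binom{p+s}{s}$), or by absorbing the sphere relation into $G$ so as to arrange the required signs. Verifying that this bookkeeping always totals to $\binom{p+s}{s}$, rather than the naive $\binom{p+s}{s}+\binom{p+s-1}{s-1}$, is the crux of the proof.
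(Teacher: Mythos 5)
Your setup is sound as far as it goes: with $G(t)=\prod_{\alpha\in B(X)}(t-\alpha)/(1-\alpha)$ one indeed gets $\M=(G(\la\la \x,\y\ra\ra))_{\x,\y\in X}=\I$, the Taylor/sphere-relation decomposition $\M=\B-\mathrm{diag}(\uv)\,\tilde{\Sm}\,\mathrm{diag}(\uv)$ is correct, $\mathrm{rank}\,\B\le\binom{p+s}{s}$, and Weyl's inequality does give $r_+(\M)\le r_+(\B)$ \emph{when} the correction term is positive semidefinite. But the proof has a genuine gap exactly where you locate it, and it is not a bookkeeping issue: the positivity $\tilde{\Sm}\succeq 0$ is false in general. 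Take $s=1$ and a set whose single inner product is $\alpha>1$; this is not excluded by the hypothesis $\la\la\x,\y\ra\ra\ne 1$ (distinct points on one branch of $S_{1,1}$ have pairwise products $\cosh(t-t')>1$, and the same phenomenon occurs in $S_{p,1}$). Then $G'\equiv 1/(1-\alpha)<0$, so $\tilde{\Sm}=\frac{1}{1-\alpha}\J$ is negative semidefinite, and the ``correction'' $-\mathrm{diag}(\uv)\,\tilde{\Sm}\,\mathrm{diag}(\uv)=\frac{1}{\alpha-1}\uv\uv^\top$ is a positive semidefinite term \emph{added} to $\B$, so your Loewner comparison runs in the wrong direction. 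Moreover, the Schur-product mechanism you invoke needs nonnegative coefficients of the derivatives of $G$, and this fails even in innocuous regimes (two roots of $G$ in $(0,1)$ already make a coefficient negative); it is not something that $q=1$ buys you.

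Your two fallbacks do not close this hole as stated. The robust estimate $r_+(\M)\le r_+(\B)+\mathrm{rank}(\tilde{\Sm})$ yields only $\binom{p+s}{s}+\binom{p+s-1}{s-1}$, which is exactly the general-$q$ bound the paper proves just before the statement, i.e.\ the bound the theorem is supposed to improve. The parenthetical claim that in the bad sign regimes $\B$ itself loses enough positive eigenvalues is precisely the content of the theorem: it is asserted, not proved, and for $s\ge 2$ with $B(X)$ containing values on both sides of $1$ the signs of the various Taylor terms mix, with no argument given that the totals always come out to $\binom{p+s}{s}$. For comparison, the paper itself offers no proof of this statement: it quotes it from Blokhuis \cite{BT}, where the $q=1$ improvement is obtained by a different device (augmenting the family of annihilating functions $F_\x$ by suitable low-degree polynomials and proving linear independence of the combined family), not by a Loewner/Schur-product comparison. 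As it stands, your proposal is a program whose decisive lemma is missing and, in the form stated, false.
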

For each $p\geq 10$, 
there exists a spherical 2-indefinite-distance set attaining the upper bound from Theorem~\ref{thm:ab_bound_sphere} \cite{L97}.  
The construction is based on Lison\v{e}k \cite{L97}, and this particular construction is a representation of a graph in the switching class of the Johnson graph. 

A dissimilarity matrix $\D$ of embedding dimension $(p,q)$ is said to be {\it Type (i)} if 
$-\D$ satisfies the condition in Theorem~\ref{thm:dim_PMP} (i). 
\begin{remark} \label{rem:type}
We can determine the type of a dissimilarity matrix $\D$ of embedding dimension ${\rm sign}(\F_{\D}(\el))=(p,q)$
using the signature of $-\D$.  Thus, we obtain the following: 
\begin{enumerate}
    \item  ${\rm sign}(-\D)=(p+1,q+1)$ if and only if $\D$ is of Type (1). 
    \item ${\rm sign}(-\D)=(p,q+1)$ if and only if $\D$ is of Type (2). 
    \item ${\rm sign}(-\D)=(p+1,q)$ if and only if $\D$ is of Type (3). 
    \item ${\rm sign}(-\D)=(p,q)$ if and only if $\D$ is of Type (4). 
\end{enumerate}
\end{remark}
We can determine whether an indefinite-distance matrix has a spherical representation in its embedding dimension as follows:   
\begin{theorem} \label{thm:spherical}
Let $\D$ be a dissimilarity matrix of embedding dimension $(p,q)$.
Then, $\D$ is the indefinite-distance matrix of some set in $S_{p,q}(r)$ if and only if $\D$ is Type (2). 
\end{theorem}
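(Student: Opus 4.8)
The plan is to collapse the spherical question into a single signature condition and then resolve that condition by a rank-one perturbation analysis. First I would record the basic equivalence: a set $X\subset S_{p,q}(r)$ has indefinite-distance matrix $\D$ if and only if its Gram matrix $G_0=(\la\la\x,\y\ra\ra)_{\x,\y\in X}$ equals $r\J-\tfrac12\D$. Indeed, on the sphere $\la\la\x,\x\ra\ra=r$, so $\D_{\x\y}=\la\la\x-\y,\x-\y\ra\ra=2r-2\la\la\x,\y\ra\ra$, and conversely the diagonal of $r\J-\tfrac12\D$ is automatically $r$ since $\D_{\x\x}=0$. A symmetric matrix is a Gram matrix of vectors in $\R^{p,q}$ exactly when it has at most $p$ positive and at most $q$ negative eigenvalues, so $\D$ is realizable on $S_{p,q}(r)$ iff $r\J-\tfrac12\D$ has signature $\le(p,q)$. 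Moreover the centering projection kills the $r\J$ term, giving $\F_{\D}(\el)=2(\I-\jv\el^{\top})G_0(\I-\el\jv^{\top})$, a compression of $G_0$; since the embedding dimension is $(p,q)$, this matrix has rank $p+q$, forcing $\mathrm{rank}(G_0)\ge p+q$. Combined with being realizable in $\R^{p,q}$, this pins the signature exactly: the theorem is equivalent to the statement that there exists $r>0$ with $\mathrm{sign}(r\J-\tfrac12\D)=(p,q)$.

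Next I would study the pencil $G_0(r)=\M_0+r\jv\jv^{\top}$, where $\M_0=-\tfrac12\D$. By Remark~\ref{rem:type}, $\mathrm{sign}(\M_0)=\mathrm{sign}(-\D)$ equals $(p+1,q+1),(p,q+1),(p+1,q),(p,q)$ in Types (1)--(4), so in each case $(p,q)$ differs from $\mathrm{sign}(\M_0)$ by deleting one nonzero eigenvalue. The engine is the bordered-matrix inertia identity: evaluating the inertia of $\left(\begin{smallmatrix}\M_0 & \jv\\ \jv^{\top} & -1/r\end{smallmatrix}\right)$ by Schur complements in each block yields, whenever $\jv\in\mathrm{range}(\M_0)$,
\[
\mathrm{In}(G_0(r))=\mathrm{In}(\M_0)+\mathrm{In}\!\left(-\tfrac1r-\gamma\right)-\mathrm{In}\!\left(-\tfrac1r\right),\qquad \gamma:=\jv^{\top}\M_0^{+}\jv,
\]
where $\mathrm{In}$ denotes the inertia triple. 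The scalar $\gamma$ is the decisive invariant: from the definition of the main angles, $\jv^{\top}\D^{+}\jv=n\sum_i\beta_i^2/\lambda_i$, so $\gamma=-2n\sum_i\beta_i^2/\lambda_i$ carries the sign opposite to the quantity that governs the Type in Theorem~\ref{thm:dim_PMP}.

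With this I would run the case analysis. In Type (2) one has $E_0\subset\jv^{\perp}$ (so $\jv\in\mathrm{range}(\M_0)$ and $\gamma$ is defined) and $\sum_i\beta_i^2/\lambda_i>0$, i.e.\ $\gamma<0$; choosing $r=-1/\gamma>0$ makes $-1/r=\gamma<0$ and $-1/r-\gamma=0$, so the identity lowers the negative index by one and raises the nullity by one, giving exactly $\mathrm{sign}(G_0(r))=(p,q)$ with $r>0$. For the converse I would show no other type admits a positive radius: in Type (1), $\gamma=0$ gives $\mathrm{In}(G_0(r))=\mathrm{In}(\M_0)=(p+1,q+1)$ for every $r$; in Type (3), $\gamma>0$ yields $(p,q)$ only at $r=-1/\gamma<0$, while every $r>0$ gives $(p+1,q)$; and in Type (4), where $\jv\not\in\mathrm{range}(\M_0)$, a direct rank-one argument shows that for $r\ne0$ the kernel component of $\jv$ contributes one eigenvalue of sign $\mathrm{sign}(r)$, so $r>0$ gives $(p+1,q)$ and only $r=0$ returns $(p,q)$. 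Combining, $\mathrm{sign}(r\J-\tfrac12\D)=(p,q)$ holds for some $r>0$ precisely when $\D$ is Type (2).

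I expect the main obstacle to be the bookkeeping that isolates \emph{positive} radii: the signature $(p,q)$ is in fact attainable in Types (2), (3), and (4), but only at $r>0$, $r<0$, and $r=0$ respectively, so the argument must track the sign of $r$ throughout rather than merely the attainability of the signature. The secondary delicate point is the degenerate Type (4) case $\jv\not\in\mathrm{range}(\M_0)$, where the clean Schur-complement identity breaks down and the rank-one update must be handled separately; establishing in the reduction step that $\mathrm{rank}(G_0)=p+q$ is \emph{forced} (not merely bounded by $p+q$) through the compression relation $\F_{\D}(\el)=2(\I-\jv\el^{\top})G_0(\I-\el\jv^{\top})$ is the other place where the embedding-dimension hypothesis is essential.
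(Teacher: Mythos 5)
Your proposal is correct, and while it shares the paper's overall skeleton, its technical engine is genuinely different. Both proofs first reduce sphericity to a signature condition --- your compression relation $\F_{\D}(\el)=2(\I-\jv\el^{\top})G_0(\I-\el\jv^{\top})$ together with realizability pins down exactly the paper's criterion that $\mathrm{sign}(-\D+2r\J)=(p,q)$ for some $r>0$ --- and both then run the same four-case analysis on the same scalar invariant, since your $\gamma=\jv^{\top}\M_0^{+}\jv$ equals $2n\sum_{j}\beta_j^2/\lambda_j$ taken over the main eigenvalues $\lambda_j$ of $-\D$. The difference lies in how the inertia of $\M_0+r\jv\jv^{\top}$ is computed: the paper invokes Theorem 2.2 of \cite{NS16}, which gives the interlacing $\lambda_1<\mu_1<\cdots<\lambda_k<\mu_k$ of the main eigenvalues and a factorization of $\prod_i(\mu_i-x)$, and then decides each type by the sign of that polynomial at $x=0$; you instead use the Haynsworth inertia identity on the bordered matrix $\bigl(\begin{smallmatrix}\M_0&\jv\\ \jv^{\top}&-1/r\end{smallmatrix}\bigr)$ (valid when $\jv\in\mathrm{range}(\M_0)$), with a separate rank-one argument for Type (4), where $\jv\notin\mathrm{range}(\M_0)$. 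Your route is more self-contained (no main-eigenvalue machinery from \cite{NS16}), it makes the dependence on the sign of $r$ completely explicit --- the point you rightly identify as the crux, since Types (3) and (4) do attain signature $(p,q)$, but only at $r<0$ or $r=0$ --- and it treats the degenerate Type (4) case in more detail than the paper's one-line claim. What the paper's method buys in exchange is finer information: tracking all the perturbed main eigenvalues $\mu_i$, rather than only the determinant-like quantity at $0$, immediately yields Theorem~\ref{thm:spherical2} (the signature of $-\D+a\J$ for every $a>0$, hence the spherical embeddings in higher dimensions), which your identity could also deliver but which you do not develop. One caveat: the paper's literal definition of Type (i) (``$-\D$ satisfies the condition in Theorem~\ref{thm:dim_PMP}(i)'') is inconsistent with Remark~\ref{rem:type}; your case analysis silently adopts the operational convention of the Remark (Type (2) iff $\gamma<0$ iff $\mathrm{sign}(-\D)=(p,q+1)$), which is also the convention used in the paper's own proof, so your argument matches the intended statement.
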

\begin{proof}
 
Since the embedding dimension of $\D$ is $(p,q)$, there is $X\subset \mathbb{R}^{p,q}$ such that $\D=(||\x -\y||)_{\x, \y \in X}$. 
From the equality $||\x-\y||=||\x||+||\y||- 2\la \la \x, \y \ra \ra $, $X$ is in $S_{p,q}(r)$ for some $r>0$
if and only if the signature of $2 (\la\la \x,\y \ra\ra)_{\x, \y \in X} =
-\D +2 r \J$ is $(p,q)$ for some $r$. Therefore we consider the signature $-\D +a \J$ with $a>0$ for each Type (i). 

An eigenvalue of $-\D$ that is not main is an eigenvalue of $-\D + a \J$. Thus we consider the change of main eigenvalues from $-\D$ to $-\D+a\J$. Let $\lambda_1<\cdots<\lambda_k$ be the main eigenvalues of $-\D$, and $\mu_1< \cdots < \mu_l$ be those of $-\D +a\J$. 
From Theorem~2.2 in \cite{NS16}, it follows that 
$k=l$ and 
\[
f(x)=\prod_{i=1}^k (\mu_i-x)=\prod_{i=1}^k(\lambda_i-x)
\left( 1+ a \sum_{j=1}^k\frac{n \beta_j^2}{\lambda_j-x}\right),
\]
where $n$ is the order of $\D$. Moreover, 
$\lambda_1<\mu_1<\cdots<\lambda_k<\mu_k$ holds. 
%It therefore follows that ${\rm sign}(-\D +aJ)=(p+1,q-1),(p,q-1),(p,q)$ for ${\rm sing}(-\D)=(p,q)$. 

Suppose $\D$ is of Type (1). Then, the signature of $-\D$ is $(p+1,q+1)$ and $\lambda_i \ne 0$ for each $i$ since $E_0 \subset \jv^\perp$. 
Under the condition $\sum_{j=1}^k \beta_j^2/ \lambda_j=0$, we have $f(0)=\prod_{i=1}^k \lambda_i$, which does not depend on $a$. 
For this case, ${\rm sign}(-\D +a \J)=(p+1,q+1) \ne (p,q)$. Therefore, $\D$ does not have a spherical representation in $\R^{p,q}$.

Suppose $\D$ is of Type (2). Then, the signature of $-\D$ is $(p,q+1)$ and $\lambda_i \ne 0$ for each $i$ since $E_0 \subset \jv^\perp$. 
Thus, ${\rm sign}(-\D+a \J)=(p,q)$ if and only if $f(0)=0$, namely $a= -1/(\sum_{j=1}^kn \beta_j^2/\lambda_j)$. From the definition of Type (2),  $\sum_{j=1}^kn \beta_j^2/\lambda_j<0$ and we can take $a>0$. Therefore $\D$ has a spherical embedding in $\R^{p,q}$. 
Moreover, we have ${\rm sign}(-\D+a \J)=(p+1,q)$ for $a> -1/(\sum_{j=1}^kn \beta_j^2/\lambda_j)$, and ${\rm sign}(-\D+a \J)=(p,q+1)$ for $a< -1/(\sum_{j=1}^kn \beta_j^2/\lambda_j)$.

Suppose $\D$ is of Type (3). Then, the signature of $-\D$ is $(p+1,q)$, and  $\lambda_i \ne 0$ for each $i$ since $E_0 \subset \jv^\perp$. From the definition of Type (3),  $\sum_{j=1}^kn \beta_j^2/\lambda_j > 0$ and we cannot take $a>0$ such that $f(0)=0$. In this case, ${\rm sign}(-\D +aJ)=(p+1,q) \ne (p,q)$. Therefore, $\D$ does not have a spherical representation in $\R^{p,q}$.

Suppose $\D$ is of Type (4). Then the signature of $-\D$ is $(p,q)$, and  $\lambda_i = 0$ for some $i$ since $E_0  \not \subset \jv^\perp$. 
In this case, ${\rm sign}(-\D +a \J) = (p+1,q)$. Therefore, $\D$ does not have a spherical representation in $\R^{p,q}$.
\end{proof}
\begin{remark}
The dissimilarity matrix $\D$ does not have a zero entry except for diagonals if and only if the corresponding Gram matrix $-\D+a \J$ has entries $a$ only at diagonals. 
This is consistent with the conditions $0\not\in A(X)$ and $1 \not\in B(X)$ of the two upper bounds for $\R^{p,q}$ and $S_{p,q}$.
\end{remark}
\begin{remark}
The diagonal entries of an indefinite-distance matrix $\D$ are 0. This implies $-\D$ has a negative eigenvalue. Thus, a Euclidean distance matrix is of Type (1) or Type (2). 
Nozaki and Shinohara \cite{NS12} determined when a Euclidean 2-distance set is spherical based on the properties of its distance matrix (see Theorems~2.4 and 3.7 in \cite{NS12}). 
Theorem~\ref{thm:spherical} generalizes this result. 
\end{remark}

Any dissimilarity matrix can have a spherical representation with higher dimensionality.  
\begin{theorem} \label{thm:spherical2}
Let $\D$ be a dissimilarity matrix of embedding dimension $(p,q)$. Let $\lambda_i$ be the main eigenvalues of $-\D$. Then, the following hold. 
\begin{enumerate}
    \item If $\D$ is Type (1), then $-\D+ a\J$ is the Gram matrix of some set in $S_{p+1,q+1}$ for any $a>0$. 
    \item If $\D$ is Type (2), then $-\D+ a\J$ is the Gram matrix of some set in $S_{p,q+1}$ for any $0<a<1/(\sum_{j=1}^kn \beta_j^2/\lambda_j)$, in $S_{p,q}$ for $a=1/(\sum_{j=1}^kn \beta_j^2/\lambda_j)$, and in $S_{p+1,q}$ for $a>1/(\sum_{j=1}^kn \beta_j^2/\lambda_j)$. 
    \item If $\D$ is Type (3), then $-\D+ a\J$ is the Gram matrix of some set in $S_{p+1,q}$ for any $a>0$. 
    \item If $\D$ is Type (4), then $-\D+ a\J$ is the Gram matrix of some set in $S_{p+1,q}$ for any $a>0$.
\end{enumerate}
\end{theorem}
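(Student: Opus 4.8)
The plan is to reduce every assertion to a computation of the signature of the symmetric matrix $-\D+a\J$, and then to read off those signatures from the eigenvalue-perturbation analysis already carried out in the proof of Theorem~\ref{thm:spherical}. The key starting observation is that, since the diagonal entries of an indefinite-distance matrix vanish, the matrix $G=-\D+a\J$ has constant diagonal equal to $a$. By the diagonalization of symmetric matrices used throughout Section~\ref{sec:3} (Gower's construction), a symmetric matrix of signature $(P,Q)$ is the Gram matrix $(\la\la\x,\y\ra\ra)_{\x,\y}$ of $n$ points in $\R^{P,Q}$; because every diagonal entry of $G$ equals $a>0$, all these points satisfy $\la\la\x,\x\ra\ra=a$ and hence lie on $S_{P,Q}(a)$, which is homothetic to the unit sphere $S_{P,Q}$. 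Thus it suffices to show that $\mathrm{sign}(-\D+a\J)$ equals the index pair claimed in each of the four cases.

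For the signature computation I would invoke the same perturbation result as in Theorem~\ref{thm:spherical} (Theorem~2.2 in \cite{NS16}): a non-main eigenvalue of $-\D$ is unaffected by adding $a\J$ (its eigenvector lies in $\jv^\perp$, where the rank-one term $\J=\jv\jv^\top$ acts trivially), while the distinct main eigenvalues $\lambda_1<\cdots<\lambda_k$ of $-\D$ move to interlacing values $\mu_1<\cdots<\mu_k$ with $\lambda_i<\mu_i<\lambda_{i+1}$ for $a>0$, and
\[
\prod_{i=1}^k \mu_i=f(0)=\prod_{i=1}^k \lambda_i\left(1+a\sum_{j=1}^k\frac{n\beta_j^2}{\lambda_j}\right).
\]
Since adding $a\J$ only displaces the main eigenvalues, the signature of $-\D+a\J$ can differ from that of $-\D$ only when a main eigenvalue crosses $0$; by the strict interlacing, the only main eigenvalue that can straddle $0$ is the unique $\lambda_m$ with $\lambda_m<0<\lambda_{m+1}$, and the sign of $\mu_m$ is pinned down by the sign of $f(0)$ together with the known signs of the remaining $\mu_i$.

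With this in hand each case is short. In Types (1)--(3) one has $0\notin\{\lambda_i\}$ (since $E_0\subset\jv^\perp$), so the sign of $f(0)$ is governed by the factor $1+a\sum_j n\beta_j^2/\lambda_j$. For Type (1) this sum vanishes, so $f(0)=\prod\lambda_i$ is independent of $a$, $\mu_m$ keeps the sign of $\lambda_m$, and $\mathrm{sign}(-\D+a\J)=\mathrm{sign}(-\D)=(p+1,q+1)$ for every $a>0$. For Type (3) the sum is positive, so $1+a\sum_j n\beta_j^2/\lambda_j>0$ never vanishes and the signature is frozen at $(p+1,q)$. For Type (2) the sum is negative, so the factor changes sign as $a$ passes the threshold $a^{*}=-1/\bigl(\sum_j n\beta_j^2/\lambda_j\bigr)>0$; below $a^{*}$ the signature is $(p,q+1)$, at $a^{*}$ the eigenvalue $\mu_m$ is exactly $0$ and the signature is $(p,q)$, and above $a^{*}$ it is $(p+1,q)$, exactly as found in Theorem~\ref{thm:spherical}. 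For Type (4), $0$ is a main eigenvalue (so $E_0\not\subset\jv^\perp$); the strict interlacing forces this main eigenvalue to move to a strictly positive value $\mu>0=\lambda$ for every $a>0$, while all other eigenvalues keep their signs and the non-main part of the $0$-eigenspace stays at $0$, so $\mathrm{sign}(-\D+a\J)=(p+1,q)$ for all $a>0$.

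The main obstacle is the bookkeeping in this signature argument rather than any single hard estimate: one must verify that, apart from the single main eigenvalue straddling $0$ (or, in Type (4), equal to $0$), no eigenvalue can change sign---which is precisely what the strict interlacing $\lambda_i<\mu_i<\lambda_{i+1}$ guarantees---and that the sign of the ambiguous $\mu_m$ is correctly recovered from $f(0)$. Once the reduction to $\mathrm{sign}(-\D+a\J)$ and the perturbation formula are in place, the remaining steps are routine, and Types (1)--(3) reuse the computation already performed for Theorem~\ref{thm:spherical}.
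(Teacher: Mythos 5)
Your proposal is correct and takes essentially the same route as the paper: the paper proves Theorem~\ref{thm:spherical2} simply by pointing back to the signature computations already carried out in the proof of Theorem~\ref{thm:spherical} (non-main eigenvalues fixed, main eigenvalues interlacing after adding $a\J$ via Theorem~2.2 of \cite{NS16}, with the sign of the ambiguous eigenvalue read off from $f(0)$), which is exactly the argument you reuse, supplemented by the correct observation that constant diagonal $a$ places the representing points on a sphere of radius $a$. Note also that your threshold $a^{*}=-1/\bigl(\sum_{j}n\beta_j^2/\lambda_j\bigr)>0$ is the one consistent with the paper's proof of Theorem~\ref{thm:spherical}; the missing minus sign in the statement of Theorem~\ref{thm:spherical2} is a typo in the paper.
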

\begin{proof}
This theorem is proved in the proof of Theorem~\ref{thm:spherical}. 
\end{proof}

%\begin{remark}
%For a dissimilarity matrix $\D$ of embedding dimension $(p,q)$ with $p=q$, a dissimilarity matrix $-\D$ is also of embedding dimension $(q,p)=(p,q)$. 
%In this case, $\D$ is of Type (2) if and only if $-\D$ is of Type (3). Therefore, for $p=q$, we check if $\D$ is Type (2) or Type (3) to determine the existence of the spherical representation in the embedding dimension. 
%\end{remark}

To determine the largest proper $2$-indefinite-distance sets in $S_{p,q}$, we collect largest $2$-indefinite-distance sets in $\R^{p,q}$ of Type (2), $\R^{p-1,q}$ of Types (3) and (4), and $\R^{p-1,q-1}$ of Type (1). 
For 2-indefinite-distance sets,  we verify the signature of $\D=\A_1+(\lambda/(1+\lambda))\A_2$ (or $-\A_1-(\lambda/(1+\lambda))\A_2$) to determine whether the representation in the embedding dimension is spherical.  
The eigenvalue $\lambda$ is an algebraic number that can be expressed by the irreducible polynomial $f_{\lambda}(x) \in \mathbb{Q}[x]$ with $f_\lambda(\lambda)=0$ and an interval $[a,b]$ with $a,b\in \mathbb{Q}$ containing $\lambda$. The characteristic polynomial $g(x)$ of $-\D$ is a polynomial over 
$\mathbb{Q}[\lambda]$. 
According to Descartes’ rule of signs, if the coefficients $g_i(\lambda)$ of $x^i $ in $g(x)$ are determined, then we can know the signature of $-\D$. 
Indeed, if we can take a rational interval $[a,b]$ containing $\lambda$ such that $[a,b]$ is disjoint from an interval containing each root of $g_i(x)$, then the sign of $g_i(a)$ is the same as $g_i(\lambda)$. 
From Remark \ref{rem:type} and Theorem \ref{thm:spherical2}, we can obtain the smallest spherical dimension where the graph can be representable. 
Using the collections of representable graphs calculated in Section \ref{sec:4}, we can determine the largest proper spherical 2-indefinite-distance sets.   
Note that for $p=q$, if $\D$ is of Type (3), then $-\D$ is of Type (2) from Remark \ref{rem:type}. 

Using this algorithm, we can determine whether larger proper 
2-indefinite-distance sets obtained in the previous section are spherical
in the embedding dimension. See the appendix section \ref{sec:B}. 
Table~\ref{tab:2} lists the size and number of the largest spherical proper 2-indefinite-distance sets in $\R^{p,q}$. 

\begin{table}
    \centering
    \begin{tabular}{c|ccccccc}
         $q$ $\backslash$ $p$&1 & 2& 3& 4&  5& 6 &7\\
         \hline 
          0 &$3_1$& $5_1$ & $6_6$ & $10_1$ & $16_1$ & $27_1$ & $28_1$ \\ 
          1 &$3_{\infty}$ &$4_{\infty}$ &$7_3$&$10_1$&$13_3$& $22_1$& \\
          2 & & $7_1$ &$8_3$ & $10_3$ &$13_1$& &\\
          3 & & & $9_{14}$ &$11_3$ & & &
    \end{tabular}
    \caption{The cardinality of a largest proper spherical 2-indefinite-distance set in $\mathbb{R}^{p,q}$}
    \label{tab:2}
\end{table}

\section{Concluding remarks}
In this section, we discuss several factors related to $s$-indefinite-distance sets in $\R^{p,q}$. 
\begin{remark}
From the observation of examples obtained by PC calculation, we construct an infinite series of 2-indefinite-distance sets in $\R^{p,1}$ that have large cardinalities. 
We consider graph $G=(V,E)$, where $V=V_1\cup V_2$, $E=E_1\cup E_2 \cup E_3$, and
\begin{align*}
V_1&=\{v_{i,1}:1\le i\le n\} \cup \{v_{i,2}: 1\le i\le n\},\\
V_2&=\{w_{i,j}:1\le i<j\le n\},\\
E_1&=\{\{v_{i,1},v_{j,2}\}:i\ne j\},\\
E_2&=\{\{w_{i,j},w_{k,l}\}: |\{i,j\} \cap \{k,l\}|=\emptyset\},\\
E_3&=\{\{v_{i,j},w_{k,l}\}:i \in \{k,l\}\}.
\end{align*}
The induced subgraph of $G$ on $V_1$ is obtained from the complete bipartite graph by removing a perfect matching, and the induced subgraph of $G$ on $V_2$ is the complement of the Johnson graph $J(n,2)$. 
The representation of $G$ with distances $a=4,b=2$ is in $\R^{n,1}$ for $n\geq 7$, in $\R^{n}$ for $n=6$, and $\R^{n+1}$ for $n=3,4,5$. For $n\geq 7$, 
we provide the coordinates of a representation $X$ of $G$  
as follows.  
Let 
\begin{align*}
x_{i,1}&=\frac{3}{n}\sum_{j=1}^n{\ev}_j-{\ev}_i+c_1 {\ev}_{n+1}+c_2 {\ev}_{n+2},\\
x_{i,2}&=\frac{3}{n}\sum_{j=1}^n{\ev}_j-{\ev}_i+c_3 {\ev}_{n+1}+c_4 {\ev}_{n+2},\\
y_{i,j}&={\ev}_i+{\ev}_j, 
\end{align*}
where ${\ev}_i$ is the vector with 1 in $i$-th component and 0 in all others.  
%x=3/4, y=sqrt(n(25n-144))/(4n), z=( 3(2n-9)+4sqrt(3n(n-6))y )/(4(n-9)),
%w=xz/y+(2n-9)/(ny)
\[c_1=\frac{3}{4}, c_2=\frac{\sqrt{n(25n-144)}}{4n}, 
c_3=\frac{3(2n-9)+4\sqrt{3n(n-6)}c_2}{4(n-9)}, c_4=\frac{(c_1c_3+2)n-9}{nc_2}\]
if $n\ne 9$, and 
\[c_1=c_2=\frac{3}{4}, c_3=-\frac{2}{3}, c_4=\frac{2}{3}\]
if $n=9$. 
Then, the representation of $G$ is
\[X=\{x_{i,1}\mid 1\le i\le n\} \cup \{x_{i,2}\mid 1\le i\le n\} \cup 
\{y_{i,j}\mid 1\le i< j\le n\}\subset \R^{n+1,1} \]
with correspondences $v_{i,j} \mapsto x_{i,j}$ and $w_{i,j}\mapsto y_{i,j}$. 
Since the sum of the first $n$ coordinates of each vector in $X$ is equal to 2, 
$X$ can be interpreted as a subset of $\R^{n,1}$. 
Their cardinalities are $|V|=2n+n(n-1)/2=n(n+3)/2$, and the value is 1 fewer than the upper bound of Theorem \ref{thm:ab_bound_sphere}. For $p\geq 10$, there exist tight sets \cite{L97}, and for $p=7,8,9$, the examples constructed here are the candidates for the largest $2$-indefinite-distance sets in $S_{p,1}$. The set for $n=6$ is the largest $2$-distance set in $\R^6$ \cite{L97}. 
\end{remark}

\begin{remark}
The largest $2$-indefinite-distance set in $\R^{6,1}$ (see Subsection \ref{sec:A12}) has a Johnson graph structure and the complete graph. The graph structure can be naturally generalized. Apparently, we can obtain 2-indefinite-distance sets in $\R^{p,1}$ for $p\geq 7$ with $1+p+p(p-1)/2$ points. We can rediscover the known largest 2-distance set in $\R^5$ with 16 points in this series. 
\end{remark}

\begin{remark}
We could not determine the largest proper 2-indefinite-distance sets in $\R^{1,1}$ with $0 \in A(X)$. 
For the case $0 \in A(X)$, it is difficult to consider representations of a dissimilarity matrix because we cannot determine whether the map of a representation is bijective.  
\end{remark}

\begin{remark}
If a dissimilarity matrix $a \A_1+b \A_2$ with $a<0$ and $b<0$ is representable in $\R^{p,1}$, then the cardinality is small.  
 If the order of the corresponding graph is at least the Ramsey number $R(3,3)=6$, then there are cliques or co-cliques of size 3 in the graph and $q\geq 2$. This implies that the order of the graph is smaller than $R(3,3)=6$. 
\end{remark}

\begin{remark}
Let $X$ be an $s$-indefinite-distance set in $\R^{p,q}$ with $0 \not\in A(X)=\{\alpha_1,\ldots , \alpha_s \}$. 
If  the cardinality of $X$ is at least  $2\binom{p+q+s-1}{s-1}+2 \binom{p+q+s-2}{s-2}$, then 
\[
K_i=\prod_{j=1,\ldots, s: j\ne i}
\frac{\alpha_j}{\alpha_j-\alpha_i}
\]
is an integer for any $i \in \{1,\ldots, s\}$, and $|K_i|$ is bounded above by a value that depends only on $p,q,s$.  This is proven in the same manner as in \cite{N11}. For $s=2$ and $p+q\leq 7$, the upper bound on $|K_i|$ is $|K_i| \leq 2$; that is,  we may normalize $(a,b)=\pm (1,1/2)$ ($|a|>|b|$). 
If the size of a largest proper 2-indefinite-distance set in $\mathbb{R}^{p,q}$ is at least $2(p+q)+4$, then we do not have to deal with other distances $b$. However, the size of a largest proper 2-indefinite-distance is small except for $(p,q)=(6,1)$, and we must check all the possibilities of distance $b$. 
\end{remark}

\bigskip

\noindent
\textbf{Acknowledgments.} 
The authors would like to thank the anonymous referees who provided valuable suggestions for improving the initial version of our manuscript.
 Nozaki is supported by JSPS KAKENHI Grant Numbers  19K03445 and 20K03527. 
Shinohara is supported by JSPS KAKENHI Grant Numbers 18K03396 and 22K03402. 
Suda is supported by JSPS KAKENHI Grant Numbers 18K03395 and 22K03410.

\quad 

\noindent
\textbf{Declarations}

\quad 

\noindent 
\textbf{Conflict of Interest}
The authors state
that there is no conflict of interest.

\appendix

\section{Figures or matrices of largest proper 2-indefinite-distance sets}
\label{sec:A}
\subsection{$(p,q)=(1,1)$}
From the algorithm written in Section \ref{sec:4}, there exists no proper 2-indefinite-distance set of 5 points. By applying Theorem~\ref{thm:dimrep} to all graphs of order 4, we can prove there exists no proper 2-indefinite-distance set of 4 points. For the dissimilarity matrix 
\begin{equation} \label{eq:(1,1)}
\M_1=\begin{pmatrix}
0&1&0\\
1&0&0\\
0&0&0
\end{pmatrix}+
b\begin{pmatrix}
0&0&1\\
0&0&1\\
1&1&0
\end{pmatrix}, (\text{{\it resp. }} \M_2=-\begin{pmatrix}
0&1&0\\
1&0&0\\
0&0&0
\end{pmatrix}+
b\begin{pmatrix}
0&0&1\\
0&0&1\\
1&1&0
\end{pmatrix} )
\end{equation}
if $b<1/4$ ({\it resp.} $b>-1/4$) holds, then the embedding dimension of $\M_1$ ({\it reps.} $\M_2$) is $(1,1)$. There are infinitely many proper 2-indefinite-distance sets of 3 points in $\R^{1,1}$. 
\subsection{$(p,q)=(2,1)$, 8 graphs on 5 vertices}

\includegraphics[width=3cm]{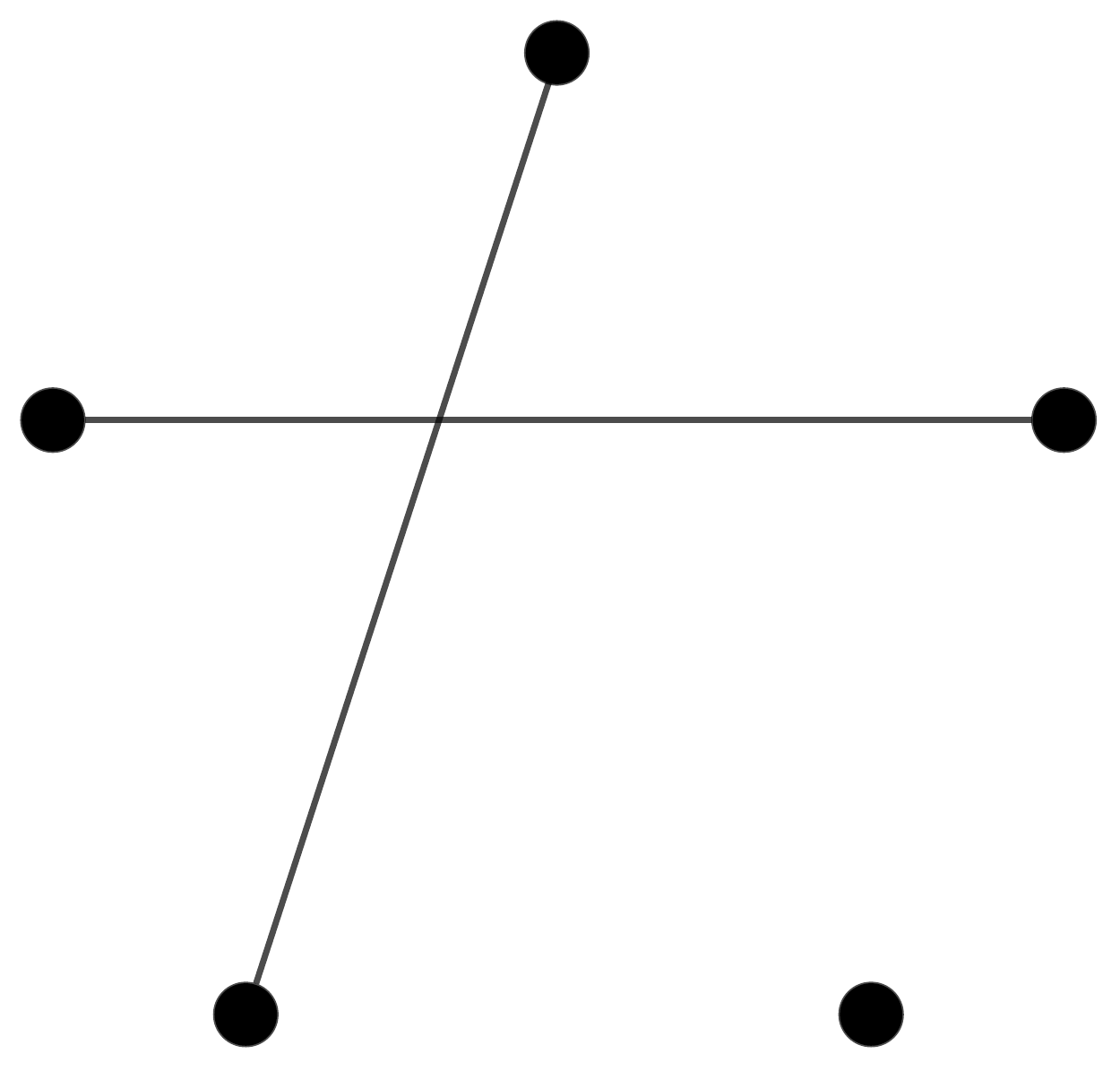}\hspace{1cm}
\includegraphics[width=3cm]{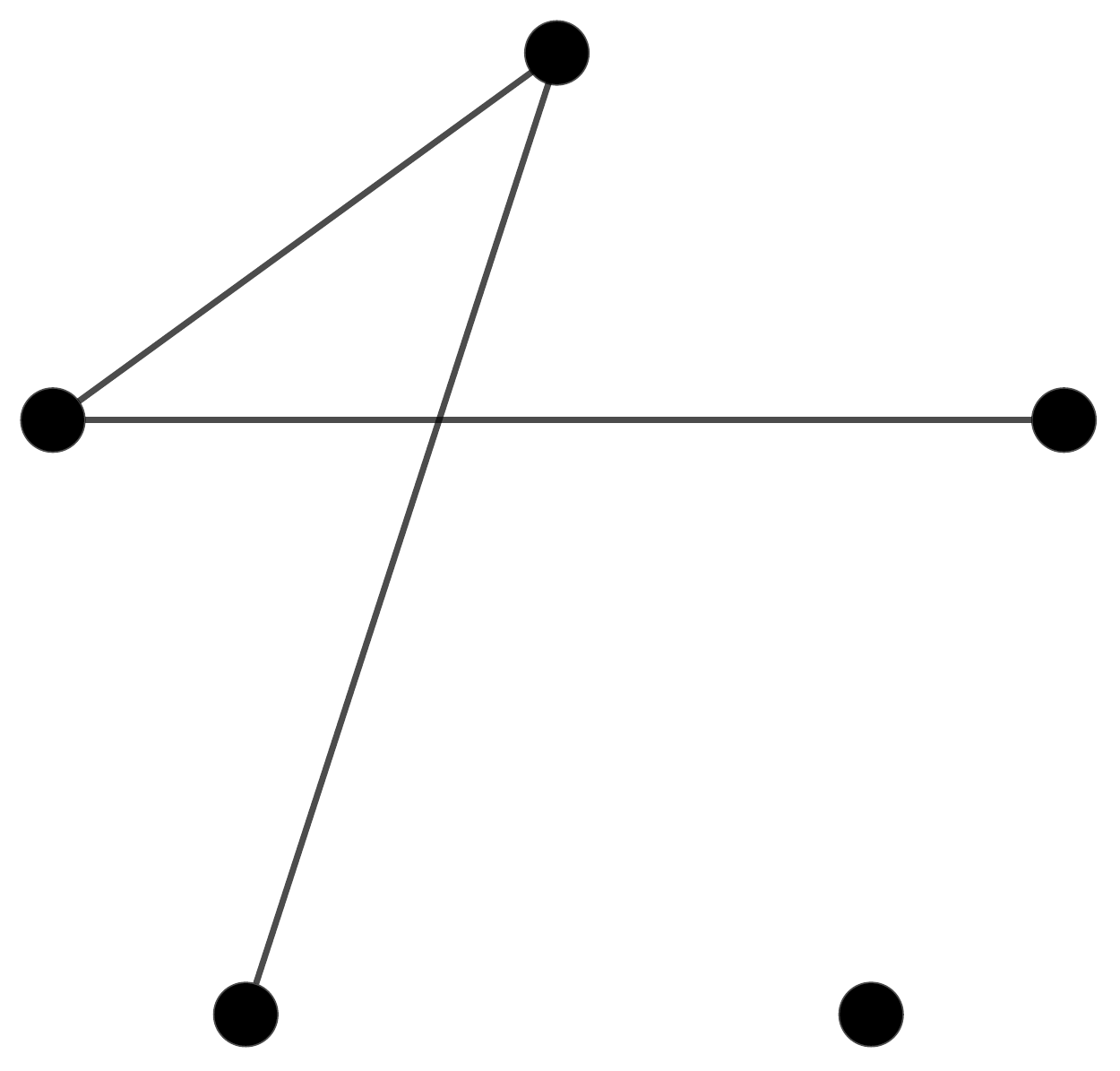}\hspace{1cm}
\includegraphics[width=3cm]{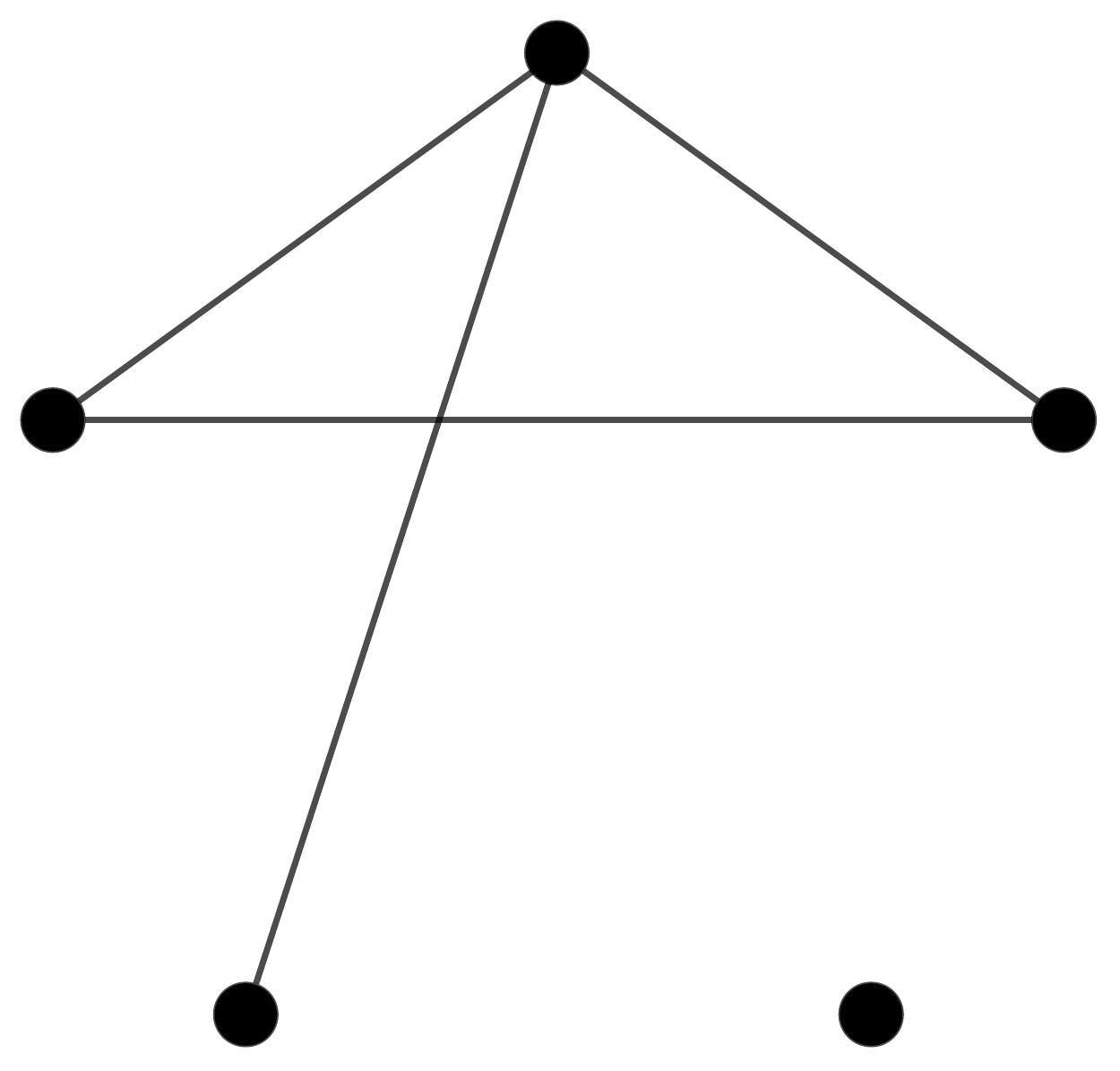}\hspace{1cm}
\includegraphics[width=3cm]{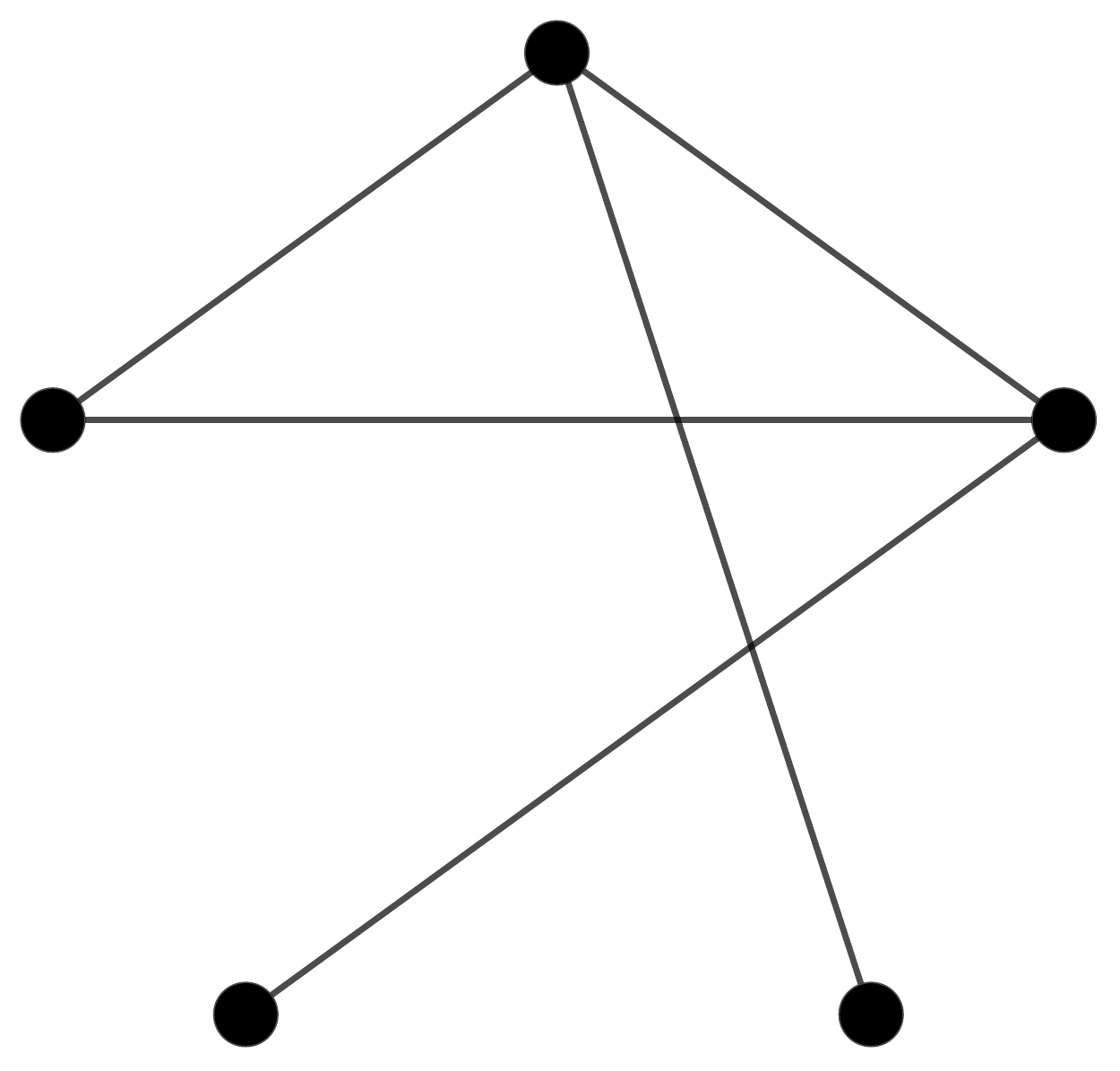}\hspace{1cm}
\\ \quad \\
\includegraphics[width=3cm]{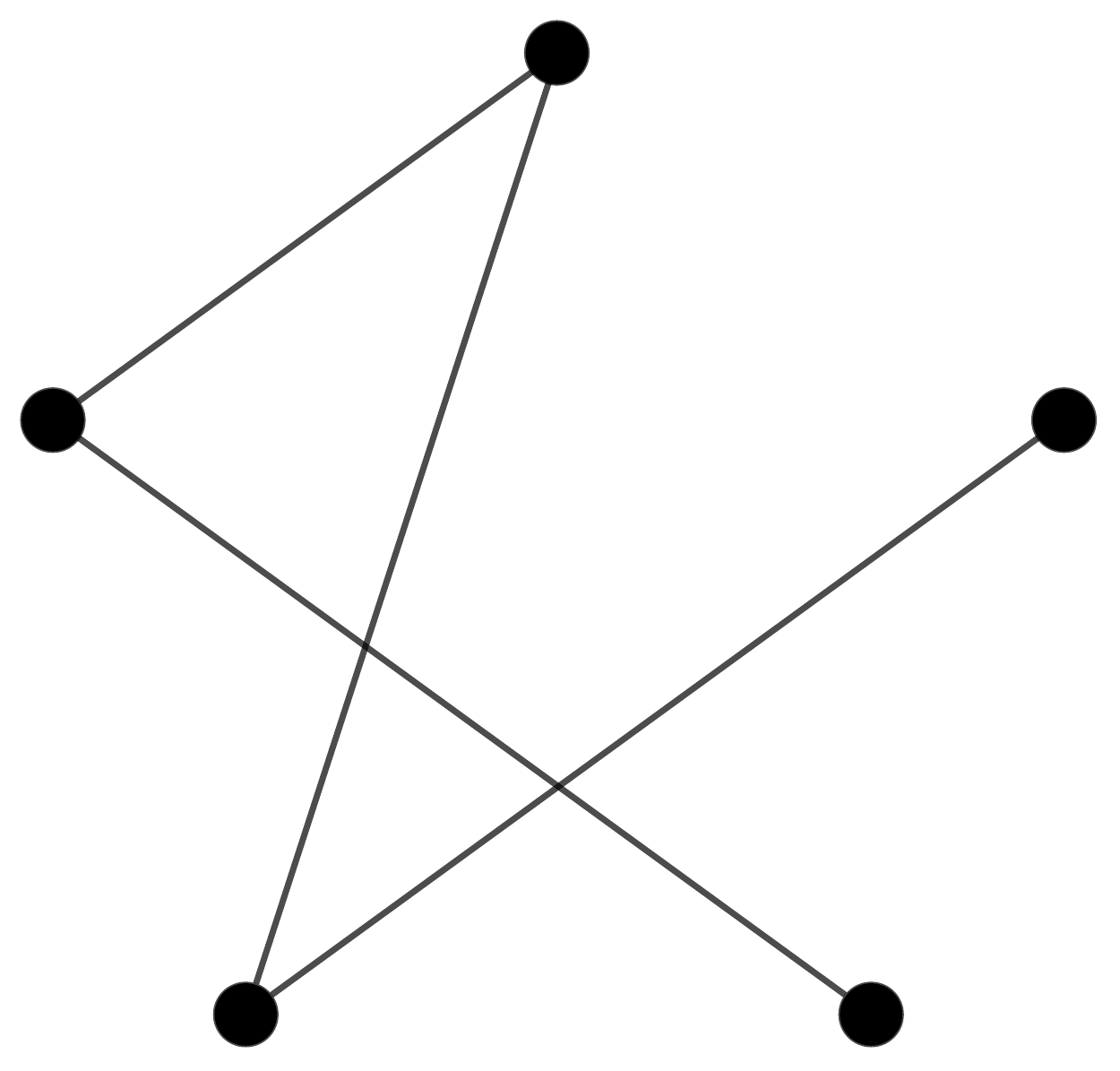}\hspace{1cm}
\includegraphics[width=3cm]{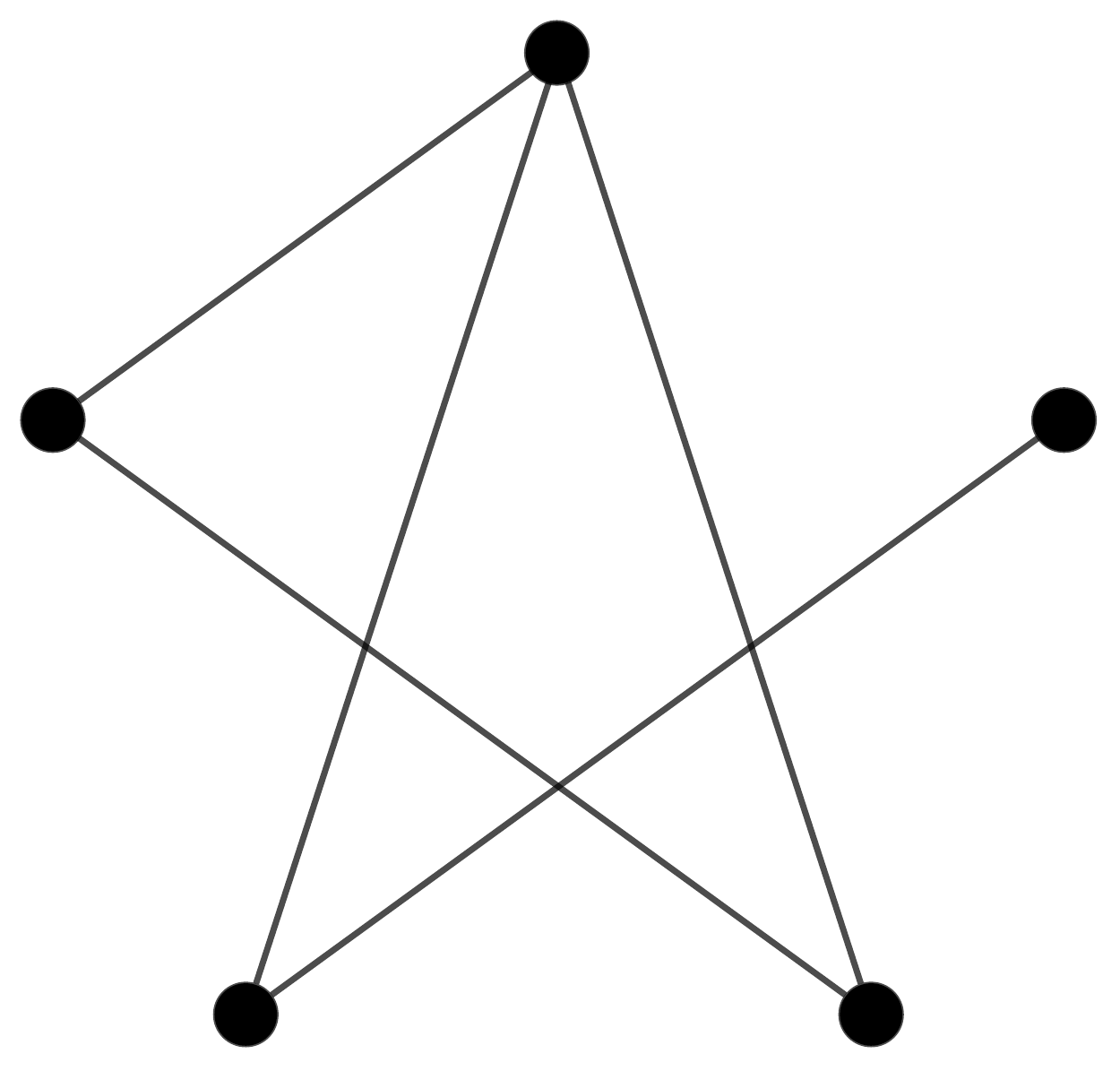}\hspace{1cm}
\includegraphics[width=3cm]{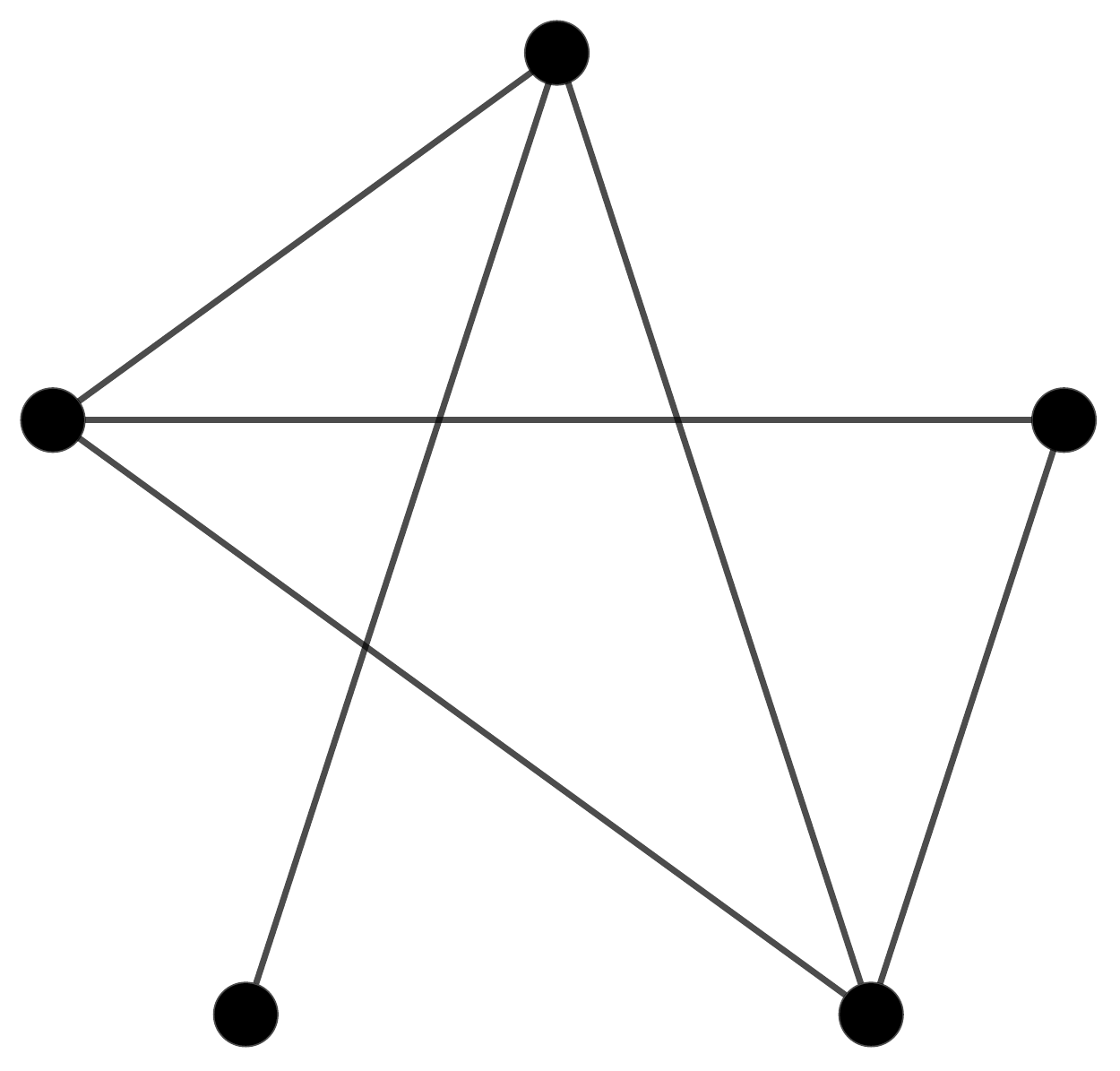}\hspace{1cm}
\includegraphics[width=3cm]{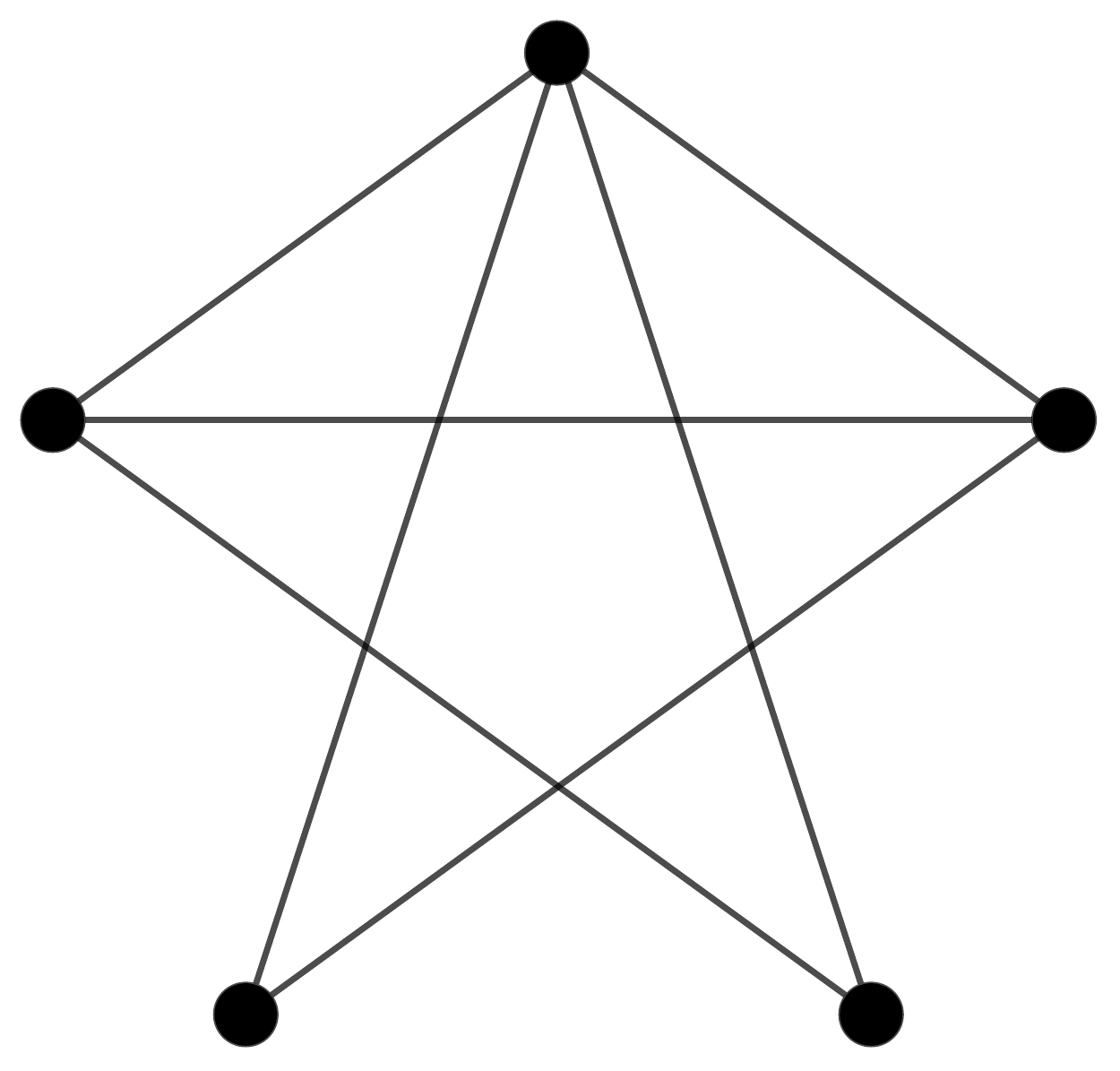}\hspace{1cm}
\\ \quad \\

For these 8 graphs, the distances are $a=1$ and $b=\lambda/(1+\lambda)$, where 
$\lambda=1/5$, the second-smallest roots of 
$x^2 + (1/5) x - 1/5$, $x^3 + (3/5)x^2 - x + 1/5$, $x^2 + x - 1/5$, $x^2 + (8/5)x - 1/5$, $x^3 + x^2 - (8/5)x - 4/5$, $x^3 + (7/5)x^2 - x - 3/5$, $x^2 + (9/5)x + 3/5$, respectively. 

\subsection{$(p,q)=(2,2)$, 1 graph on 7 vertices} 
The graph is the 7-gon and the distances are $a=1$ and $b=\lambda/(1+\lambda)$,  where $\lambda$ is the second-smallest root of $x^3+x^2-2x-1$. 

\subsection{$(p,q)=(3,1)$, 3 graphs on 7 vertices}

\includegraphics[width=3cm]{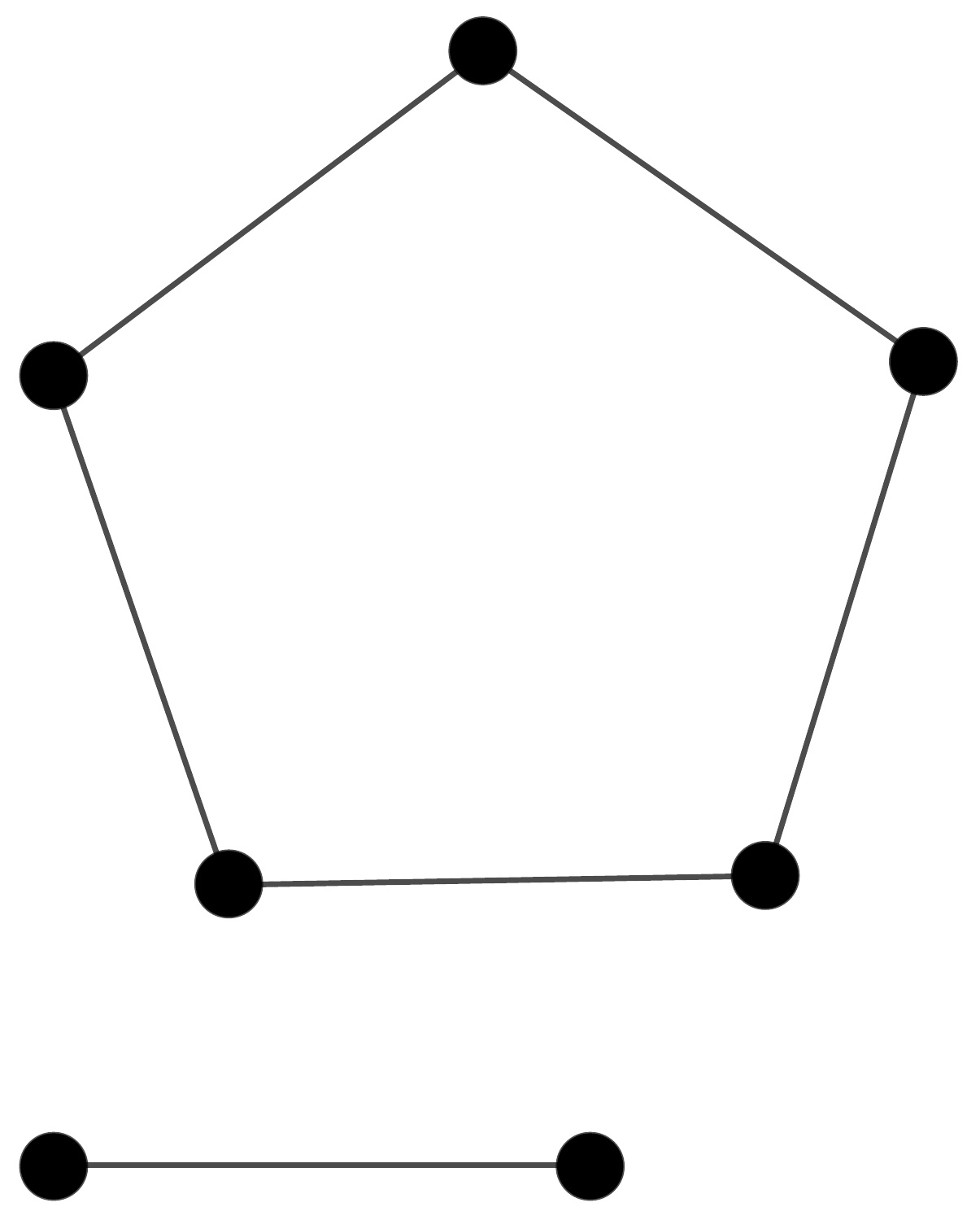}\hspace{1cm}
\includegraphics[width=3cm]{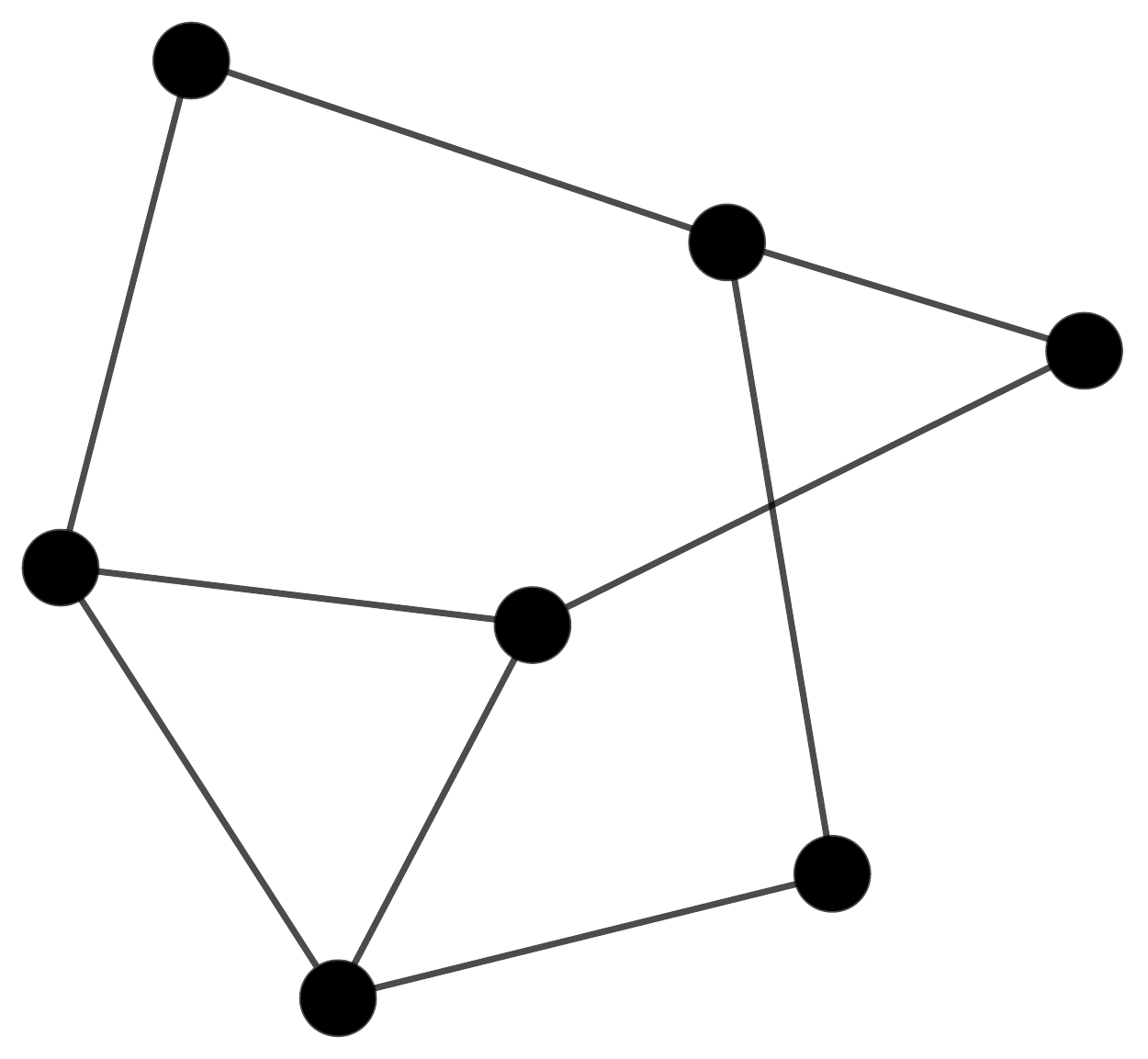}\hspace{1cm}
\includegraphics[width=3cm]{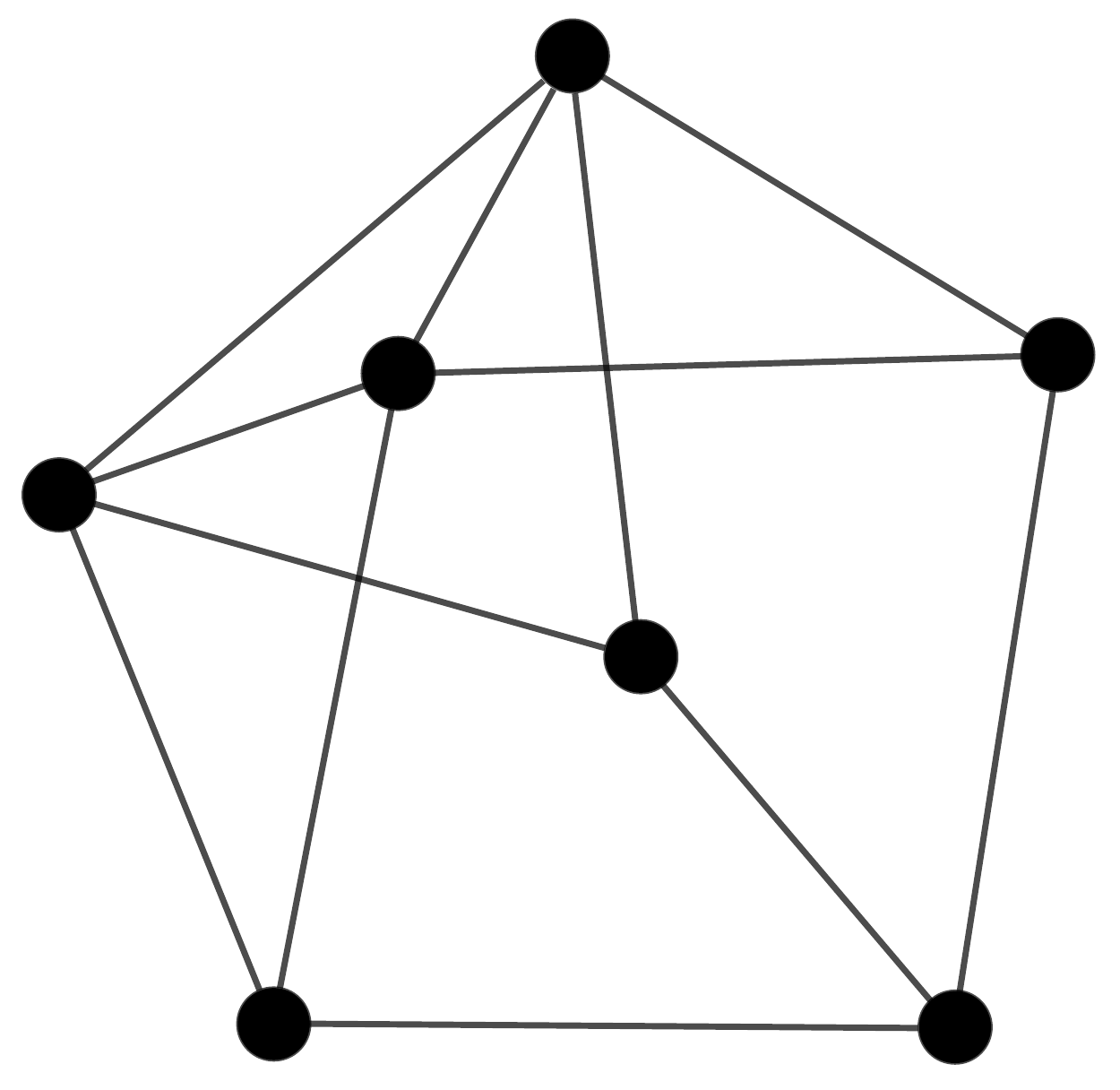}
\\ \quad \\
For the three graphs, the distances are $a=1$ and $b=\lambda/(1+\lambda)$, where $\lambda=(-1+\sqrt{5})/2$. 

\subsection{$(p,q)=(3,2)$, 3 graphs on 8 vertices}

\includegraphics[width=3cm]{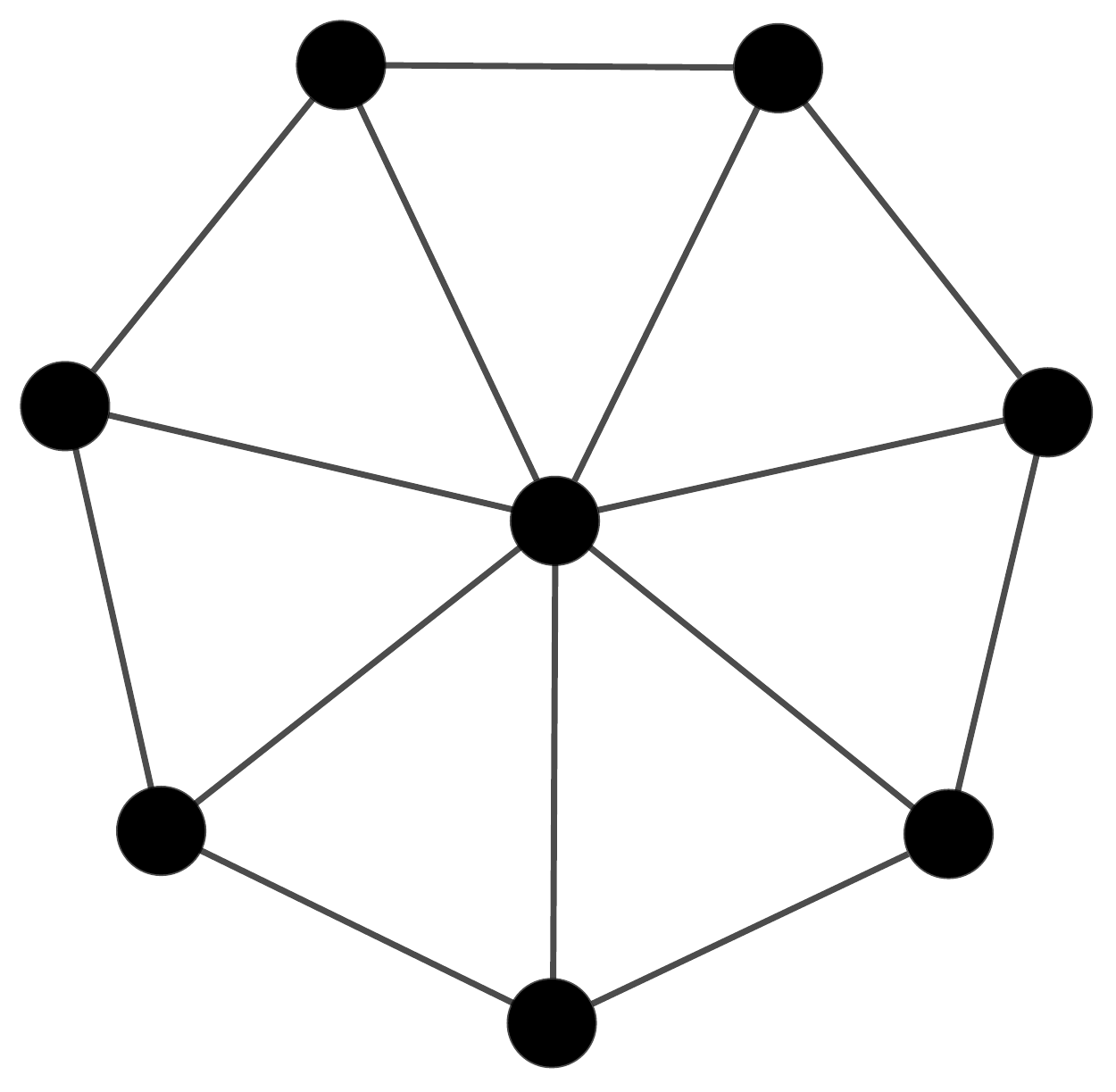}\hspace{1cm}
\includegraphics[width=3cm]{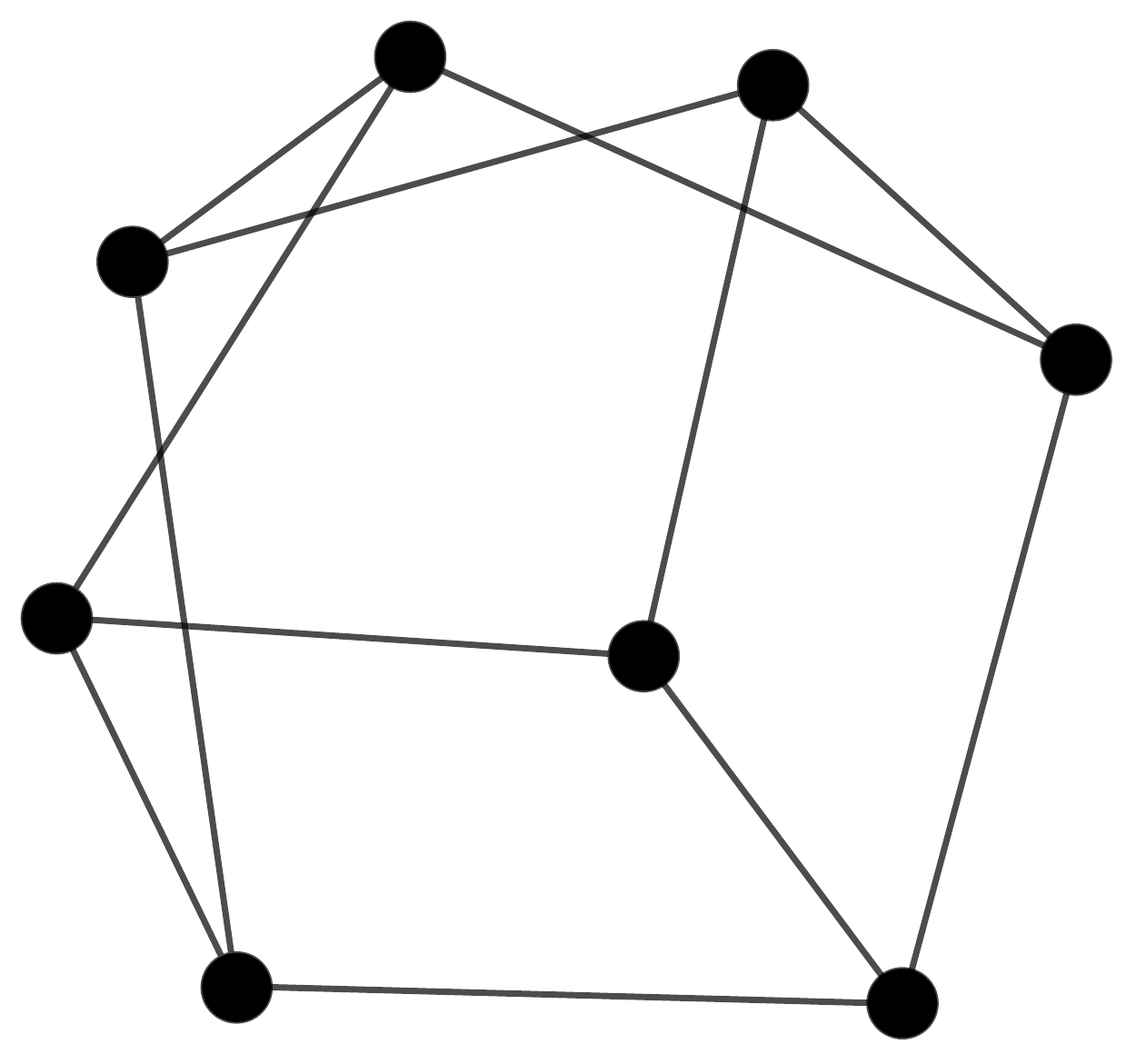}\hspace{1cm}
\includegraphics[width=3cm]{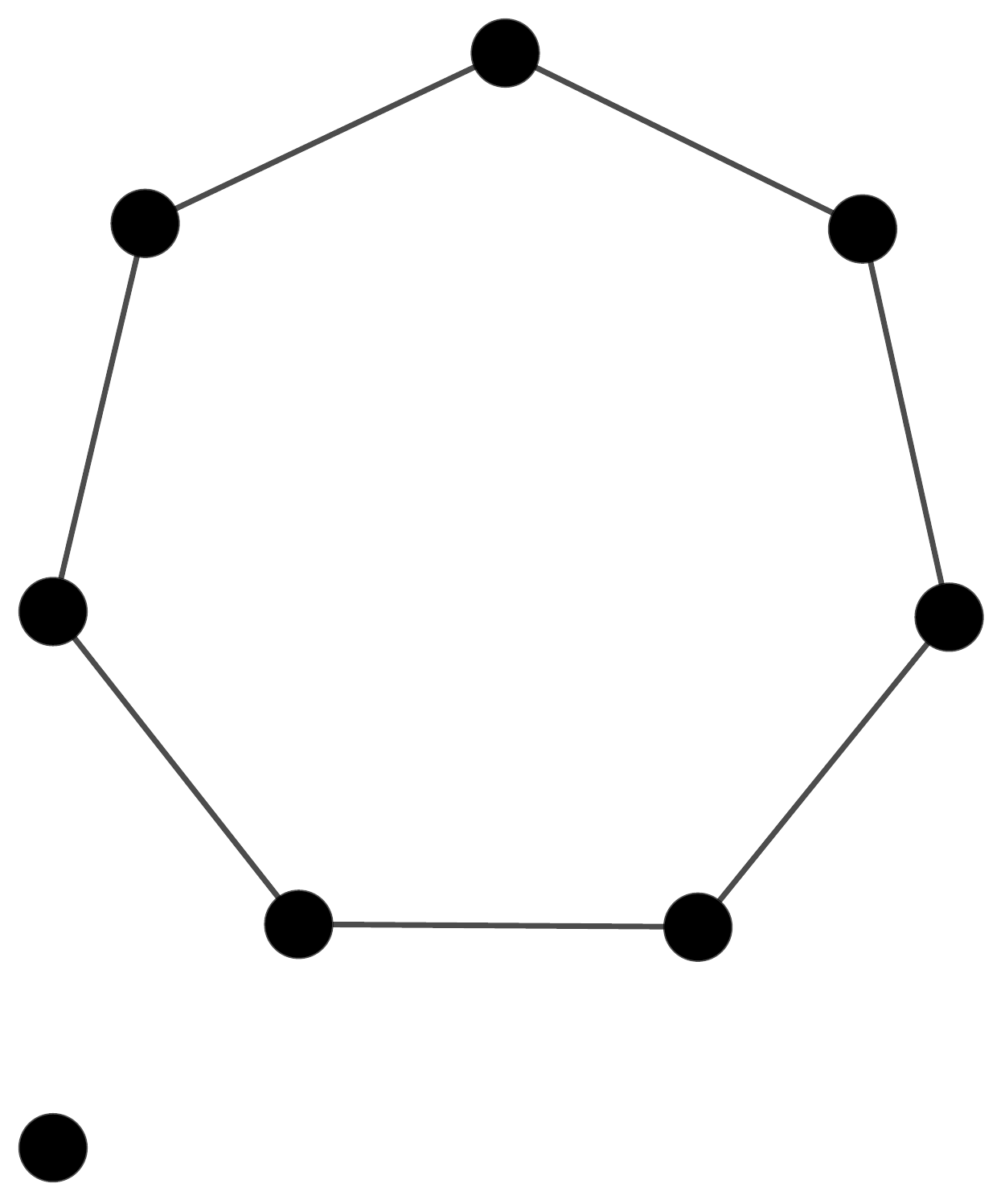}
\\ \quad \\
For the left graph, 
the distances are $a=1$ and $b=\lambda/(1+\lambda)$ where $\lambda$ is the second-smallest root of $x^3+x^2-2x-1$. 
For the center graph, 
the distances are  $a=1$ and $b=\lambda/(1+\lambda)$, where $\lambda=-1+\sqrt{2}$. 
For the right graph,
the distances are  $a=-1$ and $b=-\lambda/(1+\lambda)$ where $\lambda$ is the second-smallest root of $x^3+x^2-2x-1$. 

\subsection{$(p,q)=(3,3)$, 14 graphs on 9 vertices}

\includegraphics[width=3cm]{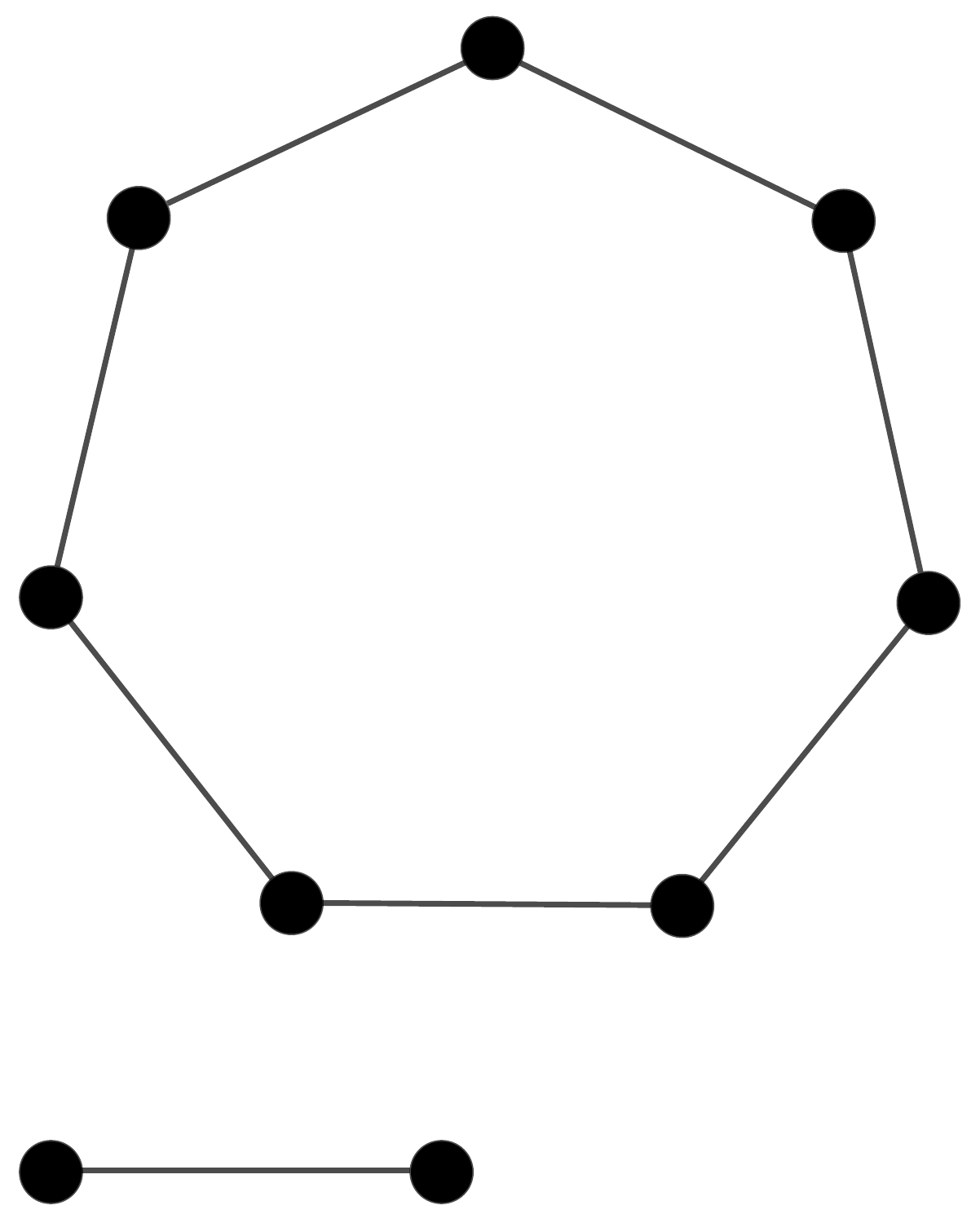}\hspace{1cm}
\includegraphics[width=3cm]{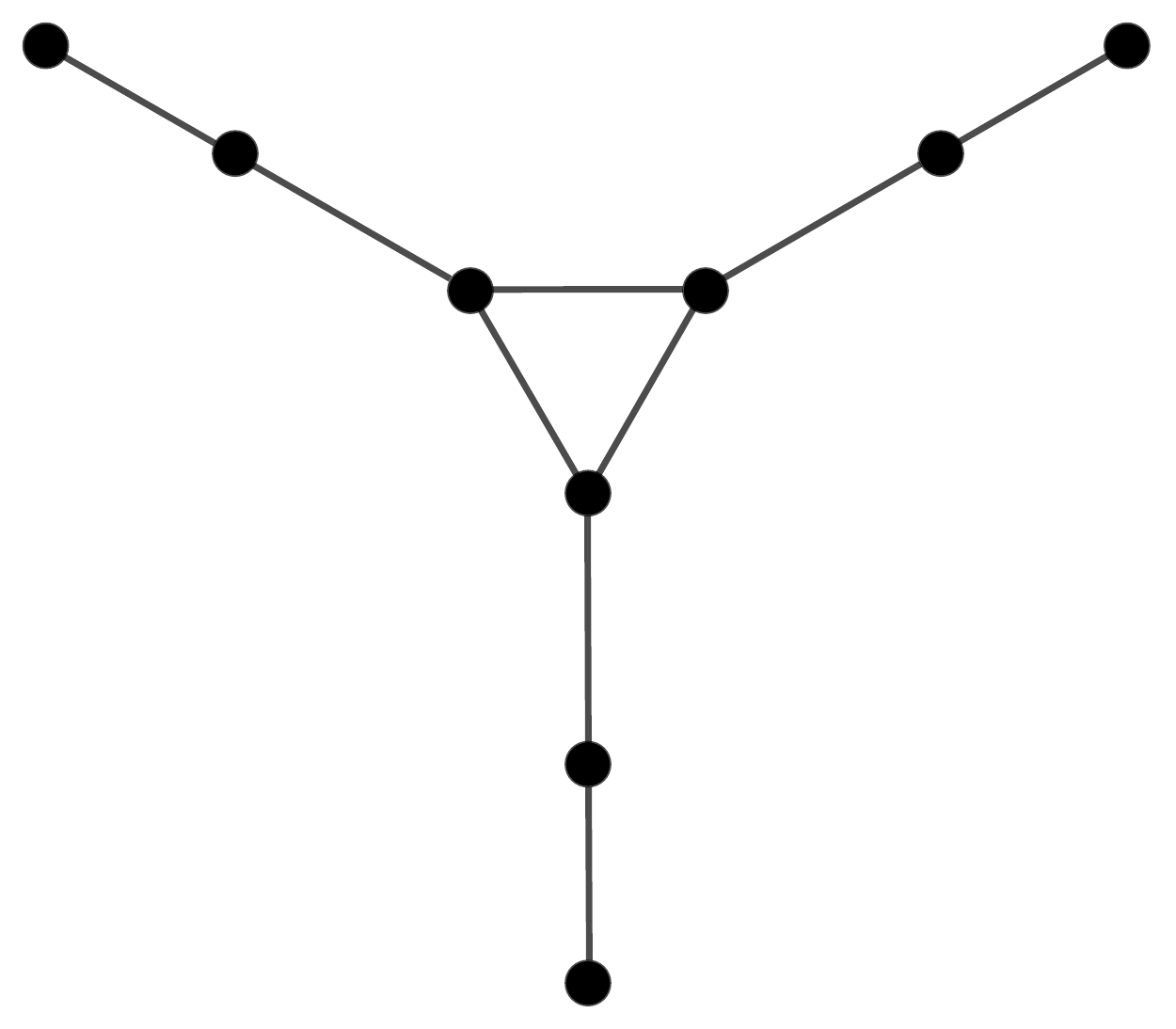}\hspace{1cm}
\includegraphics[width=3cm]{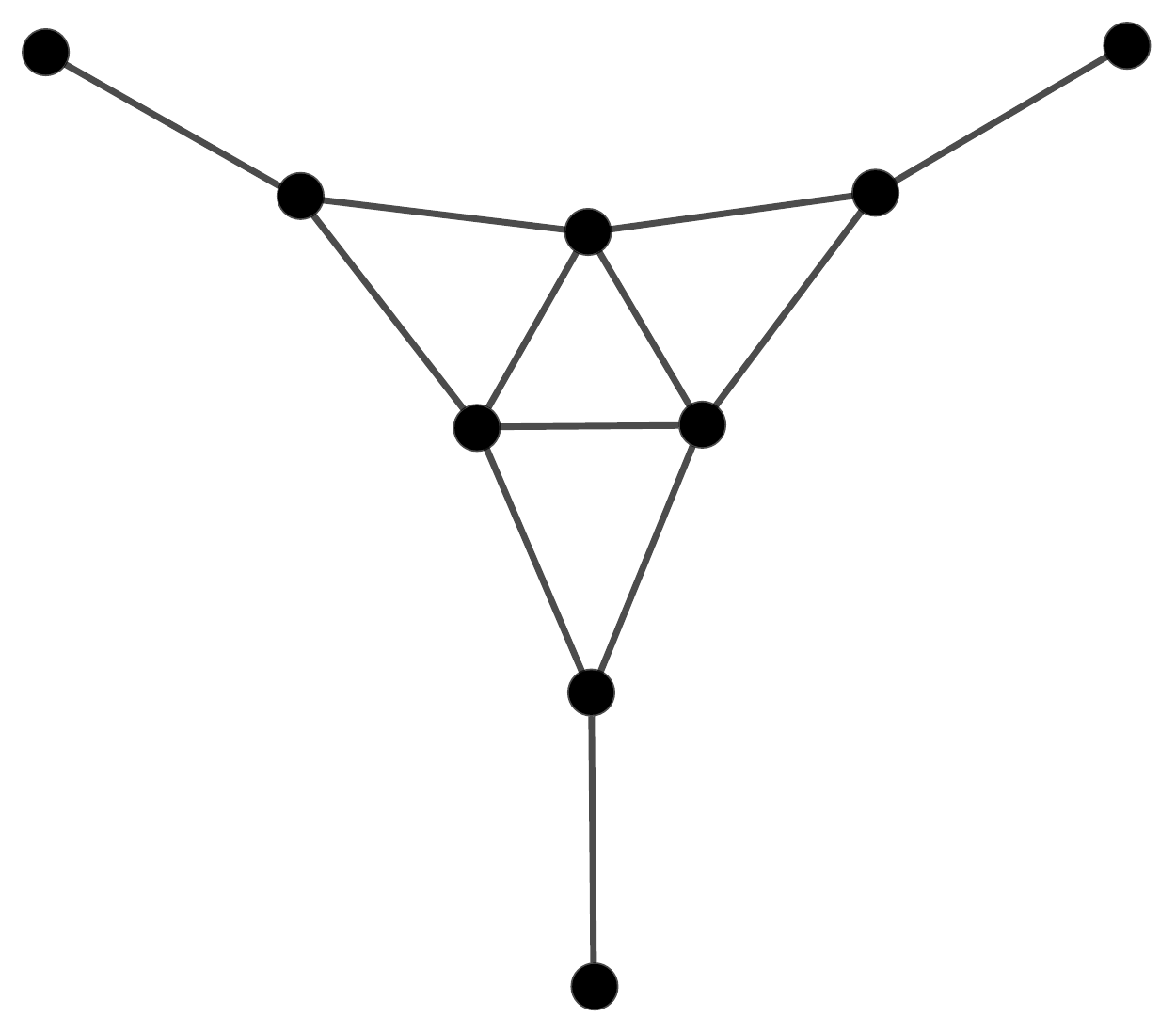}\hspace{1cm}
\includegraphics[width=3cm]{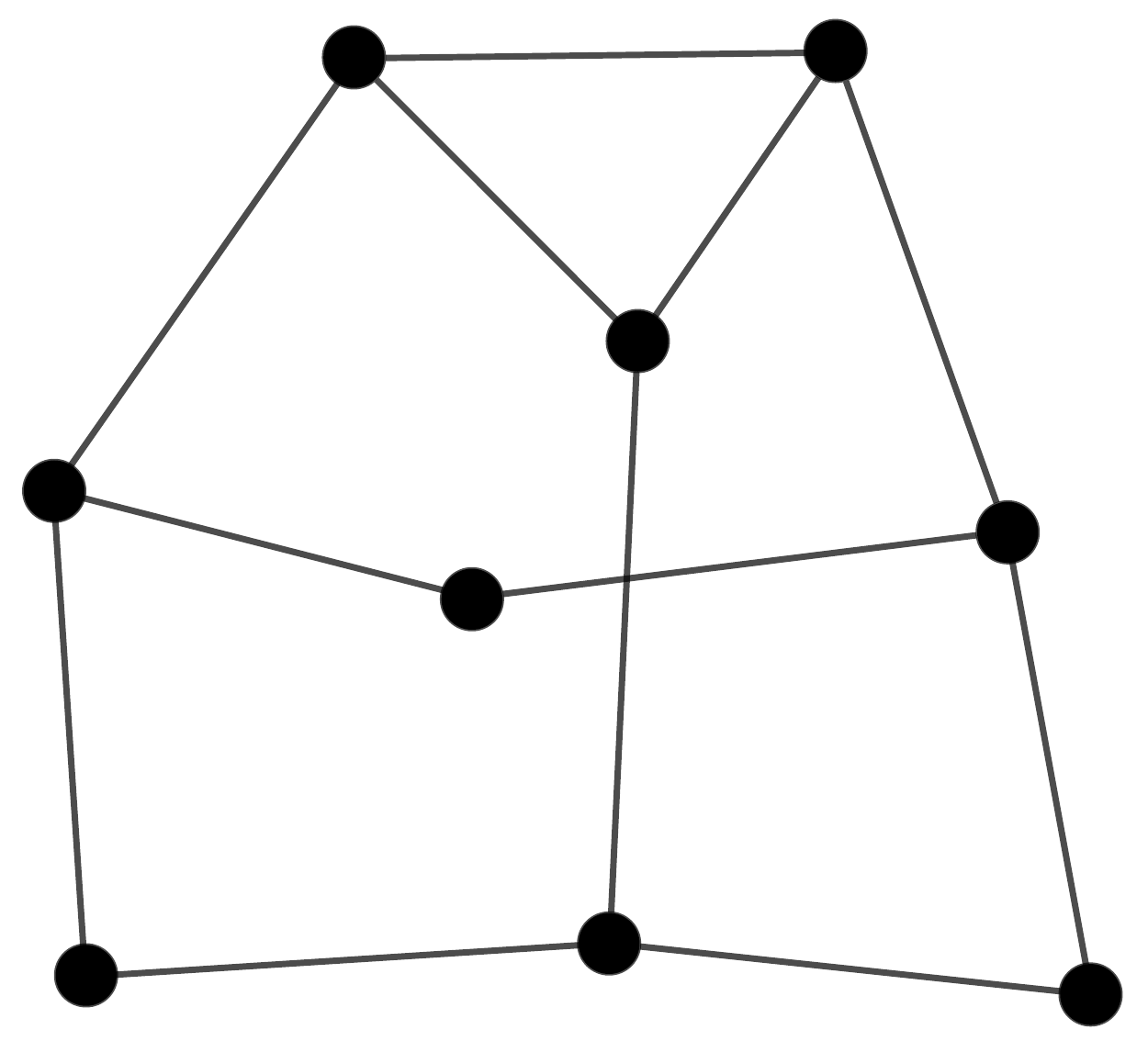}\hspace{1cm}
\\ \quad \\
\includegraphics[width=3cm]{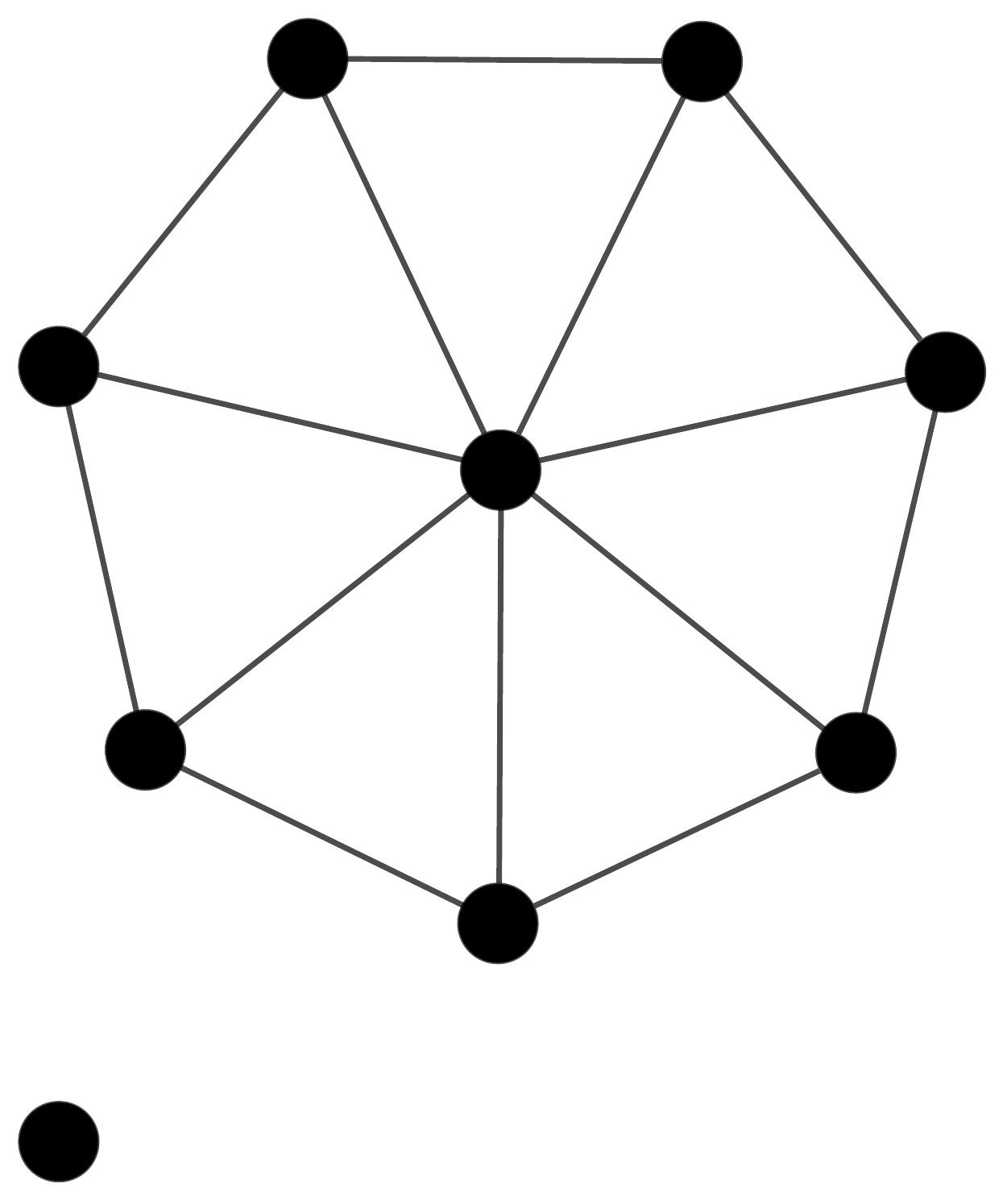}\hspace{1cm}
\includegraphics[width=3cm]{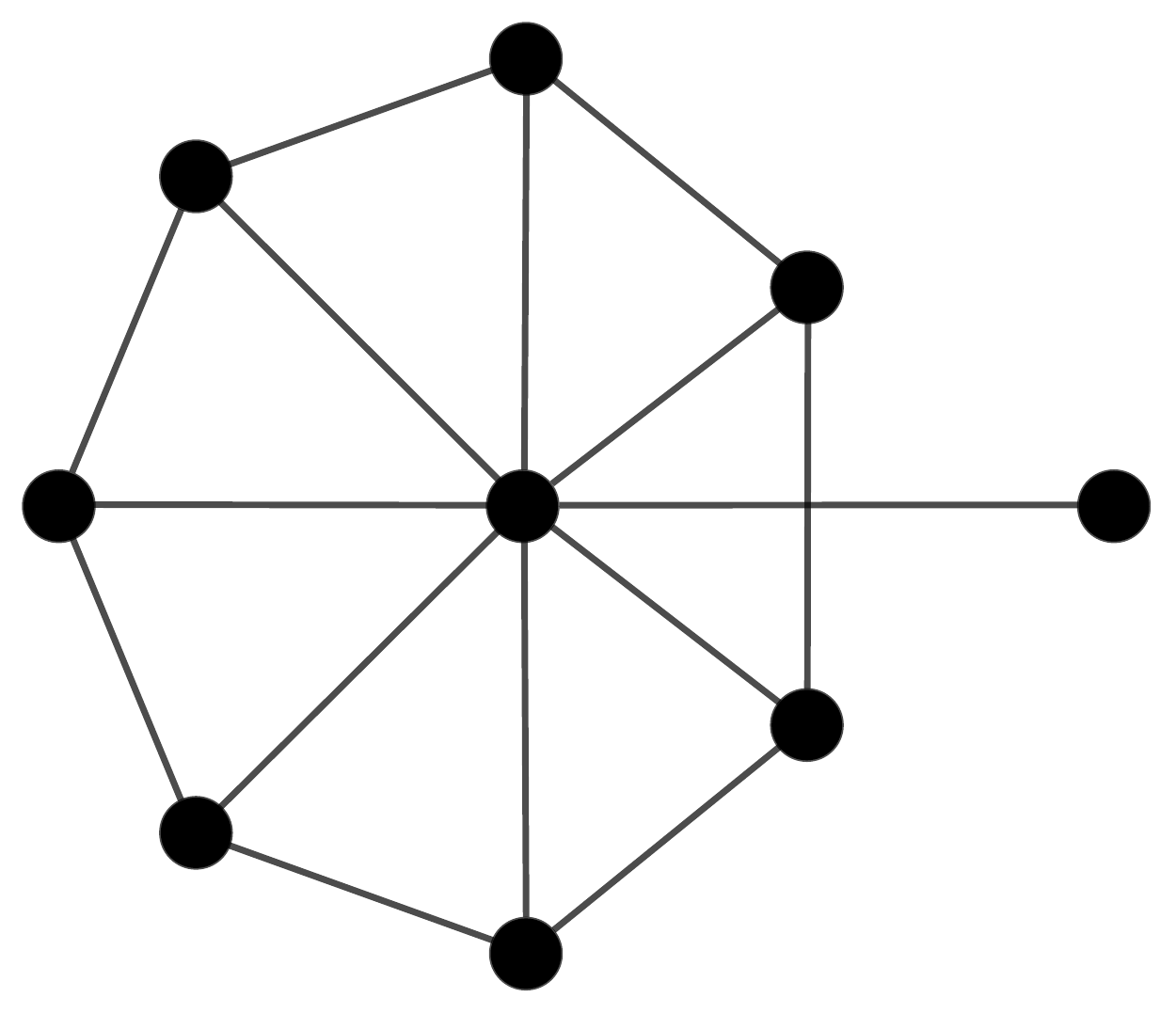}\hspace{1cm}
\includegraphics[width=3cm]{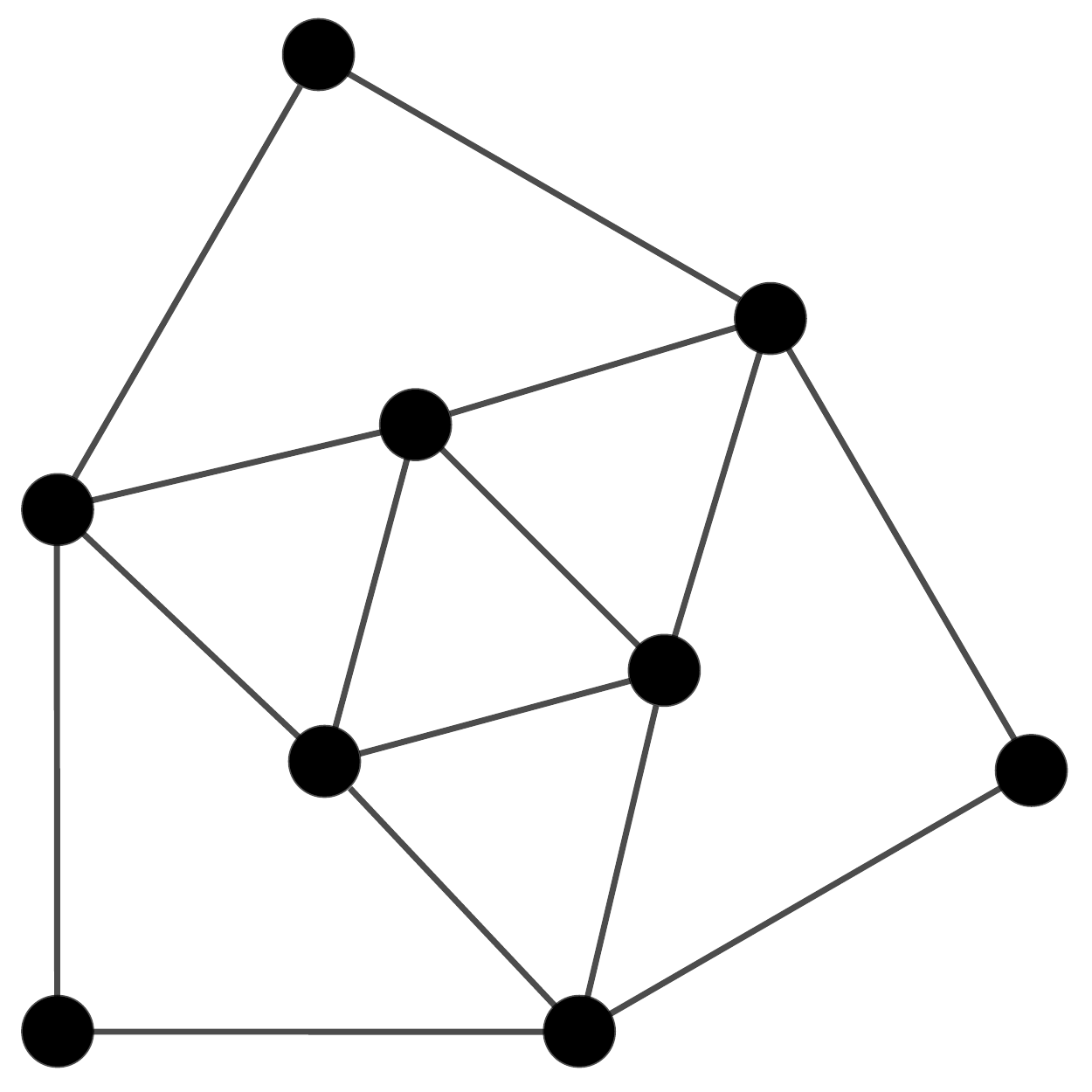}\hspace{1cm}
\includegraphics[width=3cm]{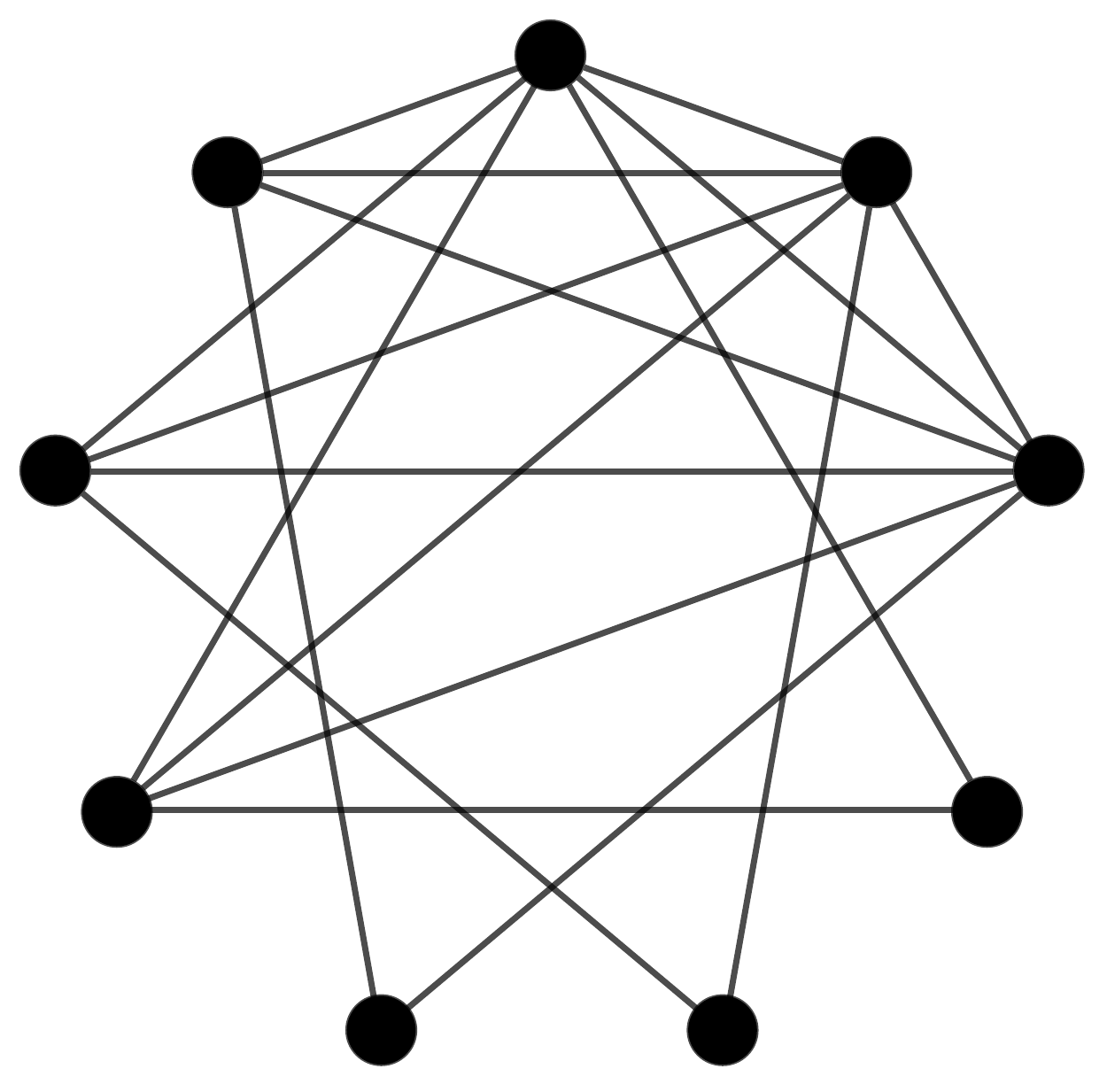}\hspace{1cm}
\\ \quad \\
\includegraphics[width=3cm]{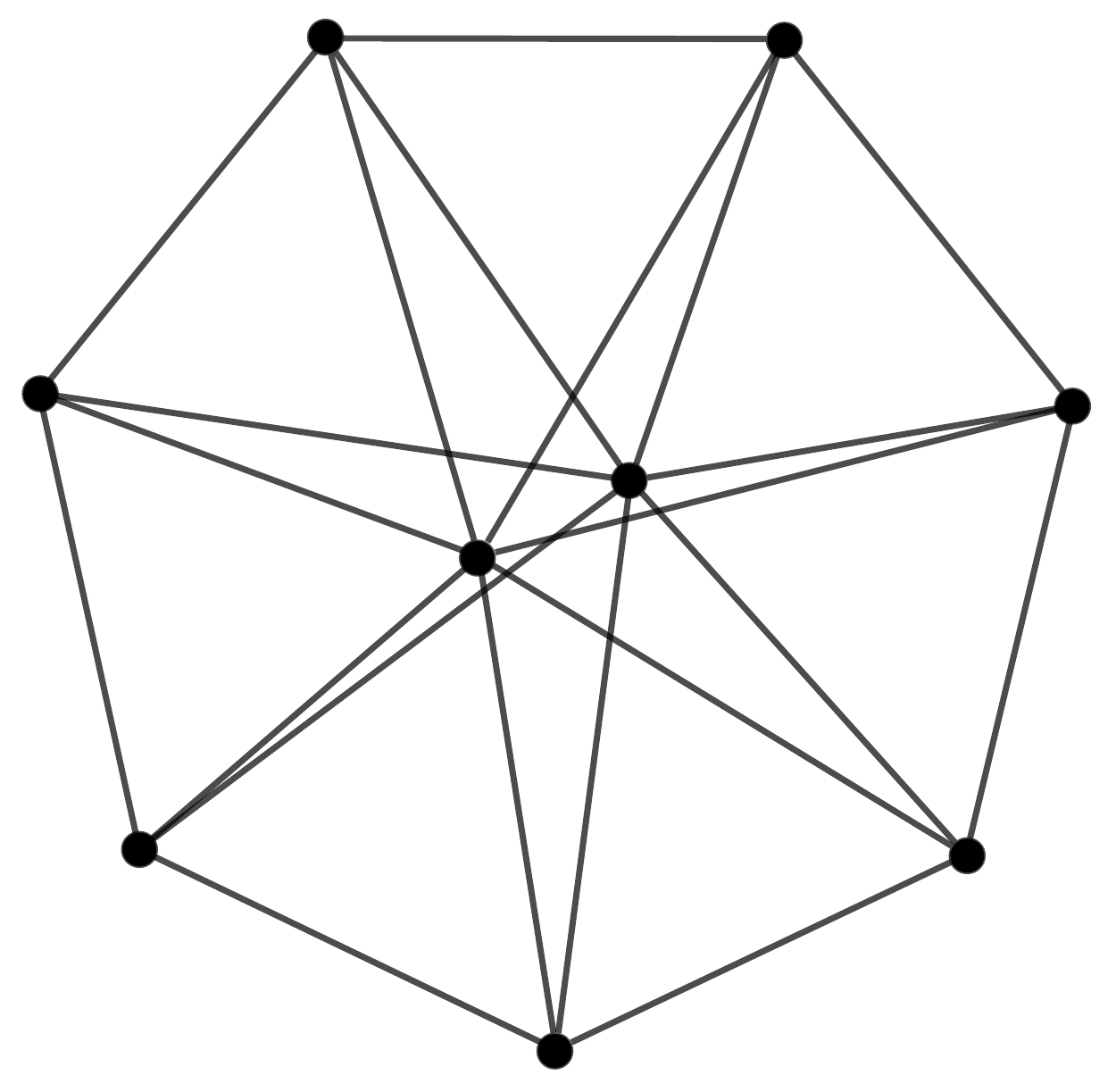}\hspace{1cm}
\includegraphics[width=3cm]{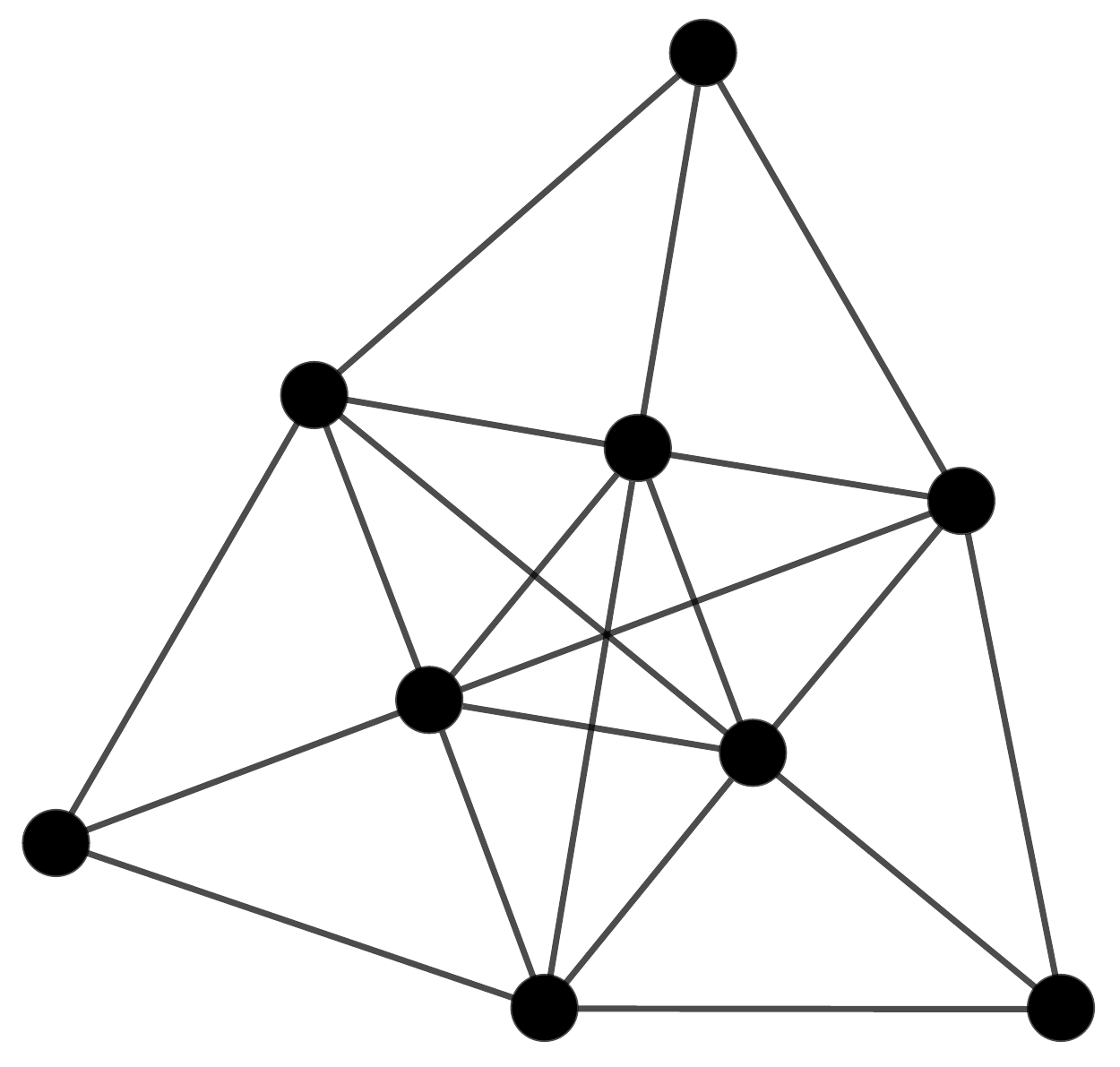}\hspace{1cm}
\\ \quad \\
For these 10 graphs, the distances are $a=1$ and $b=\lambda/(1+\lambda)$, where $\lambda$ is the second-smallest root of $x^3+x^2-2x-1$. 
\\ \quad \\
\includegraphics[width=3cm]{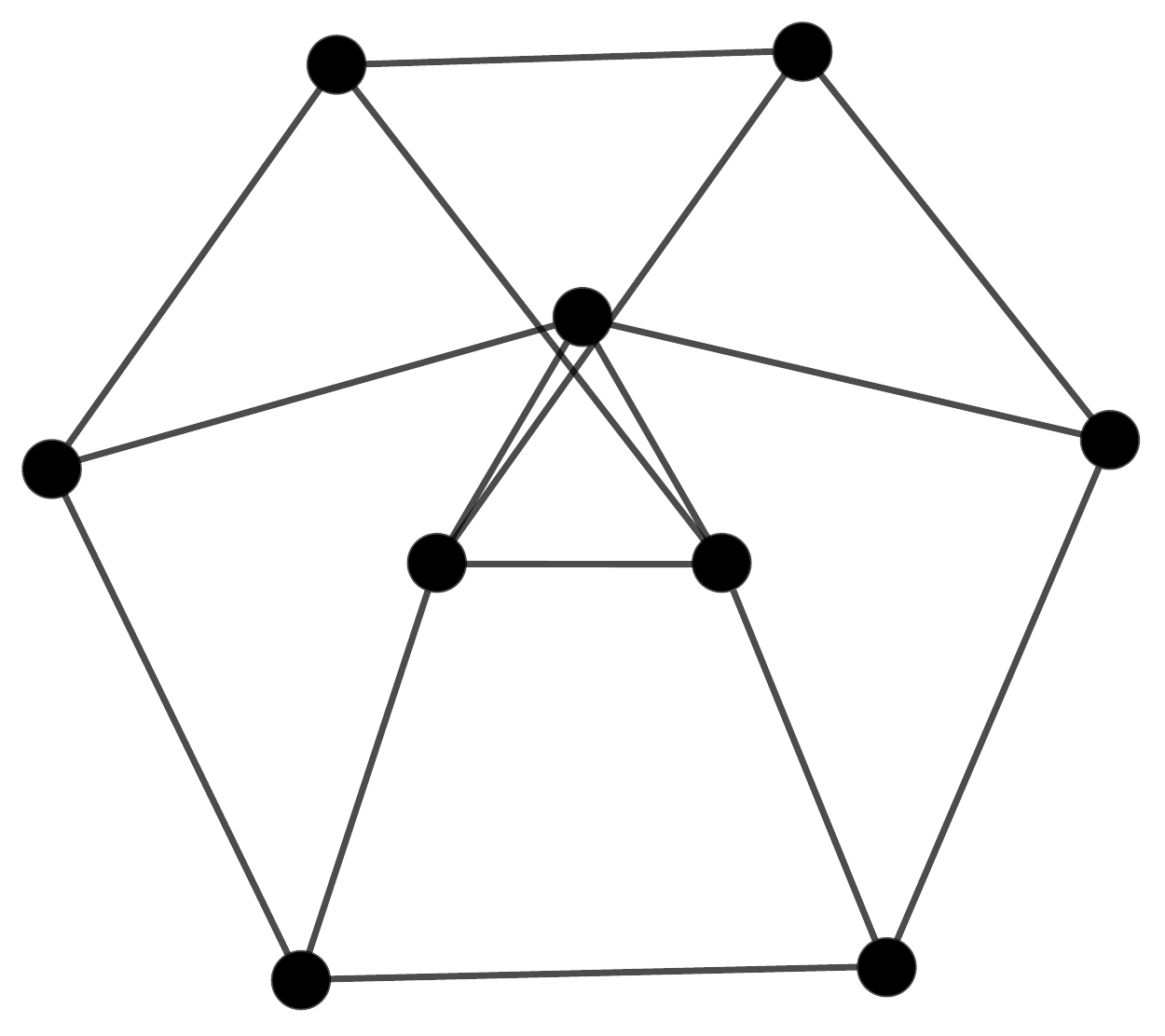}\hspace{1cm}
\includegraphics[width=3cm]{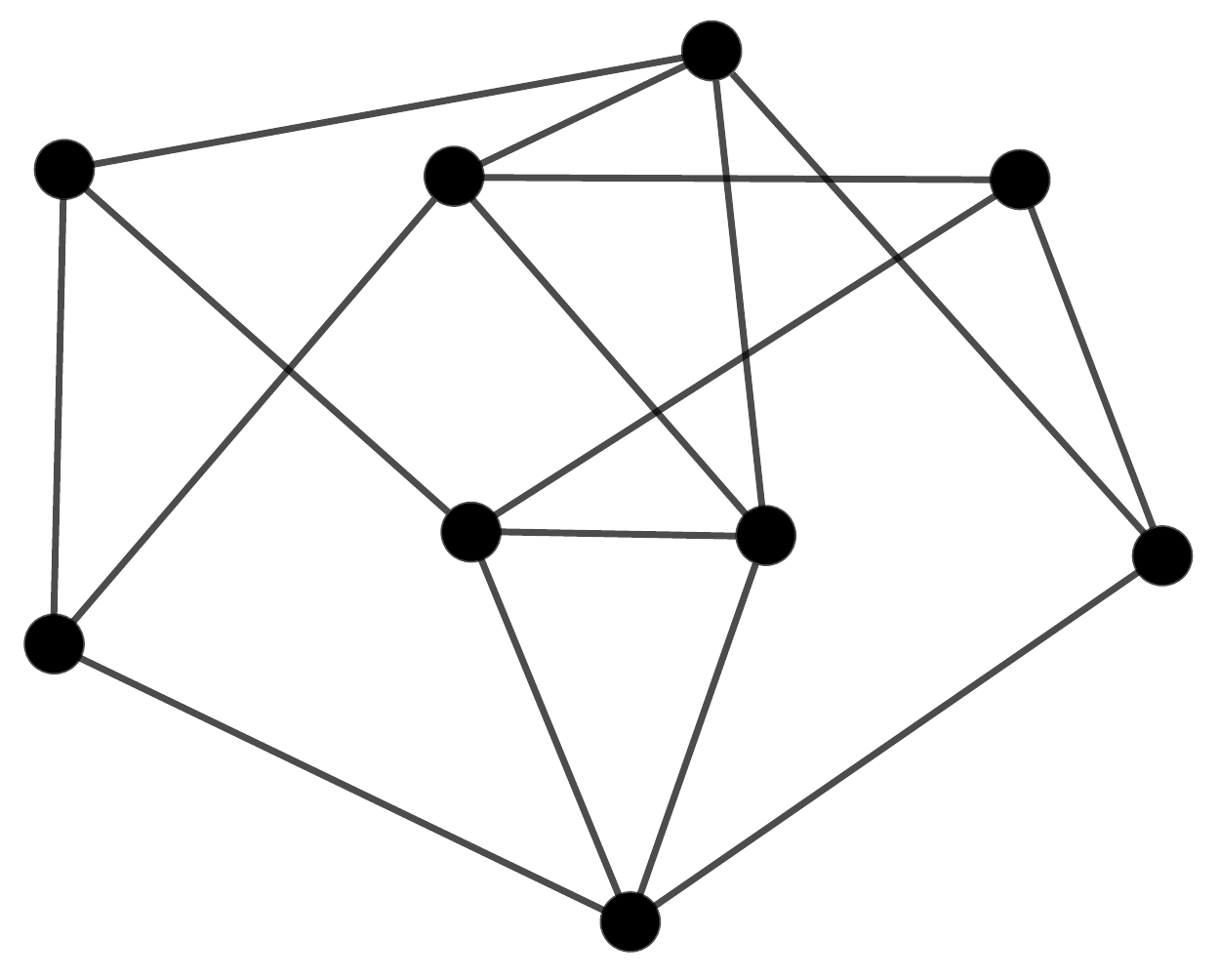}\hspace{1cm}
\\ \quad \\
For these 2 graphs, the distances are $a=1$ and $b=\lambda/(1+\lambda)$, where $\lambda=-1+\sqrt{2}$.  
\\ \quad \\
\includegraphics[width=3cm]{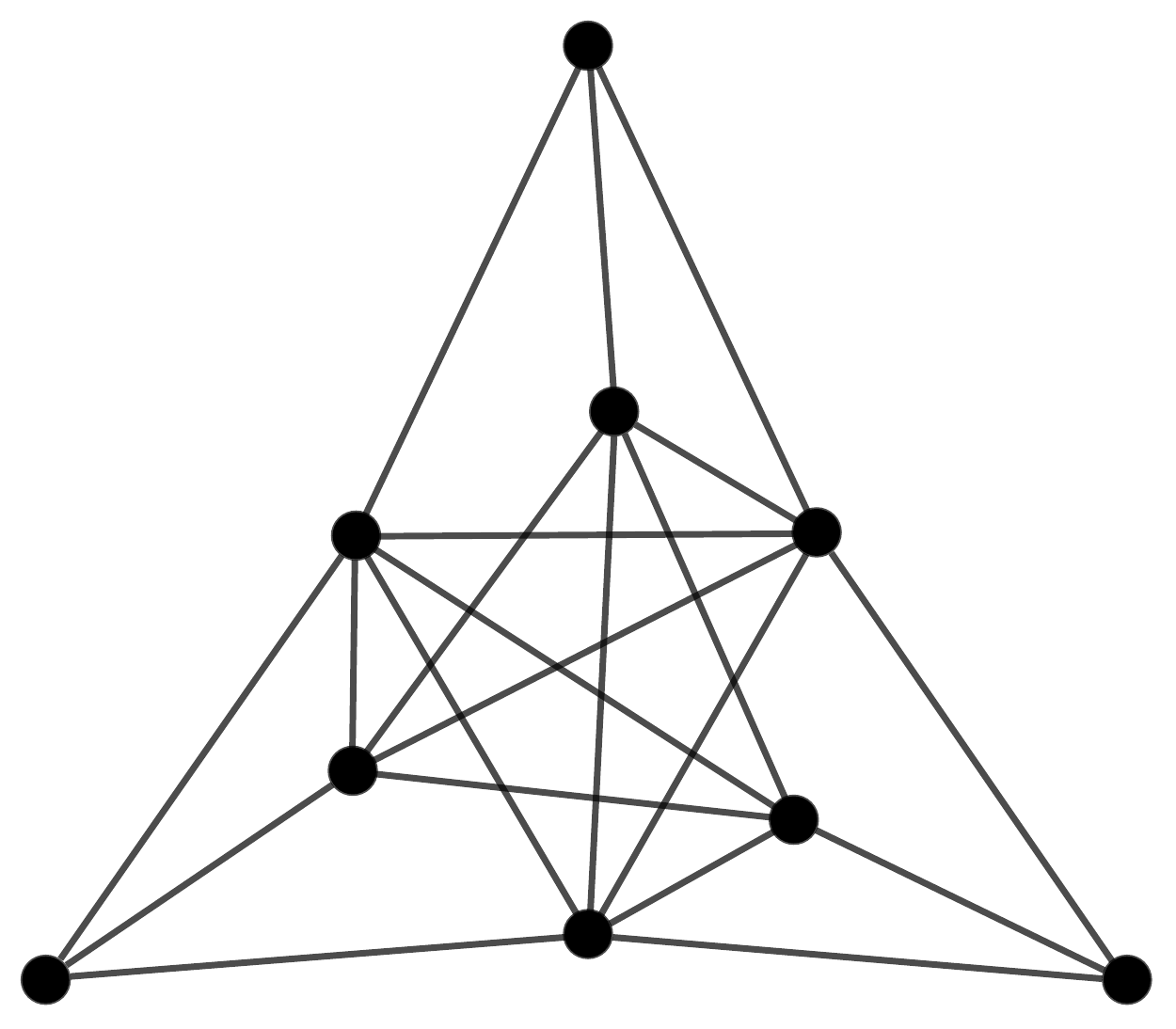}\hspace{1cm}
\includegraphics[width=3cm]{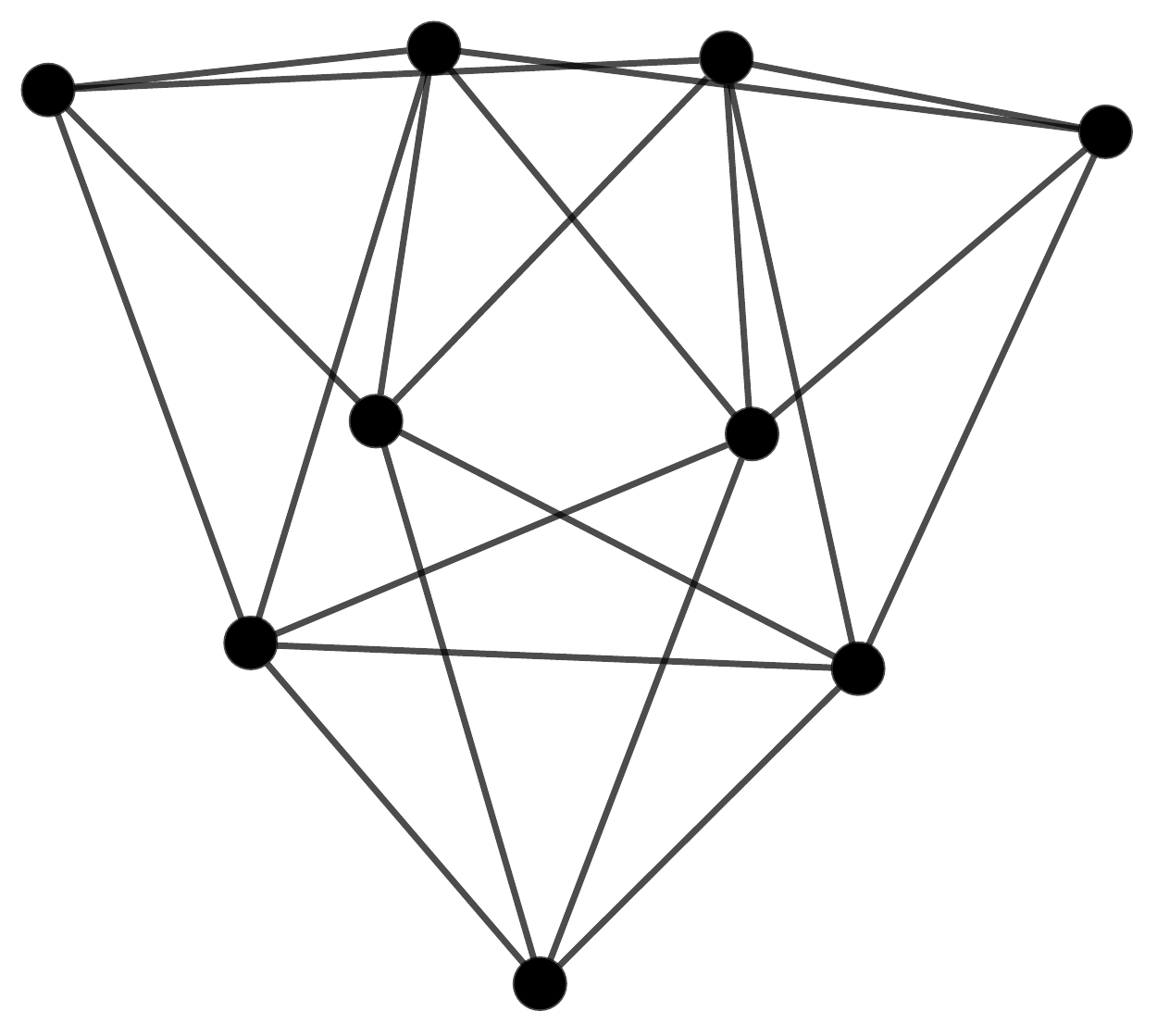}\hspace{1cm}
\\ \quad \\
For these 2 graphs, the distances are $a=1$ and $b=\lambda/(1+\lambda)$, where $\lambda=(-3+\sqrt{5})/2$. 

\subsection{$(p,q)=(4,1)$, 2 graphs on 10 vertices}
\includegraphics[width=3cm]{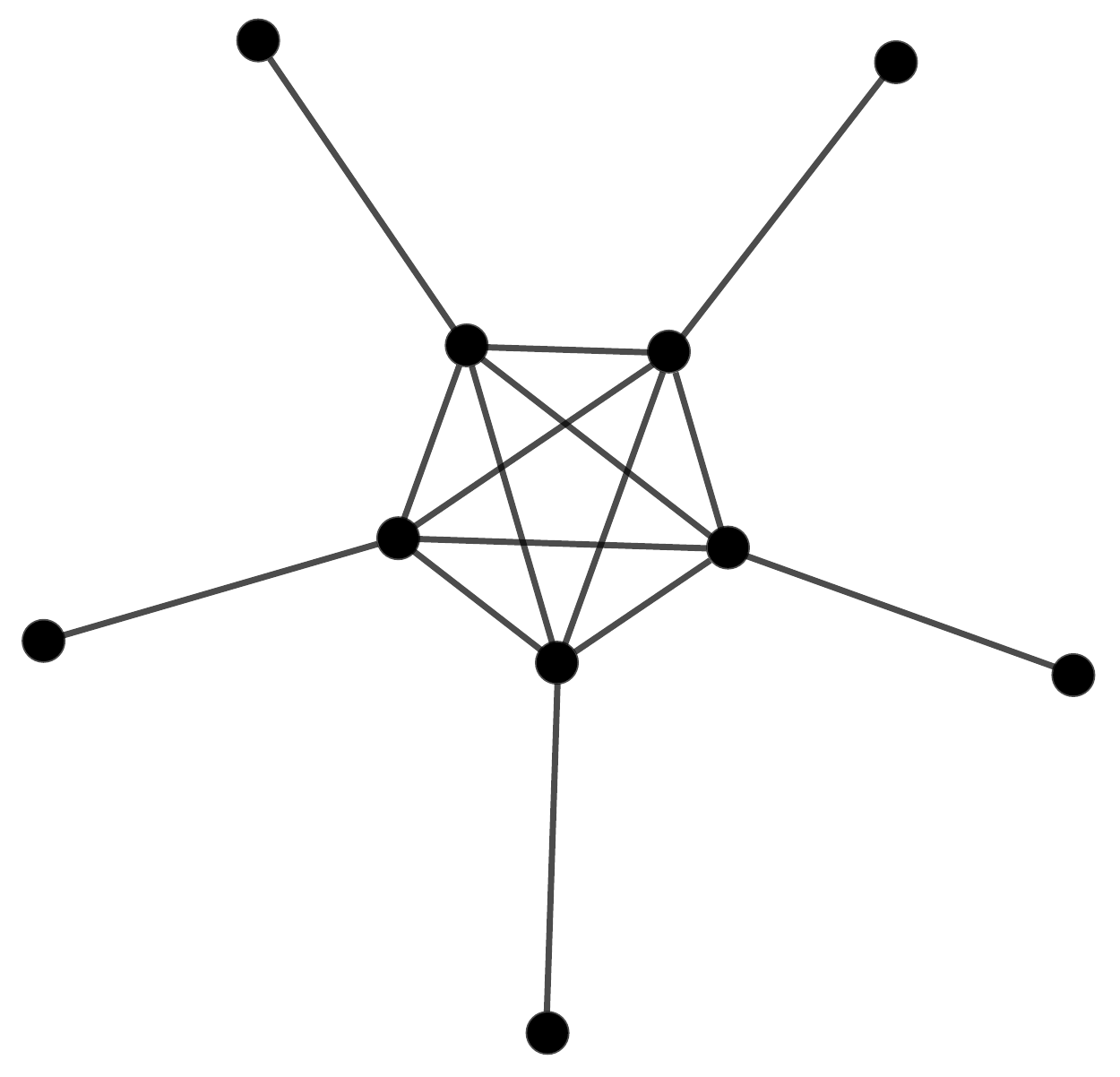}\hspace{1cm}
\includegraphics[width=1.5cm]{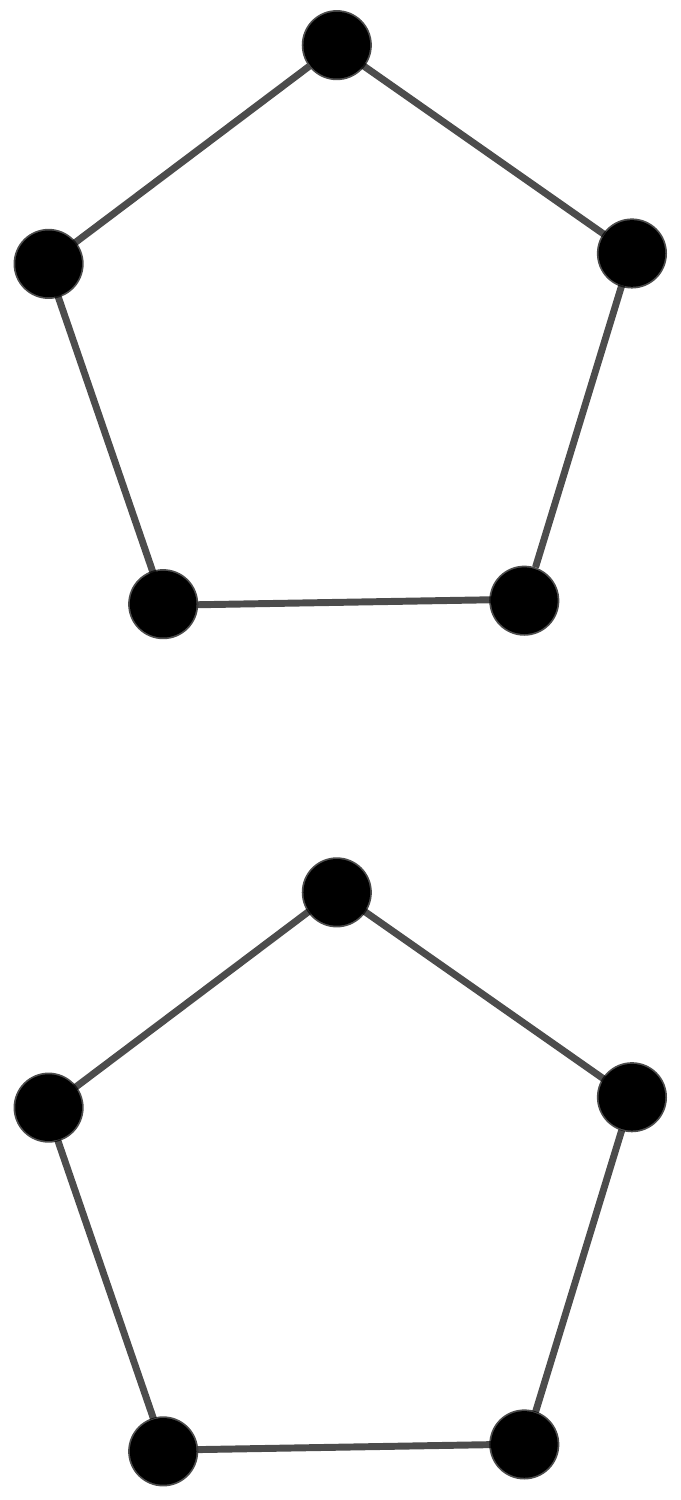}\hspace{1cm}
\\ \quad \\
For these 2 graphs, the distances are $a=1$ and $b=\lambda/(1+\lambda)$, where $\lambda=(-1+\sqrt{5})/2$. 

\subsection{$(p,q)=(4,2)$, 3 graphs on 10 vertices}
\includegraphics[width=4.5cm]{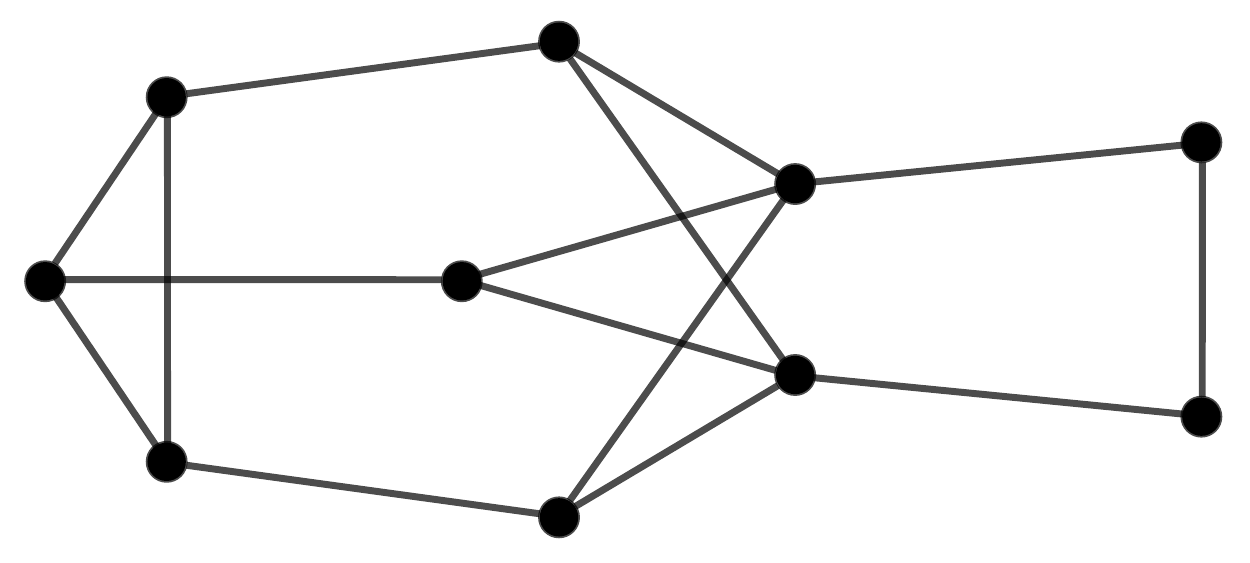}\hspace{1cm}
\\ \quad \\
For this graph, the distances are $a=1$ and $b=\lambda/(1+\lambda)$, where $\lambda=(-1+\sqrt{5})/2$. 
\\ \quad \\
\includegraphics[width=3cm]{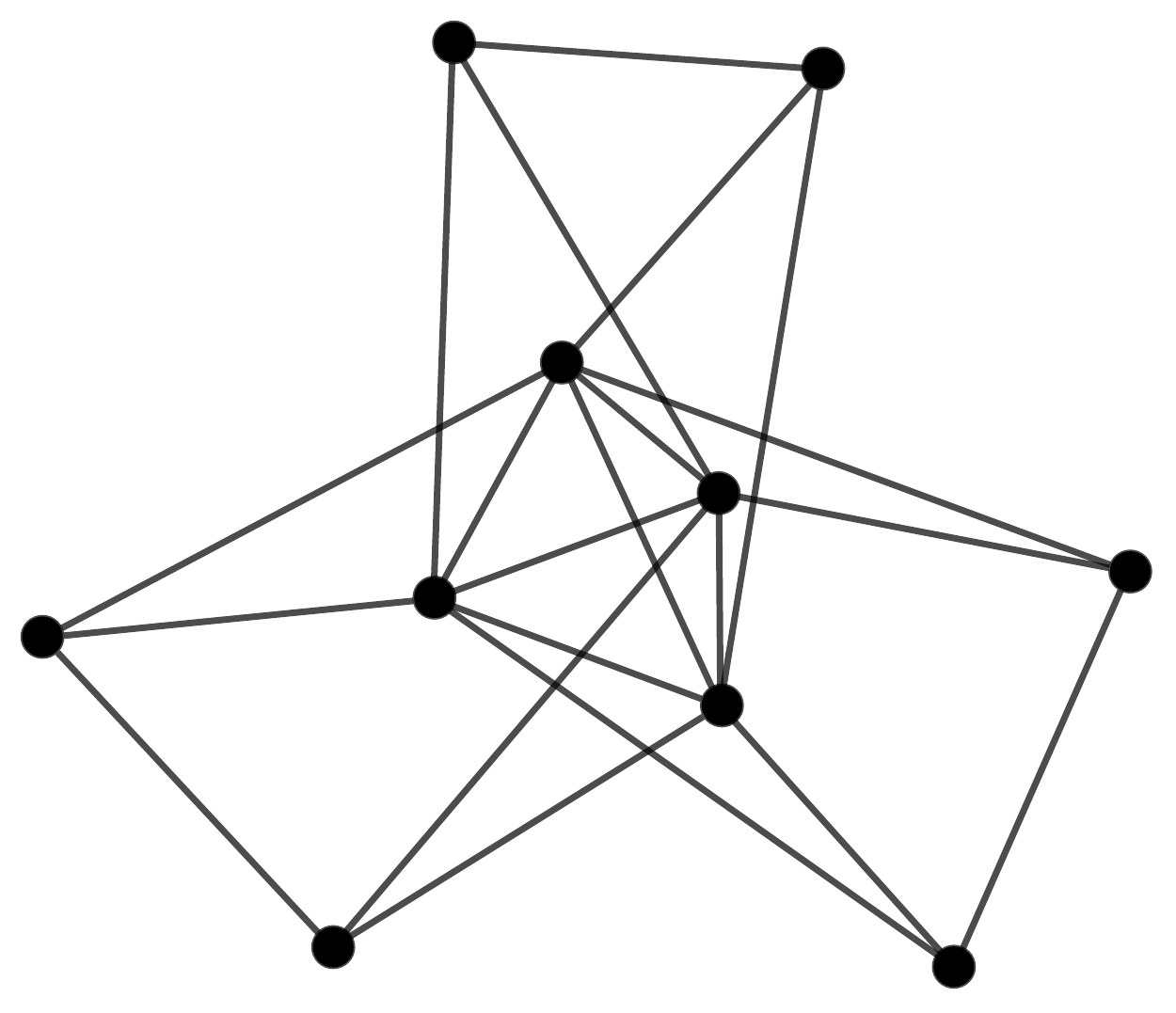}\hspace{1cm}
\includegraphics[width=3cm]{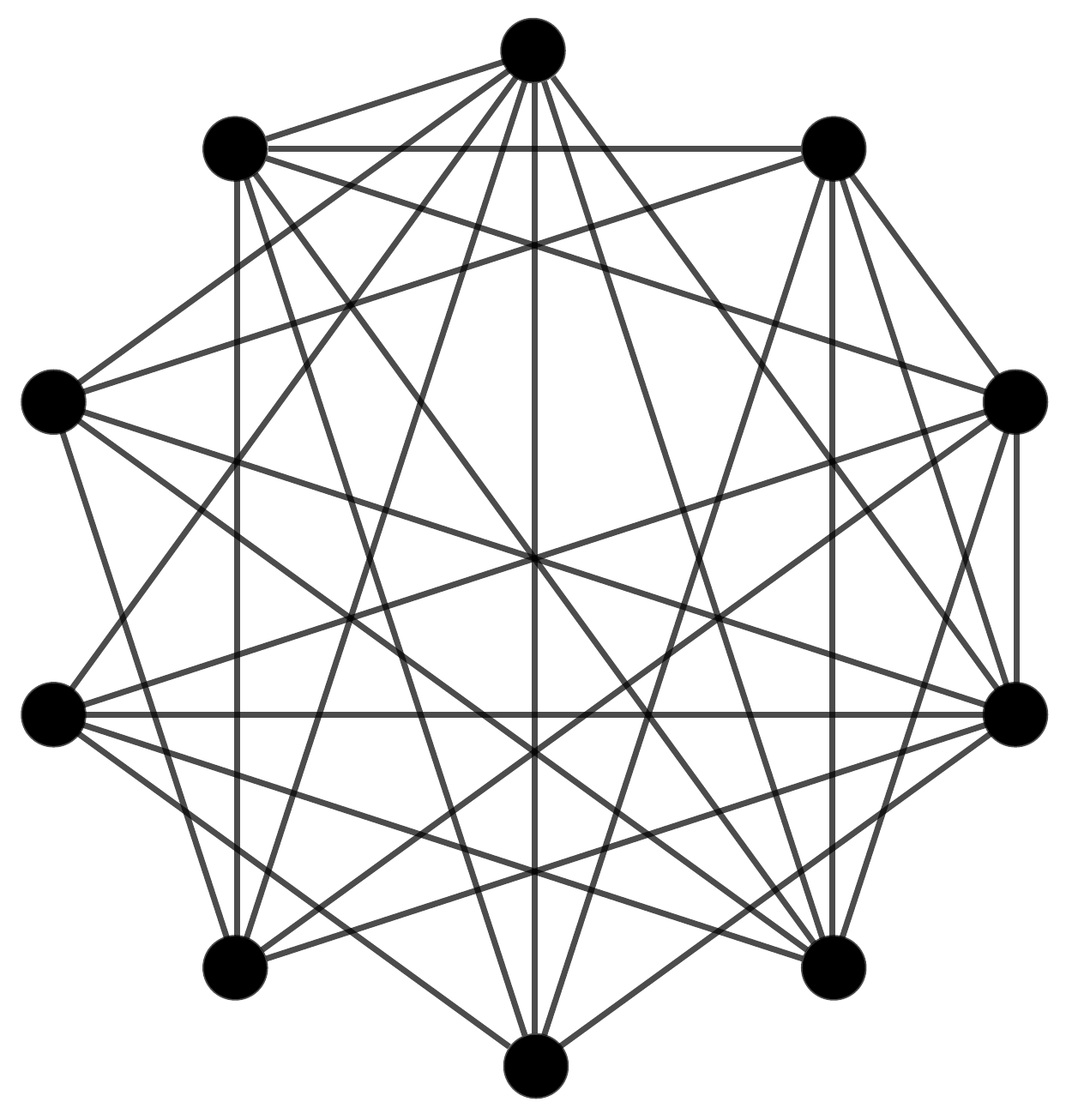}\hspace{1cm}
\\ \quad \\
For these 2 graphs, the distances are $a=1$ and $b=\lambda/(1+\lambda)$, where $\lambda=-1+\sqrt{2}$. 

\subsection{$(p,q)=(4,3)$, 1 graph on 12 vertices} %(should write canonical form?)
\[
\left(
\begin{array}{cccccccccccc}
 0 & 0 & 0 & 0 & 0 & 1 & 1 & 1 & 0 & 1 & 1 & 1 \\
 0 & 0 & 0 & 0 & 0 & 1 & 0 & 0 & 1 & 1 & 1 & 0 \\
 0 & 0 & 0 & 0 & 0 & 0 & 1 & 1 & 1 & 0 & 1 & 0 \\
 0 & 0 & 0 & 0 & 0 & 0 & 0 & 1 & 1 & 1 & 0 & 1 \\
 0 & 0 & 0 & 0 & 0 & 0 & 0 & 0 & 0 & 0 & 0 & 0 \\
 1 & 1 & 0 & 0 & 0 & 0 & 0 & 1 & 1 & 0 & 0 & 0 \\
 1 & 0 & 1 & 0 & 0 & 0 & 0 & 0 & 1 & 1 & 0 & 0 \\
 1 & 0 & 1 & 1 & 0 & 1 & 0 & 0 & 0 & 1 & 1 & 0 \\
 0 & 1 & 1 & 1 & 0 & 1 & 1 & 0 & 0 & 0 & 0 & 1 \\
 1 & 1 & 0 & 1 & 0 & 0 & 1 & 1 & 0 & 0 & 1 & 0 \\
 1 & 1 & 1 & 0 & 0 & 0 & 0 & 1 & 0 & 1 & 0 & 1 \\
 1 & 0 & 0 & 1 & 0 & 0 & 0 & 0 & 1 & 0 & 1 & 0 \\
\end{array}
\right)
\]
For this graph, the distances are $a=1$ and $b=\lambda/(1+\lambda)$, where $\lambda=-1+\sqrt{2}$. 

\subsection{$(p,q)=(5,1)$, 3 graphs on 13 vertices}
\[ 
\left(
\begin{array}{ccccccccccccc}
 0 & 0 & 0 & 0 & 0 & 1 & 1 & 0 & 0 & 0 & 1 & 0 & 1 \\
 0 & 0 & 0 & 0 & 0 & 1 & 0 & 0 & 1 & 1 & 0 & 1 & 0 \\
 0 & 0 & 0 & 0 & 0 & 0 & 1 & 1 & 1 & 1 & 0 & 0 & 1 \\
 0 & 0 & 0 & 0 & 0 & 0 & 1 & 1 & 0 & 1 & 1 & 1 & 0 \\
 0 & 0 & 0 & 0 & 0 & 0 & 0 & 1 & 1 & 0 & 1 & 1 & 1 \\
 1 & 1 & 0 & 0 & 0 & 0 & 0 & 1 & 0 & 0 & 0 & 0 & 0 \\
 1 & 0 & 1 & 1 & 0 & 0 & 0 & 0 & 1 & 0 & 0 & 1 & 0 \\
 0 & 0 & 1 & 1 & 1 & 1 & 0 & 0 & 0 & 0 & 0 & 0 & 0 \\
 0 & 1 & 1 & 0 & 1 & 0 & 1 & 0 & 0 & 0 & 1 & 0 & 0 \\
 0 & 1 & 1 & 1 & 0 & 0 & 0 & 0 & 0 & 0 & 1 & 0 & 1 \\
 1 & 0 & 0 & 1 & 1 & 0 & 0 & 0 & 1 & 1 & 0 & 0 & 0 \\
 0 & 1 & 0 & 1 & 1 & 0 & 1 & 0 & 0 & 0 & 0 & 0 & 1 \\
 1 & 0 & 1 & 0 & 1 & 0 & 0 & 0 & 0 & 1 & 0 & 1 & 0 \\
\end{array} 
\right),
\left(
\begin{array}{ccccccccccccc}
 0 & 0 & 0 & 0 & 1 & 0 & 0 & 1 & 0 & 1 & 0 & 1 & 0 \\
 0 & 0 & 0 & 0 & 0 & 1 & 0 & 1 & 0 & 0 & 0 & 1 & 1 \\
 0 & 0 & 0 & 0 & 0 & 0 & 1 & 1 & 0 & 1 & 1 & 0 & 1 \\
 0 & 0 & 0 & 0 & 0 & 0 & 1 & 0 & 1 & 1 & 0 & 1 & 1 \\
 1 & 0 & 0 & 0 & 0 & 0 & 0 & 0 & 1 & 0 & 1 & 0 & 1 \\
 0 & 1 & 0 & 0 & 0 & 0 & 0 & 0 & 1 & 1 & 1 & 0 & 0 \\
 0 & 0 & 1 & 1 & 0 & 0 & 0 & 1 & 1 & 0 & 1 & 1 & 0 \\
 1 & 1 & 1 & 0 & 0 & 0 & 1 & 0 & 1 & 0 & 0 & 0 & 0 \\
 0 & 0 & 0 & 1 & 1 & 1 & 1 & 1 & 0 & 0 & 0 & 0 & 0 \\
 1 & 0 & 1 & 1 & 0 & 1 & 0 & 0 & 0 & 0 & 0 & 0 & 0 \\
 0 & 0 & 1 & 0 & 1 & 1 & 1 & 0 & 0 & 0 & 0 & 1 & 0 \\
 1 & 1 & 0 & 1 & 0 & 0 & 1 & 0 & 0 & 0 & 1 & 0 & 0 \\
 0 & 1 & 1 & 1 & 1 & 0 & 0 & 0 & 0 & 0 & 0 & 0 & 0 \\
\end{array}
\right),
\]
\[
\left(
\begin{array}{ccccccccccccc}
 0 & 0 & 0 & 0 & 0 & 1 & 1 & 0 & 0 & 0 & 0 & 1 & 1 \\
 0 & 0 & 0 & 0 & 0 & 1 & 0 & 0 & 0 & 1 & 1 & 0 & 0 \\
 0 & 0 & 0 & 0 & 0 & 0 & 1 & 1 & 0 & 0 & 1 & 0 & 0 \\
 0 & 0 & 0 & 0 & 0 & 0 & 0 & 1 & 1 & 1 & 0 & 1 & 0 \\
 0 & 0 & 0 & 0 & 0 & 0 & 0 & 0 & 1 & 0 & 1 & 0 & 1 \\
 1 & 1 & 0 & 0 & 0 & 0 & 0 & 1 & 1 & 0 & 0 & 0 & 0 \\
 1 & 0 & 1 & 0 & 0 & 0 & 0 & 0 & 1 & 1 & 0 & 0 & 0 \\
 0 & 0 & 1 & 1 & 0 & 1 & 0 & 0 & 0 & 0 & 0 & 0 & 1 \\
 0 & 0 & 0 & 1 & 1 & 1 & 1 & 0 & 0 & 0 & 0 & 0 & 0 \\
 0 & 1 & 0 & 1 & 0 & 0 & 1 & 0 & 0 & 0 & 0 & 0 & 1 \\
 0 & 1 & 1 & 0 & 1 & 0 & 0 & 0 & 0 & 0 & 0 & 1 & 0 \\
 1 & 0 & 0 & 1 & 0 & 0 & 0 & 0 & 0 & 0 & 1 & 0 & 0 \\
 1 & 0 & 0 & 0 & 1 & 0 & 0 & 1 & 0 & 1 & 0 & 0 & 0 \\
\end{array}
\right).
\]
For these 3 graphs, the distances $a=1$ and $b=1/2$. 

\subsection{$(p,q)=(5,2)$, 1 graph on 13 vertices}
\[ 
\left(
\begin{array}{ccccccccccccc}
 0 & 0 & 0 & 1 & 0 & 1 & 0 & 1 & 0 & 1 & 1 & 1 & 0 \\
 0 & 0 & 0 & 0 & 1 & 0 & 1 & 1 & 0 & 0 & 0 & 0 & 1 \\
 0 & 0 & 0 & 0 & 0 & 1 & 1 & 1 & 1 & 1 & 1 & 1 & 1 \\
 1 & 0 & 0 & 0 & 0 & 1 & 0 & 0 & 1 & 1 & 0 & 1 & 1 \\
 0 & 1 & 0 & 0 & 0 & 0 & 1 & 0 & 1 & 0 & 1 & 0 & 0 \\
 1 & 0 & 1 & 1 & 0 & 0 & 0 & 1 & 1 & 0 & 0 & 1 & 0 \\
 0 & 1 & 1 & 0 & 1 & 0 & 0 & 0 & 0 & 1 & 0 & 0 & 0 \\
 1 & 1 & 1 & 0 & 0 & 1 & 0 & 0 & 1 & 1 & 1 & 0 & 1 \\
 0 & 0 & 1 & 1 & 1 & 1 & 0 & 1 & 0 & 1 & 1 & 0 & 1 \\
 1 & 0 & 1 & 1 & 0 & 0 & 1 & 1 & 1 & 0 & 1 & 0 & 1 \\
 1 & 0 & 1 & 0 & 1 & 0 & 0 & 1 & 1 & 1 & 0 & 1 & 1 \\
 1 & 0 & 1 & 1 & 0 & 1 & 0 & 0 & 0 & 0 & 1 & 0 & 1 \\
 0 & 1 & 1 & 1 & 0 & 0 & 0 & 1 & 1 & 1 & 1 & 1 & 0 \\
\end{array}
\right).
\]
For this graph, the distances are $a=1$ and $b=\lambda/(1+\lambda)$, where $\lambda=-1+\sqrt{2}$. 
\subsection{$(p,q)=(6,1)$, 1 graph on 22 vertices} \label{sec:A12}
There exists the largest proper 2-indefinite-distance set in $\R^{6,1}$ with 22 points. 
The graph $G=(V,E)$ of this configuration is as follows. 
Let $V=V_0\cup V_1 \cup V_2$ be the vertex set, where
\begin{align*}
    V_0&=\{0 \}, \\
V_1&=\{1,2,3,4,5,6\}, \\
V_2&=\{\{i,j\} \mid 1\le i<j\le 6\}. 
\end{align*}
The edge set is $E=E_0 \cup E_1 \cup E_2$, where
\begin{align*}
    E_0&=\{\{ v,u \} \mid v \in V_0, u \in V_1 \}, \\
    E_1&=\{\{u,S\} \mid u \in V_1, S \in V_2, u \in S\},\\
    E_2&=\{\{S,T\} \mid S,T \in V_2, S\cap T =\emptyset \}.
\end{align*}
Note that the induced subgraph of $V_2$ is the complement of the Johnson graph, and the induced subgraph of $V_1$ are isolated vertices. 
The coordinates of a representation of this graph are 
\begin{align*}
    &v_0=\frac{2}{3}\sum_{i=1}^6  \ev_i, \\
    &v_{1,i}=-\ev_i+\ev_7\qquad  (0\leq i \leq 6),\\
    &v_{2,\{i,j\}}=\ev_i+\ev_j\qquad  (0\leq i<j\leq 6),
\end{align*}
where ${\ev}_i$ be the vector with 1 in $i$-th component and 0 in all others. 
The distances are $a=4$ and $b=2$.

\section{Figures or matrices of largest proper spherical 2-indefinite-distance sets} \label{sec:B}
For $(p,q)=(1,1)$, by applying Descartes’ rule of signs to $-\M_i$ in \eqref{eq:(1,1)}, we can prove that $\M_1$ is of Type (3), and $\M_2$ is of Type (2) and spherical. 
For $(p,q)=(2,1)$, all the largest proper 2-indefinite-distance sets of 5 points are Type (1). 
There are infinitely many spherical proper 2-indefinite-distance sets of 4 points in $\R^{2,1}$. For instance, the graph of order 4 with only 1 edge can be represented as a spherical proper 2-indefinite-distance set in $\R^{2,1}$ with $a=1$ and $0<b<1/4$. These are not the only examples of proper 2-indefinite-distance sets of order 4 in $S_{2,1}$. 
The largest 2-indefinite-distance set for $(p,q)=(4,3)$ is not spherical in the embedding dimension, and it is Type (1). 
The following graph
\begin{center}
    \includegraphics[width=3cm]{41no1.pdf}\hspace{1cm}
\end{center}
for $(p,q)=(4,1)$ is not spherical in the embedding dimension, and it is Type (3).  The remaining 2-indefinite-distance sets obtained in the previous section \ref{sec:A} are all spherical in the embedding dimension. Therefore we obtained Table~\ref{tab:2} on the cardinalities of the largest proper spherical 2-indefinite-distance sets. 
For $(p,q)=(4,3)$, there are 3 largest spherical proper 2-indefinite-distance sets, which are the following graphs. 
\[\left(
\begin{array}{ccccccccccc}
 0 & 0 & 0 & 0 & 1 & 0 & 0 & 1 & 0 & 0 & 1 \\
 0 & 0 & 0 & 0 & 0 & 1 & 0 & 1 & 0 & 1 & 0 \\
 0 & 0 & 0 & 0 & 0 & 0 & 1 & 1 & 1 & 0 & 0 \\
 0 & 0 & 0 & 0 & 0 & 0 & 0 & 1 & 1 & 1 & 1 \\
 1 & 0 & 0 & 0 & 0 & 0 & 0 & 0 & 1 & 1 & 0 \\
 0 & 1 & 0 & 0 & 0 & 0 & 0 & 0 & 1 & 0 & 1 \\
 0 & 0 & 1 & 0 & 0 & 0 & 0 & 0 & 0 & 1 & 1 \\
 1 & 1 & 1 & 1 & 0 & 0 & 0 & 0 & 1 & 1 & 1 \\
 0 & 0 & 1 & 1 & 1 & 1 & 0 & 1 & 0 & 1 & 1 \\
 0 & 1 & 0 & 1 & 1 & 0 & 1 & 1 & 1 & 0 & 1 \\
 1 & 0 & 0 & 1 & 0 & 1 & 1 & 1 & 1 & 1 & 0 \\
\end{array}
\right),
\left(
\begin{array}{ccccccccccc}
 0 & 0 & 0 & 0 & 1 & 1 & 1 & 1 & 0 & 1 & 1 \\
 0 & 0 & 0 & 0 & 1 & 0 & 1 & 0 & 1 & 1 & 0 \\
 0 & 0 & 0 & 0 & 0 & 1 & 1 & 1 & 1 & 0 & 0 \\
 0 & 0 & 0 & 0 & 0 & 0 & 0 & 1 & 1 & 1 & 1 \\
 1 & 1 & 0 & 0 & 0 & 0 & 0 & 1 & 1 & 0 & 0 \\
 1 & 0 & 1 & 0 & 0 & 0 & 0 & 0 & 1 & 1 & 0 \\
 1 & 1 & 1 & 0 & 0 & 0 & 0 & 1 & 0 & 1 & 1 \\
 1 & 0 & 1 & 1 & 1 & 0 & 1 & 0 & 0 & 1 & 0 \\
 0 & 1 & 1 & 1 & 1 & 1 & 0 & 0 & 0 & 0 & 1 \\
 1 & 1 & 0 & 1 & 0 & 1 & 1 & 1 & 0 & 0 & 0 \\
 1 & 0 & 0 & 1 & 0 & 0 & 1 & 0 & 1 & 0 & 0 \\
\end{array}
\right). \]
For these 2 graphs, the distances are $a=1$ and $b=\lambda/(1+\lambda)$, where $\lambda=-1+\sqrt{2}$. 
\[
\left(
\begin{array}{ccccccccccc}
 0 & 0 & 0 & 1 & 1 & 1 & 1 & 0 & 1 & 0 & 1 \\
 0 & 0 & 0 & 1 & 0 & 1 & 1 & 1 & 1 & 1 & 0 \\
 0 & 0 & 0 & 0 & 1 & 1 & 0 & 1 & 1 & 1 & 1 \\
 1 & 1 & 0 & 0 & 0 & 0 & 1 & 0 & 1 & 1 & 1 \\
 1 & 0 & 1 & 0 & 0 & 0 & 1 & 1 & 1 & 1 & 0 \\
 1 & 1 & 1 & 0 & 0 & 0 & 0 & 1 & 0 & 1 & 1 \\
 1 & 1 & 0 & 1 & 1 & 0 & 0 & 0 & 1 & 0 & 1 \\
 0 & 1 & 1 & 0 & 1 & 1 & 0 & 0 & 0 & 1 & 1 \\
 1 & 1 & 1 & 1 & 1 & 0 & 1 & 0 & 0 & 0 & 0 \\
 0 & 1 & 1 & 1 & 1 & 1 & 0 & 1 & 0 & 0 & 0 \\
 1 & 0 & 1 & 1 & 0 & 1 & 1 & 1 & 0 & 0 & 0 \\
\end{array}
\right).\]
For this graph the distances are $a=1$ and $b=\lambda/(1+\lambda)$, where $\lambda=(-3+\sqrt{5})/2$. 

\end{document}